\newcommand{\one}{{\triangle^{1}}}
\newcommand{\tone}{\mathsf{t}_{\triangle^{1}}}
\newcommand{\aone}{\mathsf{a}_{\triangle^{1}}}
\newcommand{\Z}{\mathbb{Z}}
\renewcommand{\P}{\mathbb{P}}
\newcommand{\E}{\mathbb{E}}
\renewcommand{\t}{\mathbf{t}}
\def\U{\mathbb{U}}
\def\N{\mathbb{N}}
\def\e{{\rm e}}
\def\d{{\rm d}}
\newcommand{\R}{\mathbb{R}}
\newcommand\Es[1]{\mathbb{E}\left[#1\right]}
\renewcommand\Pr[1]{\mathbb{P}\left(#1\right)}
\newcommand \fl[1] {\left\lfloor #1 \right\rfloor}
\newcommand\Esp[1]{\mathbb{E}^{(p)}\left[#1\right]}
\newcommand\Prp[1]{\mathbb{P}^{(p)}\left(#1\right)}
\newcommand\Espb[1]{\mathbb{E}^{(p)}[#1]}
\newtheorem{theorem}{Theorem}[]
\newtheorem{proposition}[theorem]{Proposition}
\newtheorem{lemma}[theorem]{Lemma}
\newtheorem{corollary}[theorem]{Corollary}
\newtheorem{conjecture}[]{Conjecture}
\theoremstyle{definition}
\newtheorem*{remark}{Remark}
\def\llbracket{[\hspace{-.10em} [ }
\def\rrbracket{ ] \hspace{-.10em}]}
\def\build#1_#2^#3{\mathrel{
\mathop{\kern 0pt#1}\limits_{#2}^{#3}}}
\title{\vspace{-2cm}\textsc{Random planar maps  \&  growth-fragmentations} \vspace{0.5cm}}
\date{}
\DeclareSymbolFont{extraup}{U}{zavm}{m}{n}
\DeclareMathSymbol{\varheart}{\mathalpha}{extraup}{86}
\DeclareMathSymbol{\vardiamond}{\mathalpha}{extraup}{87}
\renewcommand*{\@fnsymbol}[1]{\ensuremath{\ifcase#1\or  \vardiamond \or \clubsuit\or \spadesuit\or
   \mathsection\or \mathparagraph\or \|\or **\or \dagger\dagger
   \or \ddagger\ddagger \else\@ctrerr\fi}}
\author{Jean Bertoin \thanks{Universit\"at Z\"urich.\hfill  \texttt{jean.bertoin@math.uzh.ch}}\qquad  Nicolas Curien \thanks{Universit\'e Paris-Sud.\hfill  \texttt{nicolas.curien@gmail.com}}  \qquad Igor Kortchemski\thanks{CNRS \& CMAP, \'Ecole polytechnique. \hfill \texttt{igor.kortchemski@normalesup.org}}}
\begin{document}
\maketitle

\begin{abstract}
We are interested in the cycles obtained by slicing at all heights random Boltzmann triangulations with a simple boundary. We establish a functional invariance principle for the lengths of these cycles, appropriately rescaled, as the size of the boundary grows. The limiting process is described using a self-similar growth-fragmentation process with explicit parameters. To this end, we introduce a branching peeling exploration of Boltzmann triangulations, which allows us to identify a crucial martingale involving the perimeters of cycles at given heights. We also use a recent result concerning self-similar scaling limits of Markov chains on the nonnegative integers. A motivation for this work is to give a new construction of the Brownian map from a growth-fragmentation process.
\end{abstract}

\begin{figure}[!h]
 \begin{center}
 \includegraphics[width=14cm,height=9cm]{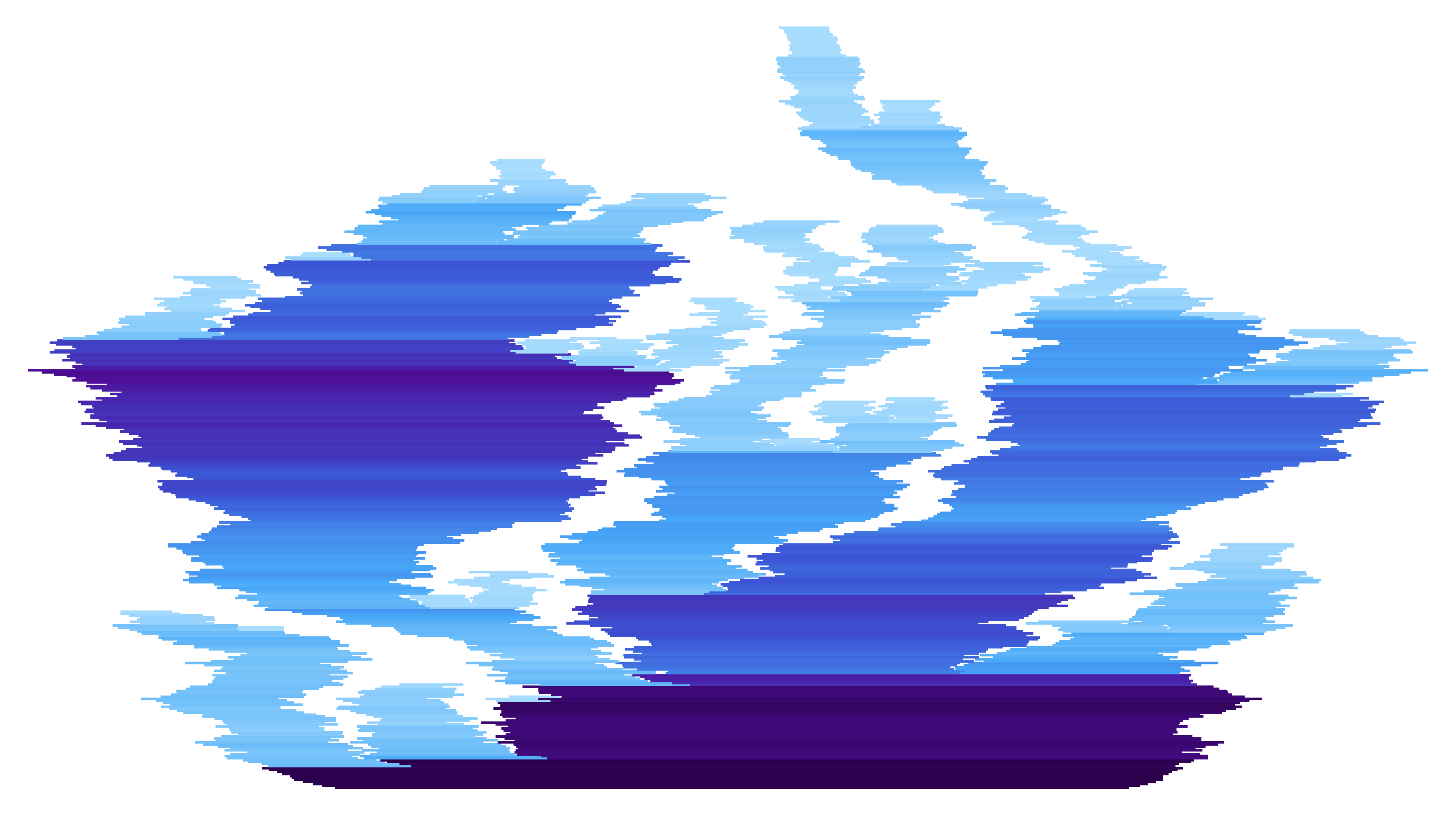}
 \caption{ \label{fig:sim1}A representation of the cycle lengths of a Boltzmann triangulation with a large boundary obtained by slicing it at all heights: horizontal line segments correspond to the lengths of the cycles of the ball of radius $r$ of the triangulation as $r$ increases. Here the longest cycles are the darkest ones. 
 }
 \end{center}
 \end{figure}
\clearpage 

\section{Introduction}
The study of the geometry of large random planar maps is a very active topic in probability theory, in part motivated by its connections with two dimensional Liouville quantum gravity, see \cite{LeGallICM,MieStFlour} for a detailed account and references. One of the main recent achievements in the area is the proof that the Brownian map, which is a random compact surface almost surely homeomorphic to the sphere, is the universal scaling limit  of various classes of random planar maps \cite{LG11,Mie11}. Apart from bijections between maps and decorated trees developed following the work of Schaeffer \cite{Sch98}, 
one of the main techniques to study random maps is the so-called \emph{peeling process}. The peeling process is an algorithmic procedure that explores a map step-by-step in a Markovian way.  It has been introduced in \cite{Ang03,Wat95} and since has  been a key ingredient  to establish many important results concerning the geometric structure of random planar maps \cite{Ang05,Ang03,ACpercopeel,BCsubdiffusive,CurKPZ,CLGpeeling,MN13}. The peeling process was also used to define ``hyperbolic''-type random maps \cite{AR13,CurPSHIT} and served as an inspiration for the introduction of QLE \cite{MS13}. See also the recent work of Budd introducing a variant of the peeling process called the ``lazy'' peeling process \cite{Bud15}.

In this work, we use a ``branching'' peeling 
process to study the lengths of the separating cycles at  fixed heights in large finite random triangulations.

   \begin{figure}[!h]
  \begin{center}
  \includegraphics[width=10cm]{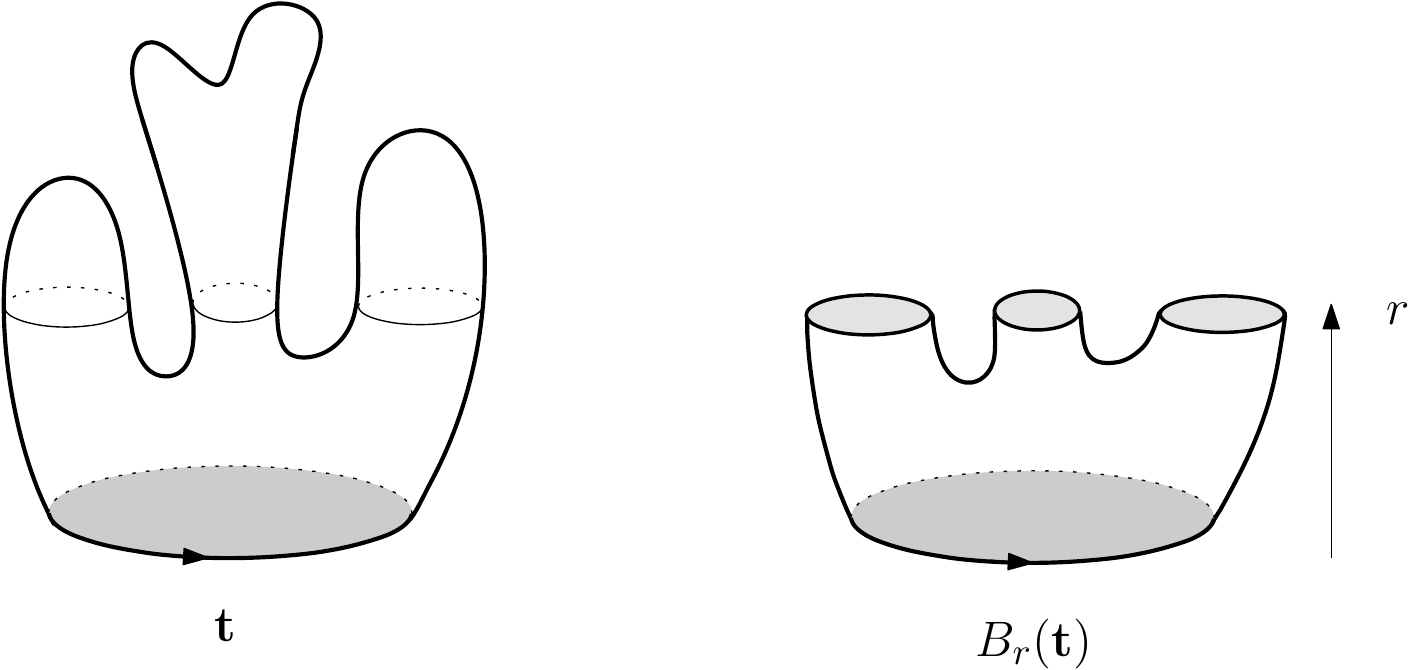}
  \caption{\label{fig:boule-trig} Illustration of the ball of radius $r$ in a triangulation with a boundary.}
  \end{center}
  \end{figure}
  
In order to state our main result, we start by introducing some notation. We restrict our attention to rooted triangulations of type I (i.e.~one-connected) where loops and multiple edges are allowed. As always, all our maps are rooted, meaning that a certain oriented edge, called the \emph{root edge}, is distinguished. In the sequel, without further notice, by \emph{triangulation} we always mean rooted one-connected triangulation.  For $p \geq 1$, a \emph{triangulation of the $p$-gon} is a (finite or infinite) planar map whose faces are all triangles except the face incident to the right of the root edge, called the \emph{external face}, which must be a simple face (pinch-points are not allowed) of arbitrary degree $p \geq1$. We say that $\mathbf{t}$ is a \emph{triangulation with a (simple) boundary} if $\mathbf{t}$ is a triangulation of a $p$-gon for a certain $p \geq 1$. For every $n\geq 0$ and $p \geq1$, we let $ \mathcal{T}_{n,p}$ denote the set of all triangulations of the $p$-gon with  $n$ internal vertices (that are vertices that do not belong to the external boundary).    For $p \geq 1$, the \emph{(critical) Boltzmann distribution} on triangulations of the $p$-gon is the probability
  measure on $\cup_{n\geq 0} \mathcal{T}_{n,p}$ that assigns a mass proportional to 
  $(12 \sqrt{3})^{-n}$ to each triangulation of $ \mathcal{T}_{n,p}$ (this measure can indeed be normalized to give rise to a probability measure, see Section~\ref{sec:enumeration}).  If $ \mathbf{t}$ is a triangulation of the $p$-gon and $x$ is a vertex of $ \mathbf{t}$, the \emph{height} of $x$ is the distance from $x$ to the boundary of $ \mathbf{t}$. For every $r \geq 0$, we denote by $B_{r}( \mathbf{t})$ the ball of radius $r$ of $ \mathbf{t}$ which consists of all the faces of $ \mathbf{t}$ that have at least one vertex at height less than or equal to $r-1$ in $ \mathbf{t}$ (with an additional operation concerning edges linking two vertices at height $r$ that will be discussed in Section~\ref{sec:def}), see Fig.~\ref{fig:boule-trig} and Fig.~\ref{fig:bouleplate}. Apart from its external face, the map $B_{r}( \mathbf{t})$ has a finite collection of ``holes'' surrounded by so-called \emph{cycles}.

 Let $T^{(p)}$ be a random Boltzmann triangulation of the $p$-gon. The main object of interest in the present work is the sequence of the lengths (or perimeters) of the cycles of $B_{r}(T^{(p)})$ ranked in decreasing order, which we  denote by 
 $$ {{\bf L}}^{(p)}(r) \coloneqq \left(L^{(p)}_1(r),   L^{(p)}_2(r), \ldots\right).$$

\paragraph{Main result.}  Our main result is a functional invariance principle that describes the scaling limit of the process ${{\bf L}}^{(p)}=( {{\bf L}}^{(p)}(r), r\geq 0)$ as $p\to\infty$ (see Figure \ref{fig:sim1} for an illustration). In order to state this result, we have  to introduce  first a certain random process with values in the space of non-increasing cube-summable series
$$\ell^{\downarrow}_3 \coloneqq  \left\{{\bf x}=(x_i)_{i\in\N}: x_1\geq x_2\geq \cdots \geq 0 \hbox{ and } \sum_{i=1}^{\infty} x_i^3<\infty\right\}.$$
In this direction, we consider  for every $q\geq 0$
\begin{equation}  \label{eq:defpsi}
 \Psi(q) \coloneqq  - \frac{8}{3} q + \int_{1/2}^1 \left( x^q -1 + q (1-x)\right) \big(x(1-x)\big)^{-5/2} \mathrm{d}x.  
 \end{equation}
 The change of variables $x=\exp(y)$ in the integral above enables us to view $\Psi$ as the Laplace exponent of a spectrally negative L\'evy process. That is, there is a process $\xi=(\xi(t); t\geq 0)$ with independent and stationary increments and no positive jumps, such that 
$\E[\exp( q \xi(t))]=\exp(t\Psi(q))$ for every $t\geq 0$ and $q\geq 0$. It is easily checked that $\Psi'(0)=\E[\xi(1)]<0$ {(see \eqref{eq:drift})}, so $\xi$ drifts to $-\infty$, in the sense that $\lim_{t\to\infty} \xi(t)=-\infty$ a.s.  
{For $x>0$,} we then consider the time-substitution
$$\tau(t)= \inf  \left\{ u \geq 0 ; \int_{0}^{u}  \e^{  \xi(s)/2}\d s>t \right\}, \qquad  t \geq 0 $$
with the convention that $\inf \emptyset =\infty$, i.e.~$\tau(t)=\infty$ whenever $t\geq \int_{0}^{\infty}  \e^{  \xi(s)/2} \mathrm{d}s$.
The process derived from $\xi$ by the Lamperti transformation \cite{Lam72} {started from $x>0$}
\begin{equation} \label{eq:defY}
{{X}}(t)   \coloneqq  {x} \exp\left(\xi(\tau(  {x^{-1/2}} t))\right) \,, \qquad t\geq 0,
\end{equation}
(with the convention $\exp\left(\xi(\infty)\right)=0$) is a self-similar Markov process {with index $-1/2$ started from $x$. This means that or every $x>0$, the rescaled process $(x{X}(x^{-1/2}t), t\geq 0)$, where $X$ is started from $1$, has the same law as ${ X}$ started from $x$. This index of self-similarity is the one of the limiting process in our functional invariance principle, and comes, roughly speaking, from the fact that the height of $T^{(p)}$ is of order $p^{2}$.}

We next use ${{X}}$  to define a self-similar growth-fragmentation process with binary dislocations,
following \cite{BeGF} {(see Section~\ref{sec:cell-systems} for details)}. Informally, we view ${{X}}(t)$ as the size of a typical particle or cell at age $t$, and consider the following system. We start at time $0$ from a single cell with size $1$, and suppose that its size evolves according to ${{X}}$. We interpret each (negative) jump of ${{X}}$ as a division event for the cell, in the sense that whenever 
$\Delta {{X}}(t)\coloneqq {{X}}(t)-{{X}}(t-)=-y<0$, the cell divides at time $t$ into a mother cell and a daughter. After the splitting event, the mother cell has size ${{X}}(t)$ and the daughter cell has size $y$  and the evolution of the daughter cell is then governed by the law of the same self-similar Markov process ${{X}}$ (starting of course from $y$), and is independent of the processes of all the other daughter particles. And so on for the granddaughters, then great-granddaughters, ... 

Specializing results in \cite{BeGF} to this case, one can check that for every $t\geq 0$,  the family of the sizes of cells which are present in the system at time $t$ is cube-summable, and can therefore be  ranked in  non-increasing order. This yields a random variable with values in $\ell^{\downarrow}_3$ which we denote by ${\bf X}(t)=(X_1(t), X_2(t), \ldots)$. The process ${\bf X}=({\bf X}(t); t\geq 0)$  is  called the self-similar growth-fragmentation process with index $-1/2$ associated to the spectrally negative L\'evy process $\xi$ with Laplace exponent $\Psi$.

Finally, set
\begin{equation}
\label{eq:ta}\mathsf{t}_{\one} = \frac{\sqrt{3}}{8 \sqrt{\pi}}, \qquad \aone= \frac{1}{2 \sqrt{3}}.
\end{equation}
    The reason for introducing these quantities stems from the universality of $ {\bf X}$: we believe that the next result holds for a wide class of random maps, and that the only difference will appear in the time change of the limiting process, see \cite[Section 6]{CLGpeeling}. 

Recall that $T^{(p)}$ is  a Boltzmann triangulation of the $p$-gon and that $ {{\bf L}}^{(p)}(r)$ denotes the sequence of the lengths of the cycles of $B_{r}(T^{(p)})$ ranked in decreasing order. 

  \begin{theorem}  \label{thm:main}We have 
    $$ \left( \frac{1}{p} \cdot   {{\bf L}}^{(p)}\big(r \sqrt{p}\big) ; r \geq 0\right) \quad  \xrightarrow[p\to\infty]{(d)}  \quad  \left(  {\bf X} \left(  \frac{2\tone}{\aone} \cdot r \right) ; r \geq 0 \right),$$ 
where the convergence holds in distribution in the space of càdlàg process taking values in $\ell^{\downarrow}_3$ equipped with the Skorokhod $J_{1}$ topology.    \end{theorem}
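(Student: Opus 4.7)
The plan proceeds in three stages: first, construct a \emph{branching peeling} exploration of $T^{(p)}$ adapted to filling in the ball $B_r(T^{(p)})$ as $r$ grows; second, identify the scaling limit of a single cycle perimeter as the self-similar Markov process $X$ of \eqref{eq:defY}; third, lift this single-cell limit to convergence of the whole ranked family to the growth-fragmentation $\mathbf{X}$ in $\ell^{\downarrow}_3$ with the $J_1$ Skorokhod topology.

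For the first stage I would define a peeling algorithm that, at each step, picks one currently active hole, reveals a single triangle adjacent to its boundary, and either modifies the hole's perimeter (when the third vertex of the revealed triangle lies strictly inside the hole) or splits it into two (when the third vertex lies on the already-discovered boundary). Applied independently on each hole and halted once all remaining boundary vertices have height $\geq r$ in $T^{(p)}$, this algorithm recovers $B_r(T^{(p)})$ and organises the collection of cycle perimeters into a branching Markov chain indexed by a genealogical tree. The spatial Markov property of Boltzmann triangulations, together with the enumeration formulas of Section~\ref{sec:enumeration}, yields an explicit one-step transition kernel for the perimeter $P^{(p)}$ of a single hole; the negative jumps of $P^{(p)}$ encode the sizes of the daughter holes created at splits, exactly as in the cell system defining $\mathbf{X}$.

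The second stage is essentially an application of the scaling-limit theorem for Markov chains on $\mathbb{N}$ alluded to in the introduction. The Boltzmann transition kernel has explicit power-law tails, and the change of variable $x = \e^y$ identifies the associated spectrally negative L\'evy process as the one with Laplace exponent $\Psi$ of \eqref{eq:defpsi}. Hence $(P^{(p)}(\lfloor \cdot \rfloor)/p)$ converges, on its own peeling-step time scale, to the Lamperti transform $X$ starting from $1$. The matching of time scales between peeling steps and heights in $T^{(p)}$ is where the constants $\tone$ and $\aone$ enter: $\tone$ controls the typical height growth per peeling step via the two-point function of triangulations, while $\aone$ is the Boltzmann normalisation for the partition function of the $p$-gon. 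Combined with the self-similarity index $-1/2$ of $X$, this produces both the overall rescaling of heights by $\sqrt{p}$ and the deterministic time change $r \mapsto 2\tone r/\aone$ appearing in the limit.

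To pass from one cell to the full system, I would follow the cell-system recipe of \cite{BeGF}. The branching Markov property of peeling implies that, conditionally on the first finitely many splits, the descendants evolve as independent copies of the same process started from their respective perimeters, so an induction over generations promotes single-cell convergence to convergence of any fixed finite collection of cells. The main obstacle, in my view, lies in the passage from finite-dimensional to functional convergence in $\ell^{\downarrow}_3$: one must prevent microscopic cycles from contributing anomalously to the $\ell^{\downarrow}_3$ norm, uniformly in $r$ and in $p$. This is where the crucial martingale alluded to in the abstract intervenes — a sum $\sum_i L_i^{(p)}(\cdot)^{q^\ast}$ over all currently active cells, for a Malthusian exponent $q^\ast$ determined by $\Psi$, which is a genuine martingale in the branching peeling filtration. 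Its $L^1$ boundedness gives the required uniform control on the tail of small cycles and, together with the continuity properties of the growth-fragmentation construction of \cite{BeGF}, yields tightness in $\ell^{\downarrow}_3$ with the $J_1$ topology. The theorem then follows by combining this tightness with the finite-dimensional convergence established via the spine decomposition.
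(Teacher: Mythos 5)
Your outline follows essentially the same route as the paper: a branching peeling-by-layers exploration to recover $B_r(T^{(p)})$; a Markov-chain scaling-limit theorem (the paper uses \cite{BK14}) for the locally largest cycle; a time change from peeling steps to heights yielding the constants $\tone$ and $\aone$; an induction over generations to get finite-dimensional cell-system convergence; and a cube-scale martingale plus Doob maximal inequality to control the $\ell^3$ tail and upgrade to functional $J_1$ convergence (the paper's cutoff procedure). Two small imprecisions that do not affect the structure: the control martingale is $\sum_i f(\ell_i)$ with $f(p)=C(p)/Z(p)\sim c\,p^3$ (the Radon--Nikodym derivative of the UIPT peeling relative to the Boltzmann one), not a bare power $\sum_i L_i^{q^\ast}$, and you have the roles of the constants reversed ($\tone$ comes from the partition function asymptotics $Z(p+1)\sim\tone\,12^p p^{-5/2}$, while $\aone$ is the height-per-step rate).
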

    
    {Note that only the quantity ${\tone}/{\aone}$ is relevant in this statement. However, we have introduced  $\mathsf{t}_{\one}$ and $\aone$ to underline that the latter quantity  is a mixture of two model-dependent constants of different nature, identified in \cite{CLGpeeling}.}

We believe that (versions of) the growth-fragmentation process $ {\bf X}$ should also naturally appear in the continuous scaling limits of large Boltzmann triangulation as in the Boltzmann  ``Brownian Disk'' \cite{Bet11,BM15} or even more directly in the Brownian map \cite{LG11,Mie11}, the Brownian plane \cite{CLGHull,CLGplane} or in the Quantum Loewner Evolution of parameter $ \sqrt{8/3}$  \cite{MS15,MS13}. 
 
 It may now be interesting to briefly, and somewhat informally, recall some of the main properties of ${\bf X}$ which follow from \cite{BeGF}. Roughly speaking, we can think of ${\bf X}$ 
 as a self-similar compensated fragmentation, in the sense that it describes the evolution of particles that grow and divide independently one of the other as time passes.
 In particular, ${\bf X}$ fulfills the branching property, and is self-similar with index $-1/2$, in the sense that for every $c>0$, the rescaled process $(c{\bf X}(c^{-1/2}t), t\geq 0)$ has the same law as ${\bf X}$ started from the sequence $(c, 0, 0, \ldots)$. The path with values in $\ell^{\downarrow}_3$ defined by $t\to {\bf X}(t)$ is a.s. càdlàg  (actually, it even takes values in 
 $\ell^{\downarrow}_q$ for every $q>3/2$ and is càdlàg in 
 $\ell^{\downarrow}_q$ for every $q>2$).  
 
 By construction, all the dislocations occurring in ${\bf X}$ are binary, i.e.~they correspond to replacing some mass $m$ in the system by two smaller masses $m_1$ and $m_2$ with $m_1+m_2=m$. 
 The rates at which such dislocations occur are described by the so-called dislocation measure, which we can view  as a measure $\nu$ on $[1/2,1)$ by focusing on the distribution of the largest fragment (because all dislocations are binary), and is simply obtained by taking the image of the L\'evy measure of the L\'evy process $\xi$ by the exponential function, that is
 $$\nu(\d x) = (x(1-x))^{-5/2} \d x\,,\qquad x\in [1/2,1).$$  Informally, in ${\bf X}$, each mass $m>0$ splits into a pair of smaller masses $(xm,(1-x)m)$ at rate $m^{-1/2} \nu(\d x)$. 
 Observe that $\int(1-x)^2\nu(\d x)<\infty$, as required in \cite{BeCF}, however $\int(1-x)\nu(\d x)=\infty$ which underlines the necessity of compensating the dislocations.
 
 In this direction, we recall that a different  self-similar fragmentation process also occurs when splitting at heights the Brownian Continuum Random Tree (CRT) \cite{Ald91}. Namely, consider a Brownian CRT and for every $r\geq 0$, we denote by $B^{\rm c}_{r}$ the complement of the ball with radius $r$, i.e.~the set of all points at distance greater than or equal to $r$ from the root. As the level $r$ increases, the sequence of the sizes of the connected components of $B^{\rm c}_{r}$ forms a self-similar (pure) fragmentation process with index $-1/2$, no erosion, having only binary dislocations, 
and whose dislocation measure has the form 
$$\nu'(\d x)   =   (2\pi)^{-1/2}(x(1-x))^{-3/2} \d x\,,\qquad x\in [1/2,1).$$
Note that $\int(1-x)\nu'(\d x)<\infty$, so dislocations need not to be compensated and indeed for the CRT, the sizes of the connected components of $B^{\rm c}_{r}$  decrease as $r$ increases. We refer to \cite{BeSSF} and \cite{UB} for details, and further to \cite{MiFST} for an extension to the fragmentation at heights for the stable L\'evy tree. The similarity between the dislocation measures $\nu$ and $\nu'$ is striking. Notice however, that apart from the dislocation measure and the self-similarity exponent,  the description of the process $ {\bf X}$ requires an additional parameter: the drift term $ -\frac{8}{3}$ present in the definition of $ \Psi$. In fact, we expect that we can give an alternative definition of the Brownian map (or more generally of the Brownian disk or the Brownian pla ne) whose primary ingredients are a growth-fragmentation process similar to $ {\bf X}$ describing the cycle structure at heights and a family of independent uniform random variables describing how to split a cycle when a dislocation event occurs. We plan to pursue this goal in future works. See also a related recent approach of Miller and Sheffield giving an axiomatic characterization of the Brownian map \cite{MS15} by using a related ``breadth-first exploration'' of the Brownian map, as well as \cite{MS15+}, which may suggest that the growth-fragmentation $ {\bf X}$ appears in QLE$(8/3,0)$. Let us now give some elements of the proof of Theorem \ref{thm:main}.

\paragraph{Branching peeling process of Boltzmann triangulations.}   A \emph{triangulation with holes} is a planar map whose faces are all triangles except for the external face (which is the one lying on the right of the root edge) and certain distinguished faces (possibly none), called holes, whose boundaries are simple cycles which share no edge in common (but may share edges with the external face). Note that by convention, the external face is never a hole.  The boundaries of the holes are called the \emph{cycles}. Cycles are further rooted, i.e. for each cycle, an oriented edge is distinguished. If $\mathbf{h}$ is a triangulation with holes and $e$ is an edge belonging to a cycle of $\mathbf{h}$, the triangulation with holes obtained by the \emph{peeling of $e$} is the triangulation $\mathbf{h}$ to which we ``add'' the face incident to $e$ that was not already in $\mathbf{h}$ (see Section~\ref{sec:bpp} for a formal definition).
 
In the cases of the UIPT (Uniform Infinite Planar Triangulation) and the UIPQ (Uniform Infinite Planar Quadrangulation), the peeling explorations appearing in the literature are sequences of triangulations with holes starting from the root and obtained iteratively by peeling edges along the boundary of the explored region, but by also adding at each step the finite regions that the added face may enclose (it is known that the UIPT and the UIPQ have one end). In particular, for the UIPT (resp.~UIPQ), only triangulations (resp.~quadrangulations) with a single hole appear in such peeling explorations. However, in our setting of finite maps, we will work with peeling explorations where one does not fill-in holes when adding faces, and one has potentially the choice to peel an edge belonging to different holes at each step. We also restrict ourselves to explorations where at each step, the peeled edge is chosen in a deterministic way (see Section~\ref{sec:bpp} below for a formal definition). For this reason, we call such an exploration a (deterministic) \emph{branching peeling process}.

We exhibit two martingales that appear in any deterministic branching peeling process of a Boltzmann triangulation with a boundary. Roughly speaking, the first one, called the \emph{volume martingale}, is related to  the sum of the squares of the lengths of the cycles of a triangulation with holes, while the second one, called the \emph{cycle martingale}, is related to the sum of their cubes. The latter probabilistic structure will play a key role in this work, and was already indentified in \cite[Theorem 4]{CLGpeeling} for a specific peeling algorithm. Specifically, this martingale can be seen as the Radon--Nikodym derivative of the peeling exploration in the UIPT of the $p$-gon with respect to the same exploration in a Boltzmann triangulation  of the $p$-gon (Proposition \ref{prop:martingale}). This enables us to reformulate questions concerning the peeling process on Boltzmann triangulations in terms of the peeling process of the UIPT, which is well understood \cite{CLGpeeling}. Intuitively speaking, this is very similar to the fact that a critical Galton--Watson tree conditioned to survive is the law of the Galton--Watson tree biased by the population size at each generation, which is a martingale.

Another important tool is the understanding of the evolution of the locally largest cycle: imagine a branching peeling process that starts from a triangulation with holes with one particular distinguished cycle $\mathscr{C}$. Then, at the first time an edge is peeled on $\mathscr{C}$, the cycle $\mathscr{C}$ may give rise to a new cycle or may split into two cycles. Choose to distinguish the longest one, and then repeat the procedure (see Section~\ref{sec:llc} for a formal definition). These distinguished cycles are called the \emph{locally largest} ones. It turns out that their length evolves as a Markov chain on the nonnegative integers, whose transition kernel is  described by using the explicit transition probabilities of the peeling process. An application of the results of \cite{BK14} then yields a functional invariance principle for the perimeter of the locally largest cycle in any (deterministic) branching peeling process  (Proposition \ref{prop:scalingllcycle}).

\paragraph{Branching peeling by layers.} The main tool to prove Theorem \ref{thm:main} is to use the \emph{peeling by layers algorithm}, which specifies how to choose the next edge to peel in a particular way so as to explore metric balls. Indeed, if $T^{(p)}$ is a Boltzmann triangulation of the $p$-gon, it turns out that the sequence of triangulations with holes $(B_{r}( T^{(p)}); r \geq 0)$ may be recovered by considering the branching peeling by layers along a certain increasing sequence of stopping times. By adapting the arguments of \cite{CLGpeeling} to our case, we get a functional invariance principle for the perimeter of the locally largest cycle appearing in  $(B_{r}( T^{(p)}); r \geq 0)$ (Proposition \ref{prop:hauteur}). A final ingredient in the proof of Theorem \ref{thm:main} involves a cutoff procedure.  Indeed, the previous results allow to control the evolution of the perimeters of cycles in a branching peeling process of $T^{(p)}$ until we find a cycle of perimeter less than $ \varepsilon p$. This enables us to control the cycle structure of the triangulation $ \mathsf{Cut}(T^{(p)}, \varepsilon p)$ obtained{, roughly speaking,} by keeping faces of $T^{(p)}$ adjacent to vertices that are not separated from the external boundary by a cycle of perimeter less than $ \varepsilon p$ (see {Section~\ref{sec:cutoff}} for a precise definition and Fig.~\ref{fig:cutoff} for an illustration).

\begin{figure}[!h]
 \begin{center}
 \includegraphics[width=5.2cm]{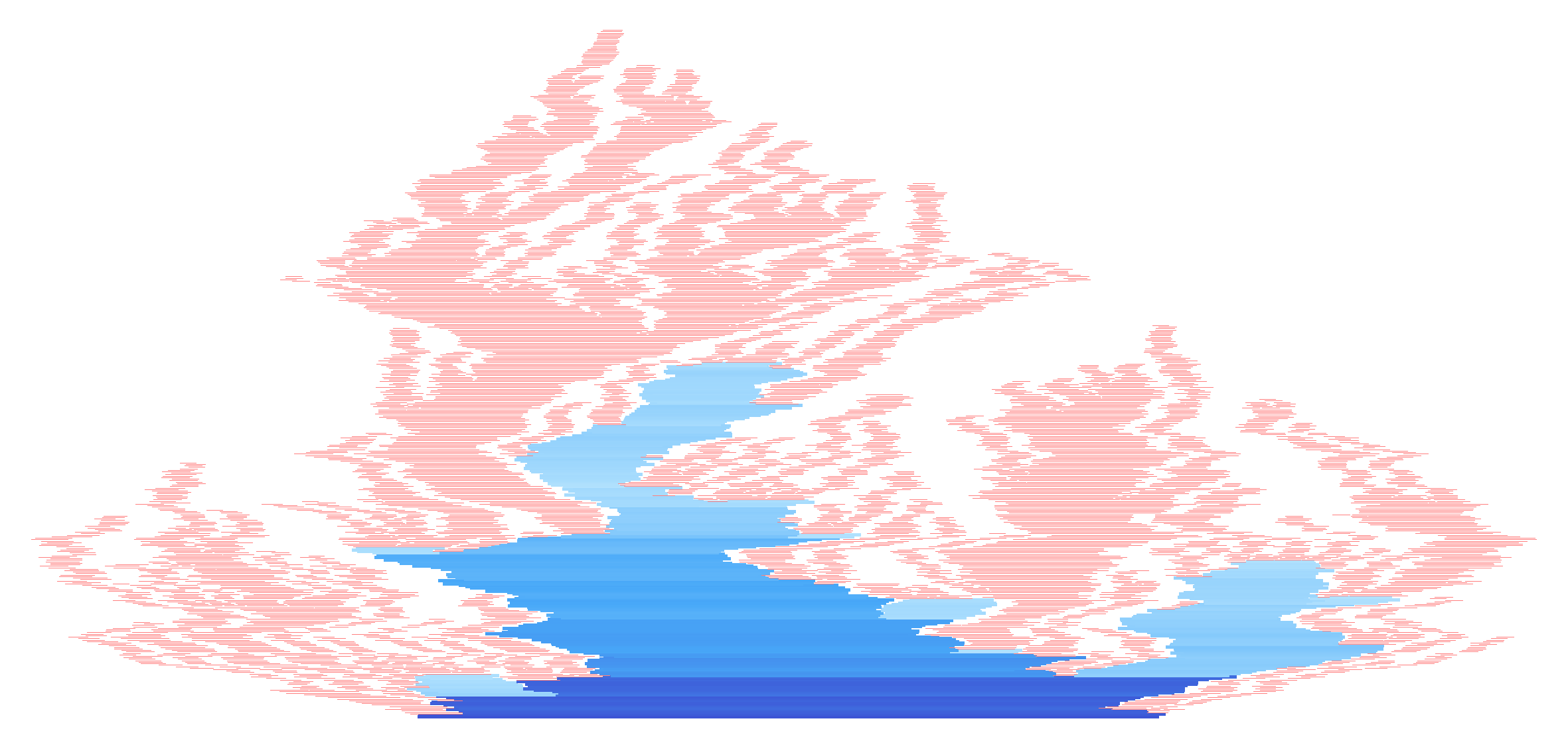}
  \includegraphics[width=5.2cm]{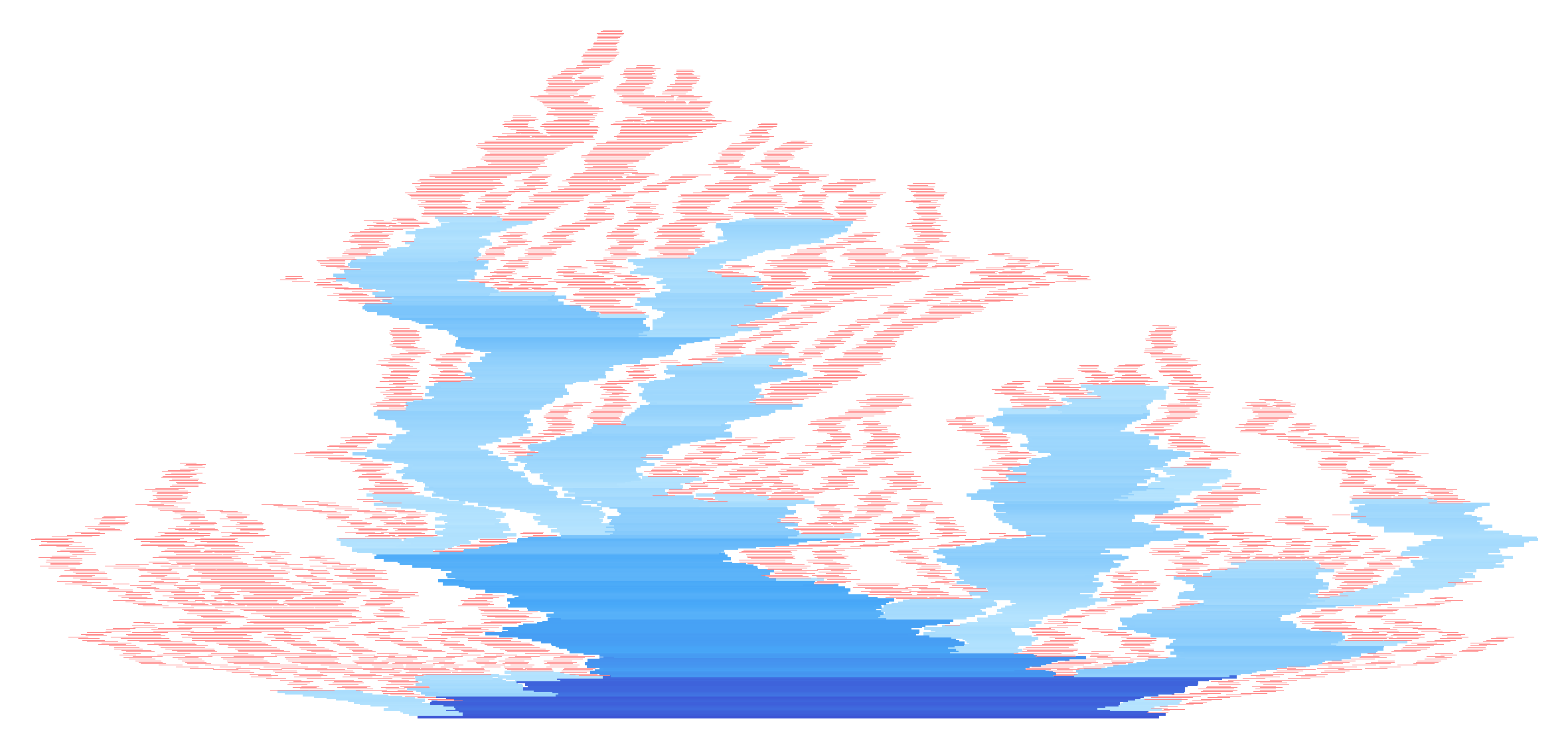}
   \includegraphics[width=5.2cm]{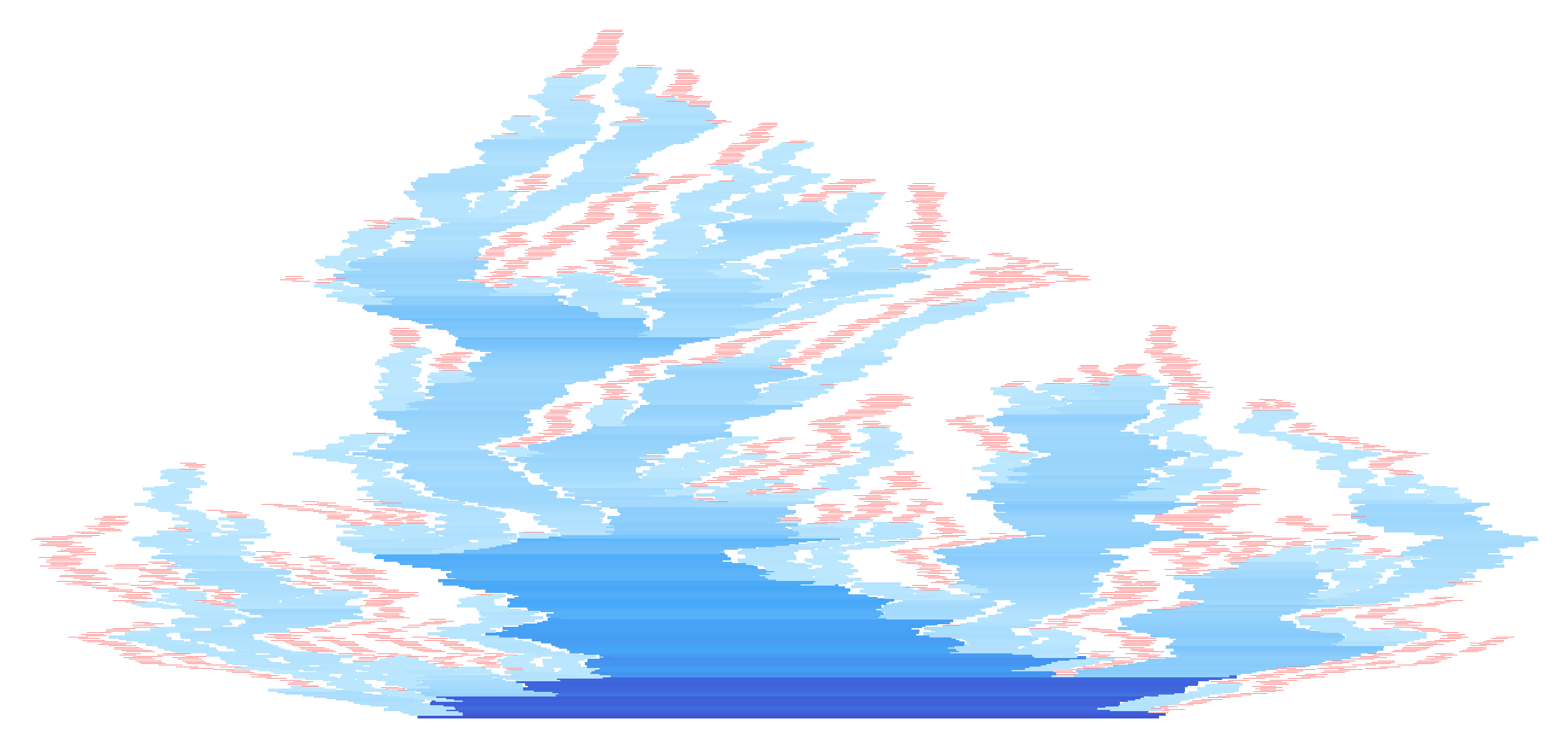}
 \caption{ \label{fig:cutoff}The cutoff procedure: we only keep the cycles that  are not separated from the external boundary of $T^{(p)}$ by a cycle of length less than $ \varepsilon p$. In the figure, the cycles in pink are discarded, and the value of $\varepsilon$ decreases from left to right.}
 \end{center}
 \end{figure}

Towards proving Theorem \ref{thm:main} we show that the cycles at heights of $T^{(p)}$ that have been discarded in $ \mathsf{Cut}(T^{(p)}, \varepsilon p)$ are negligible in the $\ell^3$-sense uniformly in $p \geq 1$ as $ \varepsilon \to 0$ (Proposition \ref{prop:l3lostcycles}). For this proof, the cycle martingale and its associated probabilistic structure play a crucial role. \medskip 

Finally, we establish a result which completes Theorem \ref{thm:main} by a more geometric point of view. We show that as $ \varepsilon \to 0$, uniformly in $p$, the metric structure of $\mathsf{Cut}(T^{(p)}, \varepsilon p)$ is close to that of $ T^{(p)}$. Indeed notice that even if Theorem \ref{thm:main} shows that the cycles of $ T^{(p)}\backslash  \mathsf{Cut}(T^{(p)}, \varepsilon p)$ are small in the $\ell^{3}$ sense, this does not rule out the possibility of having long and thin ``tentacles'' of length order $\sqrt{p}$ in $ T^{(p)}\backslash  \mathsf{Cut}(T^{(p)}, \varepsilon p)$. However, this is not the case: 

\begin{theorem}\label{thm:no tentacles} Let $T^{(p)}$ be a Boltzmann triangulation of the $p$-gon. Then, for every $\delta >0$, we have
$$ \sup_{p \geq 1} \mathbb{P}\Big( \mathrm{d_{H}}\big( T^{(p)} , \mathsf{Cut}(T^{(p)}, \varepsilon p) \big) \geq \delta \sqrt{p} \Big)  \quad \xrightarrow[\varepsilon \to 0]{} \quad 0,$$
{where $ \mathrm{d_{H}}$ denotes the Hausdorff distance (we consider the graph distance on $T^{(p)}$ and view $\mathsf{Cut}(T^{(p)}, \varepsilon p)$ as a subset of   $T^{(p)}$)}.
\end{theorem}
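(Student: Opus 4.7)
The plan is to reduce Theorem~\ref{thm:no tentacles} to a uniform control of the internal heights of the sub-triangulations enclosed by the outermost ``cut'' cycles, and then combine height tails for Boltzmann triangulations of the $\ell$-gon with a counting estimate coming from the cycle martingale of Proposition~\ref{prop:martingale}.

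\textbf{Step 1 (Reduction to internal heights).} If $v\in T^{(p)}$ is not in $\mathsf{Cut}(T^{(p)},\varepsilon p)$, then $v$ is separated from the external boundary by at least one cycle of perimeter less than $\varepsilon p$; among those, let $\mathscr{C}(v)$ be the one closest to the external face. By the very definition of the cutoff, the vertices of $\mathscr{C}(v)$ still belong to $\mathsf{Cut}(T^{(p)},\varepsilon p)$, so $d_{T^{(p)}}(v, \mathsf{Cut}(T^{(p)},\varepsilon p)) \leq H(\mathscr{C}(v))$, where $H(\mathscr{C})$ denotes the maximum graph distance in $T^{(p)}$ from $\mathscr{C}$ to a vertex lying on the side of $\mathscr{C}$ opposite to the external face. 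It therefore suffices to show
$$\sup_{p\geq 1}\mathbb{P}\!\left(\max_{\mathscr{C}\in\mathcal{O}_\varepsilon} H(\mathscr{C}) \geq \delta\sqrt{p}\right) \xrightarrow[\varepsilon\to 0]{} 0,$$
where $\mathcal{O}_\varepsilon$ is the (random, finite) collection of outermost separating cycles of perimeter less than $\varepsilon p$.

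\textbf{Step 2 (Spatial Markov and height tails of $T^{(\ell)}$).} The collection $\mathcal{O}_\varepsilon$ is naturally the stopping line of the branching peeling exploration at the first instant each branch crosses below perimeter $\varepsilon p$. By the spatial Markov property underlying peeling, conditionally on the perimeters $(|\mathscr{C}|)_{\mathscr{C}\in\mathcal{O}_\varepsilon}$, the sub-maps enclosed by the cycles of $\mathcal{O}_\varepsilon$ are independent Boltzmann triangulations with those perimeters, and $H(\mathscr{C})$ is stochastically dominated by the maximum distance $H(T^{(|\mathscr{C}|)})$ from the boundary to an internal vertex in such a sub-triangulation. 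Peeling-by-layers estimates (an adaptation of those used for Proposition~\ref{prop:hauteur} and in \cite{CLGpeeling}) provide a uniform-in-$\ell$ tail bound
$$\mathbb{P}\bigl(H(T^{(\ell)}) \geq A\sqrt{\ell}\bigr) \leq g(A),\qquad \ell\geq 1,\; A\geq 1,$$
with $g(A)$ decaying at least faster than $A^{-6}$ as $A\to\infty$ (in fact exponentially).

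\textbf{Step 3 (Counting via the cycle martingale and conclusion).} Applying Proposition~\ref{prop:martingale} and optional stopping at $\mathcal{O}_\varepsilon$ yields $\mathbb{E}\bigl[\sum_{\mathscr{C}\in\mathcal{O}_\varepsilon}|\mathscr{C}|^3\bigr] \leq C p^3$. Hence for any $\eta>0$, off an event of probability at most $\eta$ we may assume $\sum|\mathscr{C}|^3 \leq C p^3/\eta$, so the number $N_k$ of cycles of $\mathcal{O}_\varepsilon$ with perimeter in the dyadic range $[2^{-k-1}\varepsilon p,\,2^{-k}\varepsilon p)$ satisfies $N_k \leq K 2^{3k}/(\eta\varepsilon^3)$ for each $k\geq 0$. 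Combining Steps 1 and 2 with a union bound,
$$\mathbb{P}\!\left(\max_{\mathscr{C}\in\mathcal{O}_\varepsilon}H(\mathscr{C}) \geq \delta\sqrt{p}\right) \;\leq\; \eta \;+\; \sum_{k\geq 0} \frac{K\,2^{3k}}{\eta\,\varepsilon^3}\, g\!\left(\frac{\delta\, 2^{k/2}}{\sqrt{\varepsilon}}\right).$$
The decay of $g$ makes the sum in $k$ finite and dominated by its $k=0$ term up to a constant; the resulting upper bound then tends to $\eta$ as $\varepsilon\to 0$ for every fixed $\eta,\delta>0$. Letting $\varepsilon\to 0$ and then $\eta\to 0$ yields the theorem. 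The main obstacle is Step~2: one needs a tail estimate for $H(T^{(\ell)})$ whose constants and decay rate do not deteriorate as $\ell\to\infty$, which requires a careful reworking of the peeling-by-layers height estimates in a regime where the starting perimeter $\ell$ is allowed to vary together with the target height.
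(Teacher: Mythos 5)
Your proposal is exactly the ``first approach'' that the paper discusses in Section~\ref{bigsec:proofthm2} and explicitly discards. The crux is Step~2: your asserted uniform tail bound
$$\mathbb{P}\bigl(\mathsf{Height}(T^{(\ell)}) \geq A\sqrt{\ell}\bigr) \leq g(A),\qquad g(A)=O(A^{-6}),$$
is word-for-word Conjecture~\ref{open:hauteur} of the paper, and the authors state explicitly that they ``have not been able to establish the estimate of Conjecture~\ref{open:hauteur}'' and ``were forced to take a different path.'' Everything downstream of Step~2 is essentially the computation the paper writes out immediately after Conjecture~\ref{open:hauteur} (optional stopping of the cycle martingale at the cutoff time via Lemma~\ref{lem:epsexplo}, then Markov's inequality); your dyadic decomposition is a harmless repackaging of that union bound. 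So Steps~1 and~3 are fine, but the proof rests entirely on an unproven ingredient.

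Why Step~2 is not a routine ``reworking'' of the peeling-by-layers estimates: Proposition~\ref{prop:hauteur} and its ingredients (Lemma~\ref{LEM:HAUTEPS}, the absolute continuity \eqref{eq:backtouipt} with the UIPT of the $p$-gon) give a \emph{convergence in distribution} at scale $\sqrt{p}$, not a tail bound with uniform constants and polynomial decay. Moreover the absolute continuity trick deteriorates precisely in the regime of interest (deep in the tail, small surviving perimeter, the Radon--Nikodym factor $f(\widetilde L)/f(p)$ is no longer bounded below). The paper's actual route circumvents this by proving a purely \emph{volumetric} cutoff estimate (Proposition~\ref{prop:volumesmall}: the pieces $\mathrm{T}_i^{<\varepsilon p}$ have volume $o(p^2)$ uniformly), and then converting volume-smallness into distance-smallness through a coupling: Lemma~\ref{lem:coupling} embeds $T^{(p)}$ with positive probability into a Boltzmann triangulation of the sphere of size $\gtrsim p^2$, which allows them to invoke the known convergence to the Brownian map (Proposition~\ref{prop:volumedistancetrig}) and the fact that the Brownian map's mass measure has full support. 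That detour is the genuine content of the proof, and it is precisely what your Step~2 sweeps under the rug.
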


Note that since the height of $\mathsf{Cut}(T^{(p)}, \varepsilon p)$ is of order $ \sqrt{p}$, Theorem \ref{thm:no tentacles} tells us that $ \mathsf{Cut}(T^{(p)}, \varepsilon p)$ is indeed a good metric approximation of $T^{(p)}$.  An informal consequence of the above result together with Theorem \ref{thm:main} is that the ``cactus tree'' made by contracting all cycles at heights of $T^{(p)}$ into points converges, after scaling by ${p}^{-1/2}$,  towards the ``continuous tree'' associated with the growth-fragmentation process $ \mathbf{X}$, see \cite{CLGMcactus} for a related convergence using totally different tools.  This is a further indication that the scaling limit of random planar maps could indeed be described via (a version of) the process $ {\bf X}$ of Theorem \ref{thm:main}. The main idea underlying the proof of Theorem \ref{thm:no tentacles} is, roughly speaking,  to couple with positive probability a Boltzmann triangulation $T^{(p)}$ with a large uniform triangulation of the sphere of total size at least $p^2$  (Lemma \ref{lem:coupling}). This coupling enables us to transfer known results concerning the metric structure of uniform triangulations of the sphere (which have been established using bijective tools) to the case of Boltzmann triangulations with a boundary of fixed length.\bigskip

The rest of the paper is organized as follows. In Section \ref{bigsec:bpp}, we introduce the notion of general branching peeling explorations, and study the probabilistic structure that arises for Boltzmann triangulations. In particular, we explain their relation with the peeling explorations of the UIPT considered in \cite{CLGpeeling}. Using \cite{BK14}, we then obtain a scaling limit for the lengths of the locally largest cycle in large Boltzmann triangulations. In Section \ref{bigsec:pbl}, we then use the peeling by layers, which is specific peeling algorithm that is the key in this work, to get the scaling limit of the locally largest cycle at given heights. We also introduce the cut-off procedure.  Section \ref{bigsec:proofthm1} introduces the basics on cell systems and their scaling limits in order to prove Theorem \ref{thm:main}. Finally, Section \ref{bigsec:proofthm2} is devoted to the proof of Theorem \ref{thm:no tentacles}.\medskip

\noindent \textbf{Acknowledgments:} We thank the Isaac Newton Institute for hospitality, where part of  this work has been carried out during the Random Geometry 2015 program. We are grateful to Mireille Bousquet-Mélou for finding a simple form of $\kappa(q)$ in \eqref{eq:mireille}. NC and IK acknowledge the support of ANR GRAAL (ANR-14-CE25-0014) and ANR MAC 2 (ANR-10-BLAN-0123). {Finally, we would like to thank two anonymous referees for many useful comments.}

\tableofcontents

\section{Branching peeling exploration of Boltzmann triangulations}
\label{bigsec:bpp}

In this section we rigorously introduce  the notion of deterministic branching peeling process, identify the two martingales appearing in any (deterministic) branching peeling process, and establish a functional invariance principle for the perimeter of the locally largest cycle in such explorations.

\subsection{Definitions}
\label{sec:def}

Recall from the introduction that a triangulation with holes is  a planar map whose faces are all triangles except for the external face (the one lying on the right of the root edge) and certain distinguished faces (possibly none), called holes, whose boundaries are simple cycles which share no edge in common (but can share edges with the external face). Recall that  boundaries of the holes are called the cycles and that the external face is never a hole (a triangulation with holes having no holes is just a triangulation of the polygon). It will be implicit that a distinguished oriented edge is chosen on each cycle, which allows  to glue a triangulation with holes $\mathbf{h}$ inside a cycle $\mathscr{C}$ in a canonical way by gluing the external face of $\mathbf{h}$ on $\mathscr{C}$ and by matching the roots.

If $ \mathbf{t}$ is a triangulation of the $p$-gon and $x\in\mathbf{t}$, recall that the \emph{height} of $x$ is the distance of $x$ to the boundary of $ \mathbf{t}$. For $r \geq 1$, the ball of radius $r$ of $ \mathbf{t}$ is the map $B_{r}( \mathbf{t})$ that  consists of all the faces of $ \mathbf{t}$ which have a vertex  at height less than or equal to  $r-1$ in $ \mathbf{t}$, and by convention $B_{0}(\mathbf{t})$ is just the boundary of $\mathbf{t}$. We also make an additional operation:  in $ B_{r}( \mathbf{t})$, the edges between two vertices at distance $r$ which do not belong to a same cycle are split into two edges enclosing a $2$-gon. This may seem strange at first glance, but will be essential in the sequel: roughly speaking, seen from the external boundary of $B_{r}( \mathbf{t})$ one does not yet know whether or not there are vertices ``inside'' these $r-r$ edges. By construction, $B_{r}( \mathbf{t})$ is a triangulation with holes (see Fig.~\ref{fig:bouleplate} for an example).
  
 \begin{figure}[!h]
  \begin{center}
  \includegraphics[width=0.9 \linewidth]{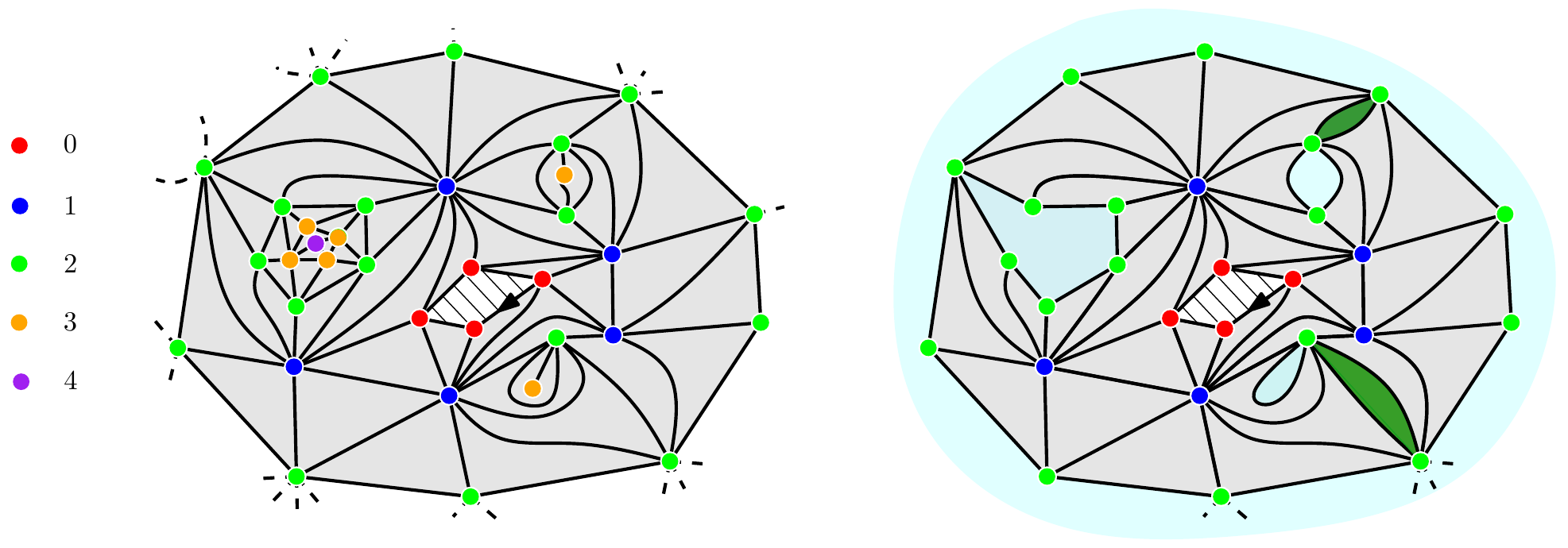}
  \caption{ \label{fig:bouleplate} Illustration of the ball of radius $2$ in a certain triangulation of the $4$-gon. Notice that the edges between two vertices at distance $2$ which do not belong to a same cycle are split into two edges enclosing a $2$-gon  (in dark green). The external face of the triangulation is hatched.}
  \end{center}
  \end{figure}

\subsection{Enumeration of triangulations with a simple boundary}

\label{sec:enumeration}
Recall that $ \mathcal{T}_{n,p}$ denotes the set of all triangulations of the $p$-gon with  $n$ internal vertices. We now state several known enumerative results that we will need in the sequel (we refer to \cite[Section~6.1]{CLGpeeling}  and to \cite{Kri07} for proofs). First, one can exactly enumerate the set $\mathcal{T}_{n,p}$ for $n \geq 0, p \geq 1$ with $(n,p) \neq (0,1)$:
\begin{equation}
\label{eq:tnpexact}
   \# \mathcal{T}_{n,p} =4^{n - 1}  \frac{p\, (2p)!\,(2 p + 3 n - 5)!!}{(p!)^2\,n! \,
(2 p + n - 1)!!}   \quad \mathop{\sim}_{n \rightarrow \infty} \quad    C(p) \,(12 \sqrt{3})^{n}\, n^{-5/2},
     \end{equation}
where
   \begin{equation} C(p) = \frac{3^{p-2} \, p \, (2 p)!}{4 \sqrt{2 \pi} \,  (p!)^2}  \quad \mathop{\sim}_{p \rightarrow \infty} \quad \frac{1}{36\pi \sqrt{2}} \, \sqrt{p} \ 12^p. \label{equivalentcp} 
   \end{equation}
Note that $ \#T_{0,1}=0$.  The exact formula for $ \# \mathcal{T}_{n,p} $ in \eqref{eq:tnpexact} gives $\# \mathcal{T}_{n,p}=1$  for $n=0$ and $p=2$. This
   formula is valid provided we make  the special 
   convention that the only element of $ \mathcal{T}_{0,2}$ is a rooted planar map consisting of a single (oriented) edge between two vertices which is   viewed as a triangulation with a simple boundary of length $2$. We shall call this map the trivial triangulation.
    It will be used in the sequel to ``fill-in'' holes of size two in a triangulation with holes. We also note that there is a natural bijection between  plane triangulations (or triangulations of the sphere) having $n$ vertices and triangulations of the $1$-gon having $n-1$ inner vertices \cite[Section 1.3]{Kri07}: simply split the root edge of a triangulation of the sphere (which may be a loop!) into a $2$-gon, and add a loop inside this $2$-gon, which is declared to be the new root. Hence, in the following, we may and will view all plane triangulations as triangulations of the $1$-gon after applying the above operation, which we call the \emph{root-transformation}.

\begin{figure}[!h]
 \begin{center}
 \includegraphics[width=0.6\linewidth]{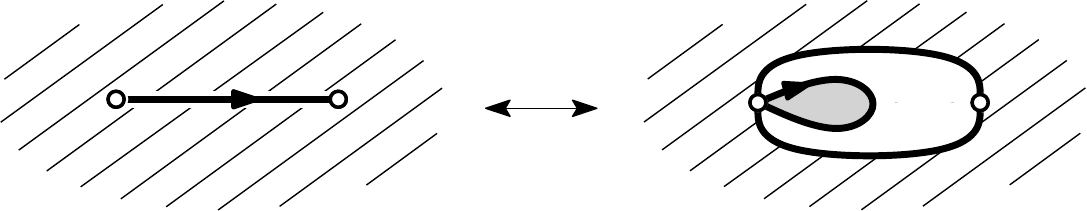}
 \caption{ \label{fig:transform-root} The root-transformation operation.}
 \end{center}
 \end{figure}

   The exponent $5/2$ appearing in \eqref{eq:tnpexact} is typical in the enumeration of planar maps, and yields that
   $$Z(p) \coloneqq \sum_{n=0}^\infty \Big(\frac{1}{12 \sqrt{3}}\Big)^n\, \#  \mathcal{T}_{n,p}<\infty, \qquad p \geq 1.$$ 
The expression of $Z(p)$ is explicit:
  \begin{equation}
  \label{eq:defzp}
  Z(p) = \frac{6^p\,(2p-5)!! }{8 \sqrt{3} \,p!} \quad \text{if \ } p\geq 2,
  \qquad \qquad  Z(1) =  \frac{2 - \sqrt{3}}{4}. \end{equation}
  The quantity $Z(p)$ can be interpreted as the partition function of the (critical) Boltzmann probability distribution of triangulations of the $p$-gon (also called the free distribution in \cite{AS03}).  More precisely, the latter is a probability
  measure on $\bigcup_{n\geq 0} \mathcal{T}_{n,p}$ that assigns mass
  $(12 \sqrt{3})^{-n} Z({p})^{-1}$ with each triangulation of $ \mathcal{T}_{n,p}$. We also have 
  $$ \sum_{p=0}^\infty Z(p+1) \,x^p =  \frac{1}{2} + \frac{(1-12x)^{3/2}-1}{24 \sqrt{3} x}, \qquad x \in [0,1/12].$$
From \eqref{eq:defzp}  and the last display, we get the following estimates, which we state for later use: 
\begin{eqnarray} Z(p+1) \!\!&\underset{p\to \infty}{\sim}& \!\!     \mathsf{t}_{ \one} \cdot  12^p p^{-5/2}, \quad \mbox{ where }\  \mathsf{t}_{\one} = \frac{\sqrt{3}}{8 \sqrt{\pi}} ,\label{eq:asympZp}\\ 
\sum_{p=0}^\infty Z(p+1)\,12^{-p}  \!\! \!\!&=& \!\!  \!\!\frac{3-\sqrt{3}}{6} \label{eq:sumZp}\;,\\
\sum_{p=0}^\infty p\, Z(p+1)\,12^{-p} \!\!  \!\!&=& \!\!  \!\!  \frac{\sqrt{3}}{6} \label{eq:sumkZp}\;.  \end{eqnarray}

We use the notation $ \tone$ following \cite[Section~6]{CLGpeeling}, as it will be useful to discuss universality results. Also, since  triangulations of the $1$-gon are in bijection with plane triangulations by the root-transformation, the Boltzmann distribution on  the latter set induces a  probability measure on the space of all  triangulations of the sphere (including the trivial one). A random triangulation distributed according to this probability measure is called a Boltzmann triangulation of the sphere.  Equivalently, the law of a Boltzmann triangulation of the sphere assigns  mass  $(12\sqrt{3})^{1-n} Z(1)^{-1}$ with every triangulation of the sphere with $n$ vertices. 

\paragraph{UIPT of the $p$-gon.} For fixed $p \geq 1$, there exists an infinite random map  $T_{\infty}^{(p)}$ such that if $T_{n}^{(p)}$ is a random triangulation chosen uniformly at random in $ \mathcal{T}_{n,p}$ then the convergence
 \begin{eqnarray} \label{def:uipt} T_{n}^{(p)} \quad   \xrightarrow[n\to\infty]{(d)} \quad  T_{\infty}^{(p)}  \end{eqnarray}holds in distribution for the so-called local distance. The infinite random map  $T_{\infty}^{(p)}$ is called the Uniform Infinite Planar Triangulation (UIPT) of the $p$-gon. In the case $p=1$, by the root-transformation, $T_{\infty}^{(1)}$ can be seen as the standard UIPT of the plane (type I), as was proved by Angel \& Schramm \cite{AS03} for type II triangulations (but the techniques extend to the type I). In the case $p=1$, there is a ``bijective'' construction of the UIPT of type I \cite[Proposition~6.2]{Ste14}. See also \cite{CLGmodif} for a recent construction of the UIPT of type I via its skeleton decomposition \cite{Kri04}.
\paragraph{Rigidity.} If $ \mathbf{h}$ and $ \mathbf{h}'$ are two triangulations with holes, we say that $ \mathbf{h}$ is a sub-triangulation of $ \mathbf{h'}$, and we write $ \mathbf{h} \subset \mathbf{h}'$, if $ \mathbf{h'}$ can be obtained from $\mathbf{h}$  by gluing triangulations with holes along the boundaries of certain holes  of $\mathbf{h}$ (again, recall that gluing the trivial triangulation inside a $2$-gon amounts to just identifying the two edges of the $2$-gon). We shall equivalently use the terms of \emph{gluing} or \emph{filling-in}. {We say that a triangulation with holes $ \mathbf{h}$ is rigid if}  two different ways of filling-in $ \mathbf{h}$ always give rise to two different triangulations with boundaries (see \cite[Definition 4.7]{AS03}). In particular, if $ \mathbf{h}$ and $ \mathbf{h}'$ are two triangulations with holes  {such that $\mathbf{h}$ is rigid and $ \mathbf{h} \subset \mathbf{h}'$}, then $\mathbf{h}'$ is obtained by filling-in  in a unique way certain holes of $ \mathbf{h}$.

\paragraph {Notation.} Without further notice, we work on the canonical space $ \Omega$ of all (possibly infinite) triangulations with holes equipped with the Borel $\sigma$-field for the local topology, and the notation $ \mathbb{P}^{(p)}, \mathbb{E}^{(p)}$ (resp.~$ \mathbb{P}^{(p)}_{\infty}, \mathbb{E}^{(p)}_{\infty}$) are used for the probability and expectation on $\Omega$ relative to the law of a Boltzmann triangulation of the $p$-gon (resp.~of the UIPT of the $p$-gon). Under these measures, the variables will be denoted by $ \mathbf{t}$ or omitted; for instance, if $T^{(p)}$ is a Boltzmann triangulation of the $p$-gon, we have $ \mathbb{E}[\phi(B_{r}(T^{(p)}))] = \mathbb{E}^{(p)}[\phi(B_{r})]$ for every positive measurable function $\phi$.

\subsection{Branching peeling explorations}

\label{sec:bpp}

We now define the branching peeling exploration, which is a means to explore a triangulation with a boundary face after face. If $\mathbf{h}$ is a triangulation with holes, we denote by $ \mathcal{C}(\mathbf{h})$ the union of its cycles.  Formally, a branching peeling exploration depends on a function $ \mathcal{A}$, called the \emph{peeling algorithm}, which associates with any finite triangulation with holes $ \mathbf{h}$  an edge of $ \mathcal{C}(\mathbf{h}) \cup \{\dagger\}$, where $\dagger$ is a cemetery point which we interpret as the desire to stop the exploration. In particular, if $ \mathbf{h}$ has no holes (meaning that $\mathbf{h}$ is a triangulation of the $p$-gon), we must have $ \mathcal{A}( \mathbf{h}) = \dagger$. We say that this peeling algorithm is deterministic since no randomness is involved in the definition of $ \mathcal{A}$. 

Let $ \mathbf{t}$ be a triangulation with a boundary. Intuitively speaking, given the peeling algorithm $ \mathcal{A}$, the branching peeling process of $\mathbf{t}$ is a way to iteratively explore $\mathbf{t}$ starting from its boundary and by discovering at each step a new triangle by \emph{peeling an edge}  determined by the algorithm $ \mathcal{A}$. If $\mathbf{h} \subset \mathbf{t}$ is a triangulation with holes and $e$ is an edge belonging to a cycle  $\mathscr{C}$ of $\mathbf{h}$, the triangulation with holes  $\mathbf{h}_{e}$ obtained by peeling $e$ is defined as follows. Let $ \mathbf{t}({ \mathscr{C}})$ be the triangulation with a boundary glued inside $\mathscr{C}$ in $\mathbf{t}$.
\begin{itemize}
\item \emph{Event $ \mathsf{V}$:}  The triangulation $ \mathbf{t}({ \mathscr{C}})$ is the trivial triangulation (this may only happen  if $p=2$). Then $\mathbf{h}_{e}$ is obtained from $\mathbf{h}$ by closing this cycle (that is by gluing the trivial triangulation in $\mathscr{C}$). See the right-most part of  Fig.~\ref{fig:casespeelb} for an illustration.	
\end{itemize}
Otherwise, let $\triangle_{e}$ be the triangle adjacent to $e$ in  $ \mathbf{t}({ \mathscr{C}})$. Then, roughly speaking, $\mathbf{h}_{e}$ is obtained from $\mathbf{h}$ by ``gluing'' $\triangle_{e}$ along $e$ inside the hole delimitated by $\mathscr{C}$. Specifically, letting $p$ be the perimeter of $\mathscr{C}$, there are two possible cases:
\begin{itemize}
\item \emph{Event $ \mathsf{C}$:} The third vertex of $\triangle_{e}$ does not belong to the cycle $\mathscr{C}$. Then $ \mathbf{h}_{e}$ is defined from $\mathbf{h}$ by gluing a new triangle on $e$. See the left-most part of Fig.~\ref{fig:casespeelb} for an illustration.
\item \emph{Event $ \mathsf{G}_{k}$:} The third vertex $W_{e}$ of $\triangle_{e}$ belongs to the cycle $\mathscr{C}$; let $k \in \{0,1, \ldots,p-1\} $ be the number of edges between  $W_{e}$ and $e$ in clockwise order. Then $ \mathbf{h}_{e}$ is obtained from $ \mathbf{h}$ by gluing a new triangle on $e$ and by \emph{only} identifying  its third vertex with $W_{e}$. See  the middle part of Fig.~\ref{fig:casespeelb} for an illustration.
\end{itemize}

\begin{figure}[!h]
 \begin{center}
 \includegraphics[width=15cm]{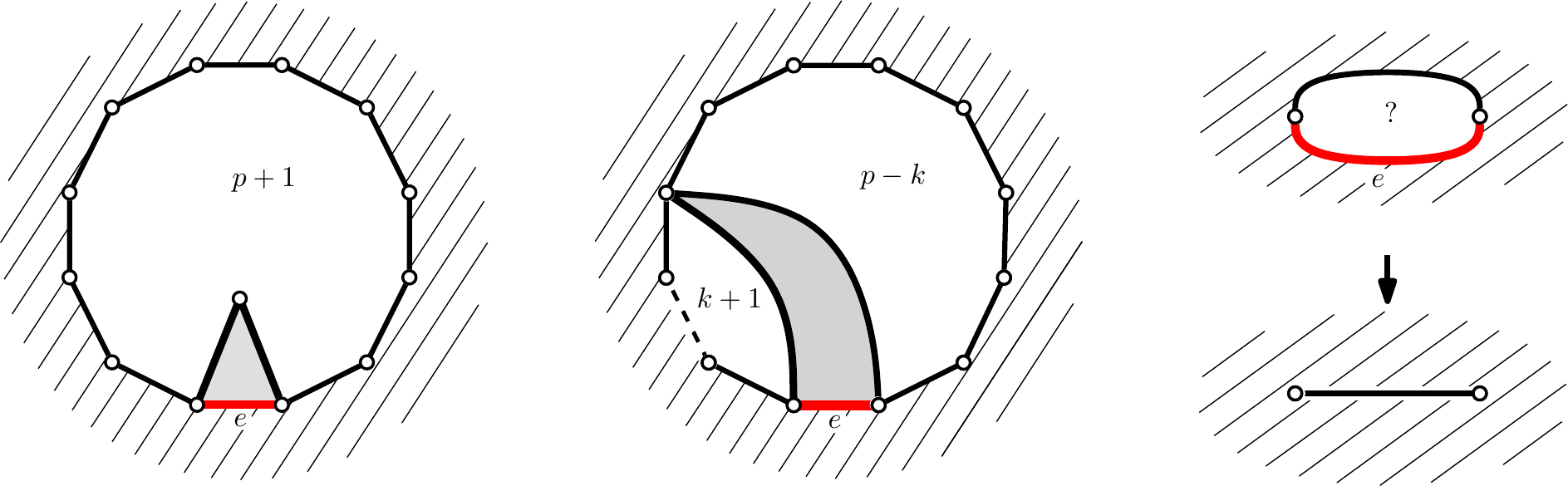}
 \caption{ \label{fig:casespeelb} From left to right: illustration of the three possible events $ \mathsf{C}$, $\mathsf{G}_{k}$ (in this case, $k+1$ and $p-k$ represent the perimeters of the two new cycles) and $\mathsf{V}$. }
 \end{center}
 \end{figure}
 
On the event $\mathsf{G}_{k}$, we insist that the two other edges of the new triangle are \emph{never} glued to an edge of $\mathscr{C}$, so that the cycle $\mathscr{C}$ of length $p$ of $ \mathbf{h}$ is \emph{always} split into two cycles of perimeter $k+1$ and $p-k$. In particular, when $k=0$ or $k=p-1$, one creates a loop, and when $k=1$ or $k=p-2$, one creates a cycle of length $2$ (which may be empty in $\mathbf{t}$), see Fig.~\ref{fig:casespeelb} for an illustration. The reader may have a look at the three peeling steps needed to peel a simple triangle in Fig.~\ref{fig:peeltrig}.
\begin{figure}[!h]
 \begin{center}
 \includegraphics[width=14cm]{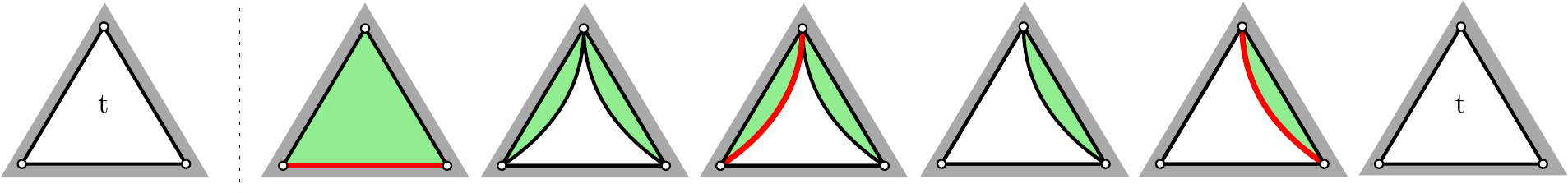}
 \caption{\label{fig:peeltrig}The three peeling steps needed to peel a simple triangle (on the left). The external face is in gray, the unexplored regions are in green and the edge to peel at each step is in red.}
 \end{center}
 \end{figure}
 
Formally, if $\mathbf{t}$ is a (finite or infinite triangulation) with a boundary, the branching peeling exploration of $ \mathbf{t}$ with algorithm $ \mathcal{A}$ is by definition the sequence of triangulations with holes
$$ \mathcal{H}_{0}(\mathbf{t}) \subset  \mathcal{H}_{1}(\mathbf{t}) \subset \cdots \subset \mathcal{H}_{n}(\mathbf{t}) \subset \cdots \subset \mathbf{t},$$ obtained as follows:
\begin{itemize}
\item The triangulation with holes $  \mathcal{H}_{0}(\mathbf{t})$ is made of the boundary of $ \mathbf{t}$, that is a simple path with one oriented edge such that the external face is on its right and that the face on its left is a hole of the same perimeter,
\item for every $ i \geq 0$, if  $  \mathcal{A}(\mathcal{H}_{i}(\mathbf{t})) \ne \dagger$, then the triangulation $ \mathcal{H}_{i+1}(\mathbf{t})$ is obtained from $ \mathcal{H}_{i}(\mathbf{t})$ by peeling the edge $ \mathcal{A}( \mathcal{H}_{i}(\mathbf{t}))$. If $  \mathcal{A}(\mathcal{H}_{i}(\mathbf{t}))= \dagger$, then $ \mathcal{H}_{i+1}(\mathbf{t}) = \mathcal{H}_{i}(\mathbf{t})$ and the exploration process stops.
\end{itemize}

We now fix $p \geq 1$ and a peeling algorithm $ \mathcal{A}$.   Let us make a couple of simple observations. First, if $ \mathbf{t}$ is finite, there exists an integer $N \geq 0$ such that $\mathcal{H}_{n}(\mathbf{t})= \mathcal{H}_{N}(\mathbf{t})$ for every $n \geq N$:  the branching peeling process of $ \mathbf{t}$ will eventually stop. This could happen when the triangulation is completely discovered, or before due to the possible value $\dagger$ given by $ \mathcal{A}$  (indeed, observe that the algorithm that stops immediately, that is $ \mathcal{A}(\cdot)=\dagger$, is a valid peeling algorithm). On the other hand, the branching peeling exploration of $ \mathbf{t}$ may continue forever if $ \mathbf{t}$ is infinite, but not necessarily always. 

 If $i \geq 0$, the triangulation with holes  $ \mathcal{H}_{i}( \mathbf{t})$ is obviously a (deterministic) function of $ \mathbf{t}$. But  note that  $(\mathcal{H}_{j}( \mathbf{t}); 0 \leq j \leq i)$
is also a (deterministic) function of $\mathcal{H}_{i}( \mathbf{t})$. {Indeed, for every $i \geq 1$,   $\mathcal{H}_{i}(\mathbf{t})$ is rigid (see e.g.~\cite[Lemma 4.8]{AS03}). As a consequence,} there is a unique way to fill-in the holes of $ \mathcal{H}_{i}(\mathbf{t})$  to obtain $ \mathbf{t}$. Finally, to simplify notation, we will often write $ \mathcal{H}_{i}$ instead of $ \mathcal{H}_{i}(\mathbf{t})$.

There are obviously many peeling algorithms one can use, but it turns out that branching peeling explorations of Boltzmann triangulations of the $p$-gon and the UIPT of the $p$-gon share several interesting properties, irrespective of the chosen peeling algorithm (as explained in the Introduction, we will later specialize in Section~\ref{sec:branchedpeelinglayers} the peeling algorithm in order to study specific metric properties of Boltzmann triangulations).  

\subsection{Peeling of Boltzmann triangulations with a boundary}
\label{sec:peel}

As before, we fix a deterministic peeling algorithm $ \mathcal{A}$. For every $n \geq 0$, we  denote by $ \mathcal{F}_{n}$ the $\sigma$-field on the set of all triangulations with holes of the $p$-gon  generated by  the mappings $\mathbf{t} \mapsto \mathcal{H}_{0}(\mathbf{t}), \mathcal{H}_{1}(\mathbf{t}) , \ldots, \mathcal{H}_{n}(\mathbf{t})$ (the dependence in $p$ is implicit).  

Recall from Section \ref{sec:enumeration} the constants $C(p)$ and $Z(p)$ for $p \geq 1$.

\begin{proposition} \label{prop:peelinggenerallaw} Fix $p \geq 1$ and $n \geq 0$. Let $ \mathbf{h}$ be a  triangulation with holes such that there exists a triangulation $\mathbf{t}$ of the $p$-gon with $ \mathcal{H}_{n}(\mathbf{t})= \mathbf{h}$. We denote by $\ell_{1}, \ell_{2}, \ldots, \ell_{k}$ the perimeters of the cycles of $ \mathbf{h}$, and by $N$ the numbers of inner vertices of $\mathbf{h}$ (not incident to the external face). Then
\begin{equation}
\label{eq:peellaw}\mathbb{P}^{(p)}( \mathcal{H}_{n} = \mathbf{h}) = \frac{(12 \sqrt{3})^{-N}}{Z(p)} \prod_{i = 1}^k Z( \ell_{i}), \qquad  \mathbb{P}^{(p)}_{\infty}( \mathcal{H}_{n}= \mathbf{h}) =  \frac{(12 \sqrt{3})^{-N}}{C(p)}  \left( \prod_{i = 1}^k Z( \ell_{i}) \right)   \left(  \sum_{j= 1}^k \frac{C(\ell_{j})}{Z(\ell_{j})} \right) .
\end{equation}
Furthermore, under $ \mathbb{P}^{(p)}$ and conditionally on $\{ \mathcal{H}_{n} = \mathbf{h}\}$, the triangulations filling-in the holes of $ \mathbf{h}$ inside $ \mathbf{t}$ are independent Boltzmann triangulations with boundaries. Also, under $ \mathbb{P}^{(p)}_{\infty}$ and conditionally on $\{ \mathcal{H}_{n} = \mathbf{h}\}$,  the triangulations filling-in the holes of $ \mathbf{h}$ inside $ \mathbf{t}$ are independent, all being Boltzmann triangulations  with boundaries, except for the $J$-th hole which is filled-in with a UIPT of the $\ell_{J}$-gon, where the index $J$ is chosen at random, independently and proportionally to $C(\ell_{\cdot})/Z(\ell_{\cdot})$.
\end{proposition}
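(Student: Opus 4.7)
The plan is to treat the two cases ($\mathbb{P}^{(p)}$ and $\mathbb{P}^{(p)}_{\infty}$) in turn, reducing each to a combinatorial counting argument based on the key observation that the event $\{\mathcal{H}_{n} = \mathbf{h}\}$ depends only on the ``explored'' region. Since the peeling algorithm $\mathcal{A}$ is deterministic and since $\mathcal{H}_{n}(\mathbf{t})$ is rigid, the sequence $\mathcal{H}_{0}(\mathbf{t}), \ldots, \mathcal{H}_{n}(\mathbf{t})$ is a deterministic function of $\mathbf{h} = \mathcal{H}_{n}(\mathbf{t})$; in particular, for any triangulation $\mathbf{t}$ of the $p$-gon, one has $\mathcal{H}_{n}(\mathbf{t}) = \mathbf{h}$ if and only if $\mathbf{h}$ is a sub-triangulation of $\mathbf{t}$. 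By rigidity, the set of $\mathbf{t}$'s satisfying this is in bijection with the tuples $(\mathbf{t}_{1}, \ldots, \mathbf{t}_{k})$ where $\mathbf{t}_{i}$ is a triangulation of the $\ell_{i}$-gon filled inside the $i$-th hole of $\mathbf{h}$.

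For the first identity, I would sum the Boltzmann weights over all such fillings:
\[
\mathbb{P}^{(p)}(\mathcal{H}_{n} = \mathbf{h}) = \frac{1}{Z(p)} \sum_{(\mathbf{t}_{1},\ldots,\mathbf{t}_{k})} (12\sqrt{3})^{-N - n_{1} - \cdots - n_{k}} = \frac{(12\sqrt{3})^{-N}}{Z(p)} \prod_{i=1}^{k} Z(\ell_{i}),
\]
where $n_{i}$ denotes the number of inner vertices of $\mathbf{t}_{i}$, using that the total number of inner vertices of the glued triangulation is $N + \sum n_{i}$ and that the weight factors over holes. The conditional independence statement then follows immediately: the conditional law of $(\mathbf{t}_{1}, \ldots, \mathbf{t}_{k})$ given $\{\mathcal{H}_{n} = \mathbf{h}\}$ is proportional to $\prod_{i} (12\sqrt{3})^{-n_{i}}$, which is exactly the product of independent Boltzmann distributions.

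For the UIPT case, I would use the local-limit definition \eqref{def:uipt} and compute $\mathbb{P}(\mathcal{H}_{n}(T_{m}^{(p)}) = \mathbf{h})$ for the uniform triangulation $T_{m}^{(p)}$ of $\mathcal{T}_{m,p}$, then let $m \to \infty$. Counting gives
\[
\mathbb{P}(\mathcal{H}_{n}(T_{m}^{(p)}) = \mathbf{h}) = \frac{1}{\#\mathcal{T}_{m,p}} \sum_{n_{1}+\cdots+n_{k} = m - N} \prod_{i=1}^{k} \#\mathcal{T}_{n_{i}, \ell_{i}}.
\]
The main analytic step is the asymptotic of the convolution sum as $m \to \infty$: because $\#\mathcal{T}_{n,\ell} \sim C(\ell)(12\sqrt{3})^{n} n^{-5/2}$ is a subexponential sequence, only configurations with one index $n_{j}$ of order $m$ and the others finite contribute to the leading order, and a standard one-big-jump argument yields
\[
\sum_{n_{1}+\cdots+n_{k} = m-N} \prod_{i=1}^{k} \#\mathcal{T}_{n_{i}, \ell_{i}} \sim (12\sqrt{3})^{m-N}(m-N)^{-5/2} \sum_{j=1}^{k} C(\ell_{j}) \prod_{i \ne j} Z(\ell_{i}).
\]
Dividing by $\#\mathcal{T}_{m,p} \sim C(p)(12\sqrt{3})^{m} m^{-5/2}$ and factoring $\prod_{i} Z(\ell_{i})$ out of the sum gives exactly the second formula of \eqref{eq:peellaw}. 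The conditional description is read off the same asymptotic: the index $J$ of the ``large'' hole is selected with probability proportional to $C(\ell_{j})/Z(\ell_{j})$; conditionally, the large filling converges to a UIPT of the $\ell_{J}$-gon (by definition of the UIPT as a local limit), while the other holes, whose fillings have bounded size, become independent Boltzmann triangulations in the limit.

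The main obstacle is the subexponential convolution asymptotic; it is a standard computation but requires some uniform control to justify the interchange of limit and summation (for instance, truncating to configurations where exactly one $n_{j}$ is larger than $m/2$ and bounding the contribution of the rest using $Z(\ell_{i}) < \infty$). Once this is in hand, the extraction of the $\mathbb{P}^{(p)}_{\infty}$-conditional structure from the same asymptotic is routine.
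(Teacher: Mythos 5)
Your proposal follows exactly the paper's route: reduce $\{\mathcal{H}_n = \mathbf{h}\}$ to the event that $\mathbf{t}$ is a rigid filling of $\mathbf{h}$, compute the $\mathbb{P}^{(p)}$ probability by summing Boltzmann weights over independent fillings, and obtain the $\mathbb{P}^{(p)}_\infty$ formula by passing to the local limit of uniform triangulations with the same ``one big hole'' subexponential convolution asymptotic (the paper cites \cite[Lemma 2.5]{AS03} and \cite[Lemma 3.5]{CK15} for the uniform control you flag as the main obstacle). The conditional structure is read off the same computations in both proofs, so this is the paper's argument.
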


In the previous statement, we use the conventions $\prod_{ \varnothing}=1$ and $\sum_{ \varnothing}=0$. In particular, if $ \mathbf{h}$ has no holes, then $ \mathbb{P}^{(p)}( \mathcal{H}_{n} = \mathbf{h}) =(12 \sqrt{3})^{-N}/Z(p)$ and  $ \mathbb{P}^{(p)}_{\infty}( \mathcal{H}_{n}= \mathbf{h}) = 0$.

\begin{proof} The proof is standard, see e.g.~\cite[Proposition 4.12]{AS03} and \cite[Theorem 4]{CLGpeeling}. However, since these references deal with slightly different settings, we give the proof for completeness. Since the peeling algorithm is deterministic and because any triangulation with holes {of the form $ \mathcal{H}_{i}(\mathbf{t})$}  is rigid, the event $\{ \mathcal{H}_{n} = \mathbf{h}\}$ happens if and only if $ \mathbf{t}$ is obtained from $ \mathbf{h}$ by filling-in its holes with certain triangulations $ \mathbf{t}_{1},  \mathbf{t}_{2}, \ldots , \mathbf{t}_{k}$ with boundaries of perimeters $ \ell_{1}, \ell_{2}, \ldots , \ell_{k}$ and number of inner vertices $n_{1}, n_{2}, \ldots , n_{k}$. Since the total number of inner vertices of $ \mathbf{t}$ is then $\sum_{i = 1}^{k} n_{i} + N$, we have
$$ \mathbb{P}^{(p)}( \mathcal{H}_{n} = \mathbf{h}) = \frac{1}{Z(p)} \sum_{n_{1}, \ldots , n_{k} \geq 0} \,\sum_{ \mathbf{t}_{i} \in \mathcal{T}_{n_{i}, \ell_{i}}} (12 \sqrt{3})^{- \sum_{i \geq 1} n_{i} - N}=  \frac{(12 \sqrt{3})^{-N}}{Z(p)} \prod_{i = 1}^{k} Z( \ell_{i}).$$

Now, if $T^{(p)}_{m}$ is a uniform triangulation of the $p$-gon with $m$ inner vertices ($m \geq N$) the same argument holds provided that $N + \sum_{i = 1}^{k} n_{i} =m$,  so that 
$$ \mathbb{P}( \mathcal{H}_{n}(T^{(p)}_{m})= \mathbf{h}) =  \frac{1}{\# \mathcal{T}_{m,p}}  \sum_{N+n_{1}+ \cdots  + n_{k} =m } \,  { \prod_{i=1}^{k}\# \mathcal{T}_{n_{i}, \ell_{i}}}.$$
It is an easy matter to
verify that, for any $\epsilon>0$, we can choose $K$ sufficiently large so that the asymptotic contribution of terms
corresponding to choices of $n_{1}, \ldots , n_{k}$ where $n_{i} \geq K$ for two distinct values of $i \in \{1,2, \ldots,k\} $ is bounded
above by $\epsilon$ (see \cite[Lemma 2.5]{AS03}{, \cite[Lemma 3.5]{CK15}}). Hence, using  \eqref{eq:tnpexact}{,the definition of $Z(p)$}  and the convergence \eqref{def:uipt}, we get that $\mathbb{P}^{(p)}_{\infty}( \mathcal{H}_{n} = \mathbf{h}) $ is equal to 
{ 
 \begin{eqnarray*} \lim_{m \to \infty}\mathbb{P}( \mathcal{H}_{n}(T^{(p)}_{m})= \mathbf{h}) &=&  (12 \sqrt{3})^{-N} \cdot \lim_{m \to \infty}  \frac{(12 \sqrt{3})^{m}}{\# \mathcal{T}_{m,p}}   \sum_{n_{1}+ \cdots  + n_{k} =m-N } \,   \prod_{j=1}^{k} (12 \sqrt{3})^{-n_{j}}\# \mathcal{T}_{n_{j}, \ell_{j}} \\
  &\underset{ \eqref{eq:tnpexact}}{=}& \frac{(12 \sqrt{3})^{-N}}{C(p)} \cdot \sum_{j=1}^{k} C(\ell_{j}) \prod_{i \neq j}  \left(  \sum_{n=0}^{\infty}   (12 \sqrt{3})^{-n}\# \mathcal{T}_{n, \ell_{i}} \right) \\
  &=&   \frac{(12 \sqrt{3})^{-N}}{C(p)} \cdot \sum_{j=1}^{k} C(\ell_{j}) \prod_{i \neq j} Z(\ell_{i}).
  \end{eqnarray*}
 This completes the proof.
 }
 \end{proof}

\begin{remark} The above proposition remains true when the peeling algorithm may use a source of randomness, as long as the latter is independent of the underlying random triangulation. Examples of such randomized peeling algorithms that have been used in the literature include peeling along percolation interfaces \cite{Ang03,ACpercopeel}, peeling along random walks \cite{BCsubdiffusive}, or  peeling along SLE$_{6}$ interfaces \cite{CurKPZ}. However, in this work, we focus on deterministic peelings.
\end{remark}

We will use the following extension of Proposition \ref{prop:peelinggenerallaw} at stopping times, where we keep the same notation as in the latter:

\begin{corollary}\label{cor:peelinggenerallaw} Let $\tau$ be a $( \mathcal{F}_{n})_{n \geq 0}$ stopping time and let $\mathbf{h}$ be a triangulation with holes. If $\mathbb{P}^{(p)}( \mathcal{H}_{\tau}= \mathbf{h}, \tau<\infty) >0$, then the first part of \eqref{eq:peellaw} holds when $\mathbb{P}^{(p)}( \mathcal{H}_{n} = \mathbf{h}) $ is  replaced by $\mathbb{P}^{(p)}( \mathcal{H}_{\tau} = \mathbf{h}, \tau<\infty)$. Also, if $\mathbb{P}^{(p)}_{\infty}( \mathcal{H}_{\tau}= \mathbf{h}, \tau<\infty) >0$, then the second part of \eqref{eq:peellaw} holds when $\mathbb{P}^{(p)}_{\infty}( \mathcal{H}_{n} = \mathbf{h}) $ is  replaced by $\mathbb{P}^{(p)}_{\infty}( \mathcal{H}_{\tau} = \mathbf{h}, \tau<\infty)$.
\end{corollary}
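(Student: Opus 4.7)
The plan is to reduce the stopping-time statement to the fixed-time statement of Proposition~\ref{prop:peelinggenerallaw} by exploiting the fact that the peeling algorithm $\mathcal{A}$ is \emph{deterministic}. I would first decompose
$$\{\mathcal{H}_\tau = \mathbf{h},\, \tau < \infty\} = \bigsqcup_{m \geq 0} \{\mathcal{H}_m = \mathbf{h},\, \tau = m\}$$
and argue that only one term on the right-hand side can be non-empty.

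The crucial observation is that, because $\mathcal{A}$ is deterministic and the initial state $\mathcal{H}_0$ (the boundary of length $p$) is fixed, the entire history $(\mathcal{H}_0, \mathcal{H}_1, \ldots, \mathcal{H}_m)$ can be reconstructed from $\mathcal{H}_m$ alone: iteratively, one applies $\mathcal{A}$ to the current state and reads off the next step by looking at which triangle is incident to the peeled edge inside $\mathcal{H}_m$. In particular $\mathcal{F}_m = \sigma(\mathcal{H}_0, \ldots, \mathcal{H}_m) = \sigma(\mathcal{H}_m)$, and for a given triangulation with holes $\mathbf{h}$ the number of peeling steps needed to reach it is a well-defined integer $n = n(\mathbf{h})$. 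Thus $\{\mathcal{H}_m = \mathbf{h}\}$ has positive probability for at most one value of $m$, namely $m = n$.

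Since $\{\tau = n\} \in \mathcal{F}_n = \sigma(\mathcal{H}_n)$, the indicator $\mathbf{1}_{\{\tau = n\}}$ is a function of $\mathcal{H}_n$, so on the event $\{\mathcal{H}_n = \mathbf{h}\}$ either $\tau = n$ always holds or it never does. If $\mathbb{P}^{(p)}(\mathcal{H}_\tau = \mathbf{h},\, \tau < \infty) > 0$, the only nontrivial term in the above disjoint union is $\{\mathcal{H}_n = \mathbf{h},\, \tau = n\}$, and the first alternative must occur, so
$$\mathbb{P}^{(p)}(\mathcal{H}_\tau = \mathbf{h},\, \tau < \infty) = \mathbb{P}^{(p)}(\mathcal{H}_n = \mathbf{h}).$$
Proposition~\ref{prop:peelinggenerallaw} applied to the right-hand side yields the announced formula. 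The same reasoning applies verbatim under $\mathbb{P}^{(p)}_\infty$.

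There is essentially no obstacle here: all the enumerative work was already done in Proposition~\ref{prop:peelinggenerallaw}, and the only conceptual point is the identification $\mathcal{F}_n = \sigma(\mathcal{H}_n)$, which is where the determinism of $\mathcal{A}$ is used. It is precisely for this reason that the paper restricts to deterministic peeling algorithms.
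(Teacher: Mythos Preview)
Your proof is correct and follows the same route as the paper's: both use the determinism of $\mathcal{A}$ (so that $\mathcal{F}_n=\sigma(\mathcal{H}_n)$) to identify a deterministic integer $k=k(\mathbf{h})$ with $\{\mathcal{H}_\tau=\mathbf{h},\,\tau<\infty\}=\{\mathcal{H}_k=\mathbf{h}\}$, and then invoke Proposition~\ref{prop:peelinggenerallaw} at that fixed time. One cosmetic caveat: your intermediate claim that $\{\mathcal{H}_m=\mathbf{h}\}$ is nonempty for at most one $m$ is literally false when $\mathcal{A}(\mathbf{h})=\dagger$ (the exploration then freezes and $\mathcal{H}_m=\mathbf{h}$ for all $m\geq n$), but since those events all coincide the conclusion $\mathbb{P}^{(p)}(\mathcal{H}_\tau=\mathbf{h},\,\tau<\infty)=\mathbb{P}^{(p)}(\mathcal{H}_n=\mathbf{h})$ is unaffected.
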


\begin{proof} 
Since the peeling algorithm is deterministic, for every $k \geq 0$, there exists a set $ \mathbb{S}_{k}$ of triangulations of the $p$-gon such that $\tau=k$ if and only if $ \mathcal{H}_{k}  \in \mathbb{S}_{k}$. In particular, there exists an integer $k$, depending only on $ \mathbf{h}$ (and the peeling algorithm $ \mathcal{A}$), such that $\{ \mathcal{H}_{\tau} = \mathbf{h}, {\tau<\infty}\} = \{ \mathcal{H}_{k} = \mathbf{h}\}$. It then suffices to apply Proposition \ref{prop:peelinggenerallaw} with this integer. 
 \end{proof}

\paragraph{One-step peeling transitions.} Proposition \ref{prop:peelinggenerallaw} entirely describes the law of a branching peeling process on random Boltzmann triangulations of the $p$-gon. However, it will be useful in the sequel to understand the one-step probability transitions during this peeling process. The proof is an easy consequence of Proposition \ref{prop:peelinggenerallaw} and is left to reader.\label{sec:onestepboltz}

If $n \geq 0$, denote by $ \mathcal{L}_{n}$ the perimeter of the cycle of  $\mathcal{H}_{n}$ to which belongs $ \mathcal{A}(\mathcal{H}_{n})$ (with the convention $ \mathcal{L}_{n}=0$ if $\mathcal{A}(\mathcal{H}_{n})= \dagger$). Then, for every $p \geq 1$ and $n \geq 0$, under $\P^{(p)}$ and conditionally on $ \mathcal{F}_{n}$ and on the event $ \{ \mathcal{A}(\mathcal{H}_{n}) \neq \dagger\}$ (which belongs to $ \mathcal{F}_{n}$), the events $ \mathsf{C}, \mathsf{G}_{k}$ (with $k \in \{0, \ldots,  \mathcal{L}_{n}-1\}$) or $ \mathsf{V}$ occur with the following probabilities:
$$  \mathbb{P}^{(p)}( \mathsf{C} \mid  \mathcal{F}_{n})   = b_{-1}^{( \mathcal{L}_{n})}, \qquad  \mathbb{P}^{(p)}( \mathsf{G}_{k} \mid \mathcal{F}_{n})= b^{( \mathcal{L}_{n})}_{k} , \quad \qquad \mathbb{P}^{(p)}( \mathsf{V} \mid \mathcal{F}_{n})= b^{(2)}_{\emptyset} \mathbbm{1}_{ \mathcal{L}_{n}=2},$$
where, for $m \geq 1$ and $0 \leq k \leq m-1$,
$$b_{-1}^{(m)}    \coloneqq     \frac{1}{12 \sqrt{3}} \frac{Z(m+1)}{Z(m)}, \qquad  b_{k}^{(m)}   \quad := \quad   \frac{Z(k+1)Z(m-k)}{Z(m)}, \qquad b_{\emptyset}^{(2)} \coloneqq  \frac{1}{Z(2)}.$$

Furthermore, as in Proposition \ref{prop:peelinggenerallaw}, conditionally on any of the above cases, the finite triangulations with boundaries that fill-in the new holes created by the peeling step are independent Boltzmann triangulations with boundaries. We will also use the limit of these transitions probabilities as $m \to \infty$:  \begin{eqnarray} \label{def:qk} q_{-1} \quad  :=  \quad  \lim_{m \to \infty} b_{-1}^{(m)} \underset{ \eqref{eq:asympZp}}{=} \frac{1}{ \sqrt{3}}, \quad \mbox{ and } \quad q_{-k}  \coloneqq  \lim_{m \to \infty} b_{k}^{(m)} \underset{ \eqref{eq:asympZp}}{=} 12^{-k} Z(k+1).  \end{eqnarray}
These quantities correspond to the one-step transition probabilities in the Uniform Infinite Half-Planar Triangulation (of type I), see \cite[Section 2.3.1]{ACpercopeel}.

A key element that we will use in the proof of Proposition \ref{prop:scalingllcycle} is that the average ``change of  boundary length'' during a peeling step in the infinite half-plane model is zero (see \cite[Remark after Proposition 3]{ACpercopeel}), that is
  \begin{equation} \label{eq:variation=0}
q_{-1}+ 2 \sum_{k=0}^\infty q_{k}=1, \qquad   q_{-1} - 2 \sum_{k=0}^\infty k q_{k} =0.
  \end{equation}
Finally, we refer to \cite[Section~3.1]{CLGpeeling} the reader interested in knowing the one-step peeling transitions inside the cycle disconnecting the external boundary from infinity in the UIPT.

\subsection{Two martingales}
\label{sec:martingales}

Here we present two useful martingales that appear in any deterministic peeling of a Boltzmann triangulation with a boundary. Roughly speaking, the first one, called the \emph{volume martingale} involves the sum of the squares {of the lengths} of the cycles and is the expected size of the full triangulation conditionally given the current stage of the peeling exploration. However, the second martingale, called the \emph{cycle martingale}, involves the sum of the cubes {of the lengths} of the cycles and has a less obvious geometric meaning. We mention that the cycle martingale has already appeared in \cite[Theorem 4]{CLGpeeling} for a specific peeling algorithm. In this work, we will use this martingale to control the $ \ell^{3}$ norm of the cycles appearing in branching peeling explorations of Boltzmann triangulations, which, in turn, will be later useful for the cutoff argument.

 In the sequel, we fix $p \geq 1$ and recall that for every $n \geq 0$, $\mathcal{F}_{n}$ is the filtration generated by $\mathcal{H}_{0}, \ldots , \mathcal{H}_{n}$ on the set of all triangulations with holes of the $p$-gon. For every $n \geq 0$, we let $\boldsymbol{\ell}({n})= (\ell_{1}(n), \ldots , \ell_{i} (n), \ldots )$ be the perimeters of the cycles of $ \mathcal{H}_{n}$ enumerated in a deterministic fashion (here and after the dependence in $p$ is implicit).  Note that $ \boldsymbol{\ell}(0)=p$.
 
\paragraph{The volume martingale.} 

Set $$g(1)=1+ \frac{2}{\sqrt{3}}, \qquad  g(p) =  \frac{1}{3}(2p-3)(2p-1), \qquad  p \geq 2.$$If $ \mathbf{h}$ is a triangulation with holes, we denote by $ | \mathbf{h}|$ the number of inner vertices of $ \mathbf{h}$ (that do not belong to the external boundary but may belong to cycles). It turns out that $g(p)$ is the expected number of internal vertices in a Boltzmann triangulation of the $p$-gon:

\begin{proposition}\label{prop:g} For every $p \geq 1$, we have
$g(p) =  \Esp{| \mathbf{t}|}$.
\end{proposition}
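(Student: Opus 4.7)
My plan is to prove the identity $g(p) = \mathbb{E}^{(p)}[|\mathbf{t}|]$ as an explicit moment computation for the Boltzmann distribution. Starting from
\begin{equation*}
\mathbb{E}^{(p)}[|\mathbf{t}|] = \frac{1}{Z(p)}\sum_{n \geq 0} n \,(12\sqrt{3})^{-n}\, \#\mathcal{T}_{n,p},
\end{equation*}
finiteness of the right-hand side is immediate from the tail asymptotic $\#\mathcal{T}_{n,p}(12\sqrt{3})^{-n} \sim C(p)\,n^{-5/2}$ recalled in \eqref{eq:tnpexact}. One natural way to evaluate this sum in closed form is to introduce the tilted partition function $Z_w(p) \coloneqq \sum_n w^{-n}\,\#\mathcal{T}_{n,p}$ for $w$ in a neighborhood of $12\sqrt{3}$, so that $Z(p) = Z_{12\sqrt{3}}(p)$ and
\begin{equation*}
\mathbb{E}^{(p)}[|\mathbf{t}|] = \left.-\,\frac{w\,\partial_w Z_w(p)}{Z_w(p)}\right|_{w=12\sqrt{3}}.
\end{equation*}
A standard Tutte-type equation (cf.~Krikun \cite{Kri07}) yields the bivariate generating function $\sum_{p\geq 0} Z_w(p+1)\,y^p$ as an explicit one-parameter deformation of the expression stated for $w=12\sqrt{3}$ just after \eqref{eq:defzp}; differentiating in $w$ at $w = 12\sqrt{3}$ and extracting the coefficient of $y^{p-1}$ then produces the claimed formula after algebraic simplification.

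An alternative route that fits more naturally with the rest of the paper is via a peeling recurrence. Applying any deterministic peeling algorithm to a Boltzmann triangulation of the $p$-gon and conditioning on the outcome of the first peeling step (using the one-step transitions recalled in Section~\ref{sec:onestepboltz}) yields, for $p \geq 3$,
\begin{equation*}
g(p) = b_{-1}^{(p)}\bigl(g(p+1)+1\bigr) + \sum_{k=0}^{p-1} b_k^{(p)}\bigl(g(k+1)+g(p-k)\bigr),
\end{equation*}
together with the analogous equations for $p \in \{1,2\}$ (the $\mathsf{V}$-event at $p=2$ contributes zero because the trivial triangulation has no inner vertex). The $+1$ in the first term accounts for the new vertex discovered in a $\mathsf{C}$-event, and the sum encodes the branching property of Proposition~\ref{prop:peelinggenerallaw}: after a $\mathsf{G}_k$-event, the two resulting holes are filled by independent Boltzmann triangulations whose inner vertices are precisely the inner vertices of the original map in these regions. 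Combined with the a priori bound $g(p)=O(p^2)$ (which follows from \eqref{eq:tnpexact} and dominated convergence), this relation characterizes $(g(p))_{p \geq 1}$ uniquely, so it suffices to verify by direct substitution, using the explicit values \eqref{eq:defzp} and the definitions of $b_{-1}^{(p)}$ and $b_k^{(p)}$, that the candidate $g(p)=(2p-3)(2p-1)/3$ for $p \geq 2$ together with $g(1) = 1 + 2/\sqrt{3}$ solves the recurrence.

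The anticipated main obstacle is purely algebraic bookkeeping rather than conceptual. The case $p=1$ must be treated separately because of the exceptional value $Z(1) = (2-\sqrt{3})/4$, which is precisely what forces the $\sqrt{3}$ into the expression $g(1) = 1 + 2/\sqrt{3}$; a short computation with $a := g(1)$ shows that the $p = 1$ recurrence collapses to $a\,\sqrt{3}/2 = (2+\sqrt{3})/2$. The bulk-case $p\geq 3$ verification reduces to identities on the ratios $Z(k+1)Z(p-k)/Z(p)$, for which \eqref{eq:asympZp}--\eqref{eq:sumkZp} strongly suggest that the relevant sums telescope cleanly.
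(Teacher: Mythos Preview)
Your first route---differentiating a tilted partition function using Krikun's explicit generating functions---is precisely what the paper does: it sets $W_p(x)=\sum_n \#\mathcal T_{n,p}\,x^n$, writes $\Esp{|\mathbf t|}=xW_p'(x)/W_p(x)$ at $x=(12\sqrt 3)^{-1}$, and evaluates using the parametrization $8h^3x^2-h^2+x^2=0$ from \cite{Kri07}.

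Your second route, via the one-step peeling recurrence
\[
g(p)=b_{-1}^{(p)}\bigl(1+g(p+1)\bigr)+\sum_{k=0}^{p-1}b_k^{(p)}\bigl(g(k+1)+g(p-k)\bigr),
\]
is a genuinely different and more probabilistic argument, well-suited to the spirit of the surrounding sections. Two points deserve to be made explicit. First, the uniqueness claim (``recurrence $+$ $O(p^2)$ growth determines $g$'') requires a one-line justification: the homogeneous recurrence is solved by $f(p)=C(p)/Z(p)$ (this is exactly the one-step martingale identity for the cycle martingale of Proposition~\ref{prop:martingale}, which follows directly from Proposition~\ref{prop:peelinggenerallaw}), the solution space is one-dimensional because each equation determines $h(p+1)$ from $h(1),\ldots,h(p)$, and $f(p)\sim c\,p^3$, so the only $O(p^2)$ homogeneous solution is zero. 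Second, the verification that the candidate $(2p-3)(2p-1)/3$ satisfies the recurrence for all $p\ge 3$ is the real work in this approach; your checks at $p=1,2$ are correct, but ``the relevant sums telescope cleanly'' is an expectation rather than an argument, and in practice this identity (involving $\sum_k Z(k+1)Z(p-k)(2k-1)(2k+1)$) takes some care. The payoff of your approach is that it avoids importing the algebraic parametrization from \cite{Kri07} and instead uses only the peeling transitions already available in the paper; the paper's approach, conversely, gets the closed form in one stroke without a separate uniqueness step.
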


\begin{proof}We apply results of Krikun \cite{Kri07}. Set $W(x,y)= \sum_{p,n \geq 0}  \# \mathcal{T}_{n,p} x^{n} y^{p}$ and $W_{p}(x)= [y^{p}] W(x,y)$, so that $\Esp{| \mathbf{t}|}=x W'_{p}(x)/W_{p}(x) \big|_{x=r_{c}}$ with $r_{c}=(12 \sqrt{3})^{-1}$. In particular, $W_{p}(r_{c})=Z(p)$. Using the notation of \cite[Section~2.1]{Kri07} (Krikun uses the number of edges as size parameter; to translate his formulas we use the fact that if a triangulation of the
$p$-gon has $n$ inner vertices then by Euler's formula it has $3n + 2p - 3$ edges), we have $W(x^{3},y)=x^{3}U_{0}(x,y/x^{2})$. Then, letting $h=h(x)$ be the positive power series such that $8h^{3} x^{2}-h^{2}+x^{2}=0$, by the display between (19) and (20) in \cite{Kri07} (note the sign error in \cite{Kri07} for $W_{1}$) we have
$$W_{1}(x^{3})= \frac{1}{2}- \frac{1+2h^{3}}{2h} x, \qquad W_{p+2}(x^{3})= \frac{1}{x^{2p+1}} \cdot \frac{1}{p+1} \binom{2p}{p} \left( 1- \frac{4p+2}{p+2} h^{3} \right) h^{2p+1} \qquad  (p \geq 0).$$
Note that  $\Esp{| \mathbf{t}|}= \left(\frac{d}{dx} W_{p}(x^{3}) \right)   \big|_{x=r_{c}^{1/3}} \cdot {r_{c}^{1/3}}/(3 Z(p))$. The result then readily follows, by using the fact that $h(r_{c}^{1/3})=2^{-2/3}$ and that $h'(x)(4h(x)^{3}-1) \rightarrow  -3\sqrt{3}$ as $x \rightarrow r^{1/3}_{c}$ (this can for instance be seen by noting that by \cite[Eq.~(6)]{Kri07}, $[x^{3k+1}] h \sim (2 \pi)^{-1/2} \cdot k^{-3/2} \cdot r_{c}^{-k}$ as $k \rightarrow \infty$ and applying Tauberian theorems, which yield $(h(x)-h(r_{c}^{1/3}))/({r_{c}^{1/3}-x})^{1/2} \rightarrow -2^{-1/6}\cdot 3^{1/4}$ and $h'(x) \cdot (r_{c}^{1/3}-x)^{1/2} \rightarrow 2^{-5/6} \cdot 3^{1/4}$ as $x \rightarrow  r_{c}^{1/3}$). We leave the details to the reader.
\end{proof}

\begin{proposition} \label{prop:martingalel2} Under $ \mathbb{P}^{(p)}$, the process $(V_{n})_{n \geq 0}$ defined by $$V_{n} = | \mathcal{H}_{n}| + \sum_{i\geq 1} g\big( \ell_{i}(n)\big), \qquad  n \geq 0,$$ is a nonnegative  $( \mathcal{F}_{n})_{n \geq 0}$ uniformly integrable martingale with $V_{0}=g(p)$.
\end{proposition}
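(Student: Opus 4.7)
The plan is to identify $V_n$ as the conditional expectation $\mathbb{E}^{(p)}[|\mathbf{t}| \mid \mathcal{F}_n]$, where $\mathbf{t}$ denotes the underlying Boltzmann triangulation of the $p$-gon under $\mathbb{P}^{(p)}$ and $|\mathbf{t}|$ is its number of inner vertices. Once this identification is established, the martingale property, nonnegativity and uniform integrability all follow immediately from Doob's theorem on closed martingales, since $|\mathbf{t}|$ is nonnegative and $\mathbb{E}^{(p)}[|\mathbf{t}|] = g(p) < \infty$ by Proposition \ref{prop:g}.

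First, I would check the initial condition: by construction $\mathcal{H}_0$ is the boundary alone, so $|\mathcal{H}_0| = 0$ and there is a unique cycle of perimeter $p$, yielding $V_0 = g(p)$, which agrees with $\mathbb{E}^{(p)}[|\mathbf{t}|]$ thanks to Proposition \ref{prop:g}.

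The heart of the proof is the identification at general $n$. Proposition \ref{prop:peelinggenerallaw} says that, conditionally on $\mathcal{F}_n$, the full triangulation $\mathbf{t}$ is recovered from $\mathcal{H}_n$ by gluing independent Boltzmann triangulations $\mathbf{t}_1, \mathbf{t}_2, \ldots$ of respective perimeters $\ell_1(n), \ell_2(n), \ldots$ inside the holes. Since the vertices of each cycle lie on the external boundary of the corresponding filling, the counting decomposes without double-counting as
\[
|\mathbf{t}| \;=\; |\mathcal{H}_n| + \sum_{i \geq 1} |\mathbf{t}_i|.
\]
Taking conditional expectations given $\mathcal{F}_n$, using the conditional independence of the $\mathbf{t}_i$ and applying Proposition \ref{prop:g} to each of them, I obtain
\[
\mathbb{E}^{(p)}\!\left[|\mathbf{t}| \,\big|\, \mathcal{F}_n\right] \;=\; |\mathcal{H}_n| + \sum_{i \geq 1} g\!\left(\ell_i(n)\right) \;=\; V_n.
\]
Since $|\mathbf{t}|$ is an $\mathbb{P}^{(p)}$-integrable random variable (with mean $g(p)$), the sequence $(V_n)_{n \geq 0}$ is a nonnegative uniformly integrable $(\mathcal{F}_n)$-martingale by the tower property, and $V_0 = g(p)$ is recovered.

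I do not anticipate a real obstacle here: the argument is a direct combination of Propositions \ref{prop:peelinggenerallaw} and \ref{prop:g}, essentially saying that $V_n$ is, by design, the best guess for $|\mathbf{t}|$ given the information revealed by the first $n$ peeling steps. The only point requiring care is that the identification is valid uniformly in $n$, regardless of whether the peeling algorithm eventually returns $\dagger$: if the exploration stops at some (random) time $N$, then $\mathcal{H}_n = \mathcal{H}_N$ and hence $V_n = V_N$ for all $n \geq N$, consistent with the fact that $\mathcal{F}_n = \mathcal{F}_N$ and the conditional expectation stabilizes.
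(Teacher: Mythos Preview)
Your proof is correct and follows exactly the same approach as the paper: both identify $V_n$ with $\mathbb{E}^{(p)}[\,|\mathbf{t}|\mid \mathcal{F}_n\,]$ via Proposition~\ref{prop:peelinggenerallaw} and Proposition~\ref{prop:g}, and deduce the uniformly integrable martingale property from this representation. Your write-up is simply more detailed than the paper's one-line argument.
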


\begin{proof} 
Using Proposition \ref{prop:g}, simply observe that $V_{n}=\Esp{ | \mathbf{t}|  \mid \mathcal{F}_{n}}$ for every $n \geq 0$. Indeed, the description of the branching peeling process in Section~\ref{sec:bpp} shows that $\Esp{ | \mathbf{t}|  \mid \mathcal{F}_{n}}$ is equal to $ |\mathcal{H}_{n}|$ plus the sum of the expected values, conditionally given $\mathcal{H}_{n}$, of the number of inner vertices present in each of the holes of $ \mathcal{H}_{n}$, which is exactly $\sum_{i \geq 1} g( \ell_{i}(n))$. It follows in particular that $V_{n}$ is a uniformly integrable martingale. \end{proof}

\paragraph{The cycle martingale.}
Recall the definition of $C(\cdot)$ and $Z(\cdot)$ from Section~\ref{sec:enumeration} and set:
 \begin{eqnarray} \label{def:f} f(1):= \frac{C(1)}{Z(1)}= \frac{\sqrt{2}(2+\sqrt{3})}{3 \sqrt{\pi}} \qquad  \textrm{and} \qquad  f(p) := \frac{C(p)}{Z(p)} =  \frac{\sqrt{6}}{9\sqrt{\pi}} \cdot p(2p-1)(2p-3), \qquad  p \geq 2,  \end{eqnarray}
with $f(0)=C(0)/Z(0)=0$ by convention.

\begin{proposition} \label{prop:martingale}  Under $ \mathbb{P}^{(p)}$, the process $(M_{n})_{n \geq 0}$ defined by 
\begin{equation}
\label{eq:M}M_{n} = \sum_{i\geq 1} f\big( \ell_{i}{(n)}\big), \qquad  n \geq 0,
\end{equation} is a nonnegative $( \mathcal{F}_{n})_{n \geq 0}$ martingale called the \emph{cycle martingale} with $M_{0}=f(p)$.

\end{proposition}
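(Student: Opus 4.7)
The plan is to identify $M_n/f(p)$ as the Radon--Nikodym derivative of $\mathbb{P}^{(p)}_\infty$ with respect to $\mathbb{P}^{(p)}$ on the $\sigma$-field $\mathcal{F}_n$, and then to invoke the classical fact that such consistent densities automatically form a nonnegative martingale under the reference measure. This is natural because Proposition~\ref{prop:peelinggenerallaw} already provides both laws in essentially the same form, differing only by the factor $\sum_j C(\ell_j)/Z(\ell_j)$ in the numerator and $C(p)/Z(p)$ in the denominator.

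Concretely, I would fix $n \geq 0$ and take any triangulation with holes $\mathbf{h}$ of perimeter $p$ with positive $\mathbb{P}^{(p)}$-mass, writing $\ell_1,\ldots,\ell_k$ for the perimeters of its cycles. Taking the ratio of the two formulas in \eqref{eq:peellaw} and using $f(\ell)=C(\ell)/Z(\ell)$ gives
\begin{equation*}
\mathbb{P}^{(p)}_\infty(\mathcal{H}_n=\mathbf{h}) \;=\; \frac{M_n(\mathbf{h})}{f(p)}\cdot \mathbb{P}^{(p)}(\mathcal{H}_n=\mathbf{h}).
\end{equation*}
When $\mathbf{h}$ has no holes, both sides vanish: $\mathbb{P}^{(p)}_\infty$ assigns mass $0$ to finite triangulations, and $M_n$ is an empty sum. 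Summing this elementary identity over all $\mathbf{h}$ compatible with an event $A\in\mathcal{F}_n$ yields
\begin{equation*}
\mathbb{P}^{(p)}_\infty(A) \;=\; \frac{1}{f(p)}\,\mathbb{E}^{(p)}\!\left[M_n\mathbbm{1}_A\right], \qquad A\in\mathcal{F}_n,
\end{equation*}
which exhibits $M_n/f(p)$ as the density of $\mathbb{P}^{(p)}_\infty|_{\mathcal{F}_n}$ with respect to $\mathbb{P}^{(p)}|_{\mathcal{F}_n}$.

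Since $(\mathcal{F}_n)_{n\geq 0}$ is increasing, the previous display applied at levels $n$ and $n+1$ for $A\in\mathcal{F}_n\subset\mathcal{F}_{n+1}$ gives $\mathbb{E}^{(p)}[M_{n+1}\mathbbm{1}_A]=f(p)\,\mathbb{P}^{(p)}_\infty(A)=\mathbb{E}^{(p)}[M_n\mathbbm{1}_A]$, hence $\mathbb{E}^{(p)}[M_{n+1}\mid\mathcal{F}_n]=M_n$. Nonnegativity is immediate from $f\geq 0$, and $M_0=f(p)$ holds because $\mathcal{H}_0$ consists of a single cycle of length $p$. There is no real obstacle: the only subtle point is the absolute continuity needed to read \eqref{eq:peellaw} as a density, which is transparent since every triangulation with holes reachable under the UIPT peeling is \emph{a fortiori} reachable under the Boltzmann peeling (both formulas share the same support of rigid, reachable triangulations with holes).
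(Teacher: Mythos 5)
Your proof is correct and takes essentially the same route as the paper: read \eqref{eq:peellaw} as saying $M_n/f(p)$ is the Radon--Nikodym derivative of $\mathbb{P}^{(p)}_\infty$ with respect to $\mathbb{P}^{(p)}$ on $\mathcal{F}_n$, then deduce the martingale property from consistency of these densities along the increasing filtration. The only step you pass over silently is why an arbitrary $A\in\mathcal{F}_n$ is a disjoint union of atoms $\{\mathcal{H}_n=\mathbf{h}\}$ — i.e.\ why $\mathcal{F}_n=\sigma(\mathcal{H}_n)$ even though $\mathcal{F}_n$ is defined via $(\mathcal{H}_0,\ldots,\mathcal{H}_n)$ — which holds because rigidity and the determinism of $\mathcal{A}$ make the earlier $\mathcal{H}_j$ a function of $\mathcal{H}_n$; the paper makes this observation explicit before computing.
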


\begin{proof} By Proposition \ref{prop:peelinggenerallaw}, we see that $M_{n}$ is $f(p)$ times the Radon-Nikodym derivative of the law of $ \mathcal{H}_{n}$ under $\mathbb{P}^{(p)}_{\infty}$ with respect to the law of $ \mathcal{H}_{n}$ under $\mathbb{P}^{(p)}$: 
 \begin{eqnarray} \label{eq:radonnikodym} M_{n} =  {f(p)} \cdot \frac{\mathbb{P}^{(p)}_{\infty} (  \mathcal{H}_{n} = \mathbf{h})}{ \mathbb{P}^{(p)} (  \mathcal{H}_{n} = \mathbf{h})}
 \qquad \hbox{on the event } \{ \mathcal{H}_{n} = \mathbf{h}\}. \end{eqnarray}
 
Since $ \mathcal{H}_{n}$ may be recovered in a deterministic way from $ \mathcal{H}_{n+1}$, this entails that $(M_{n})_{n \geq 0}$ is a $( \mathcal{F}_{n})_{n \geq 0}$ martingale. Indeed, fix $n \geq 0$ and observe that since the peeling algorithm $ \mathcal{A}$ is deterministic, for every triangulation with holes $\mathbf{h}_{n+1}$, there exist triangulations with holes $\mathbf{h}_{0},\mathbf{h}_{1}, \ldots,  \mathbf{h}_{n}$ such that for every triangulation with a boundary $\mathbf{t}$, $\mathcal{H}_{n+1}(\mathbf{t})=\mathbf{h}_{n+1}$ if and only if $ \mathcal{H}_{i}(\mathbf{t}) = \mathbf{h}_{i}$ for ever $0 \leq i \leq n$. In particular,  if $\phi$ is a nonnegative measurable function on the space of triangulations with holes, there exists another nonnegative measurable function $\psi$ such that $\psi( \mathcal{H}_{n+1}(\mathbf{t}))=\phi( \mathcal{H}_{n}(\mathbf{t}))$ for every triangulation $\mathbf{t}$ with a boundary. As a consequence, we have $$ \mathbb{E}^{(p)}[\phi( \mathcal{H}_{n}) \cdot M_{n+1}]  = \mathbb{E}^{(p)}[\psi( \mathcal{H}_{n+1}) \cdot M_{n+1}] \underset{\eqref{eq:radonnikodym}}{=} {f(p)} \mathbb{E}^{(p)}_{\infty}[\psi( \mathcal{H}_{n+1})] = {f(p)} \mathbb{E}^{(p)}_{\infty}[\phi( \mathcal{H}_{n})]  \underset{\eqref{eq:radonnikodym}}{=} \mathbb{E}^{(p)}[ \phi( \mathcal{H}_{n}) \cdot M_{n}],$$
so that $ \Es{ M_{n+1} | \mathcal{F}_{n}}= M_{n}$.
\end{proof} 

\noindent{\bf Remark.} The local absolute continuity \eqref{eq:radonnikodym} of the law of the UIPT with respect to Boltzmann triangulation can also be interpreted as follows.
Denote by ${\mathscr H}_{n,p}$  the space of triangulations with holes of the $p$-gon which arise after $n$ steps of peeling, and then ${\mathscr H}^*_{n,p}$
for the space of pairs $(\mathbf{h},c)$ with $\mathbf{h}\in{\mathscr H}_{n,p}$ and $c$ a cycle of $\mathbf{h}$. We think of $(\mathbf{h},c)$ as a triangulation with holes having one marked cycle.
We then set 
$${\mathbb Q}_{n,p}((\mathbf{h},c))\coloneqq \frac{f(|c|)}{f(p)}\sum_i \P^{(p)}(\mathcal{H}_{n}=\mathbf{h}, {\mathcal C}_{n,i}=c),  \qquad(\mathbf{h},c)\in{\mathscr H}^*_{n,p},$$
where  $\{{\mathcal C}_{n,i}: i=1, \ldots\}$ denotes the family of the cycles of $\mathcal{H}_{n}$. 
Because $\E^{(p)}[M_n]=f(p)$, it follows that ${\mathbb Q}_{n,p}$ defines a probability measure on ${\mathscr H}^*_{n,p}$, and then \eqref{eq:radonnikodym} enables us to identify 
${\mathbb Q}_{n,p}$ as the distribution of the triangulation with holes obtained after $n$ steps of peeling under $\P^{(p)}_{\infty}$ (that is for the UIPT), where the marked cycle is the one corresponding to the infinite end of the UIPT. This observation, together with the fact that $M_n$ is a martingale under $\mathbb{P}^{(p)}$, and the description of the filled-in holes for the UIPT  in Proposition \ref{prop:peelinggenerallaw}
are close relatives to the famous spine decomposition for branching processes; see \cite{LPP}. 

\bigskip

Let us draw a couple of important facts using this cycle martingale. First notice that this cycle martingale is \emph{not} necessarily uniformly integrable.  Indeed, consider a peeling algorithm $ \mathcal{A}$ such that $ \mathcal{A}(\mathbf{h}) \neq \dagger$ if $\mathbf{h}$ has at least one hole, so that $ \mathcal{H}_{n}(\mathbf{t})=\mathbf{t}$ for every $n$ sufficiently large if $\mathbf{t}$ is a finite triangulation of the $p$-gon. Hence, under $ \mathbb{P}^{(p)}$, we have $ \lim_{n \to \infty} M_{n} = 0$ almost surely, so that in particular  $(M_{n})_{n \geq 0}$ is not uniformly integrable.

 In the sequel, we shall need to calculate the expectation of the cycle martingale evaluated at certain (unbounded) stopping times, which will typically be of the form $\min \{k \geq 0; \mathcal{A}( \mathcal{H}_{k})=\dagger  \}$. Specifically, let $\tau$ be a $( \mathcal{F}_{n})_{n \geq 0}$ stopping time taking values in $\{0,1,2,\ldots\} \cup \{ \infty\}$ which is almost surely finite under $\mathbb{P}^{(p)}$. Using  Corollary \ref{cor:peelinggenerallaw} and its proof, write 
  \begin{eqnarray}\mathbb{E}^{(p)}[ M_{\tau}] = \sum_{k=0}^\infty \mathbb{E}^{(p)}[M_{k} \cdot \mathbbm{1}_{ \{\tau=k\} }]=   \sum_{k=0}^\infty f(p) \mathbb{E}^{(p)}_{\infty}[\mathbbm{1}_{ \{\tau=k\} }] = f(p) \mathbb{P}^{(p)}_{\infty} (\tau < \infty). \label{eq:stopping}\end{eqnarray}

Let us give two simple examples that illustrate \eqref{eq:stopping}. First, if we take $\tau$ to be a bounded stopping time, then clearly $ \mathbb{E}^{(p)}[M_{\tau}]= \mathbb{E}^{(p)}[M_{0}]=f(p)$ by the optional stopping theorem, and on the other hand $ \mathbb{P}^{(p)}_{\infty}( \tau < \infty) =1$.
Second, consider again the peeling algorithm $ \mathcal{A}$ such that $ \mathcal{A}(\mathbf{h}) \neq \dagger$ if $\mathbf{h}$ has at least one hole, and let $\tau(\mathbf{t}) = \inf \{n \geq 0; \mathcal{H}_{n}(\mathbf{t})= \mathbf{t} \}$ be the first time when $\mathbf{t}$ is completely discovered, with the convention $ \inf \emptyset = + \infty$. As above, under $ \mathbb{P}^{(p)}$, $ \tau$ is almost surely finite and $M_{\tau}=0$, so that  $\mathbb{E}^{(p)}[M_{\tau}]=0$. On the other hand $ \mathbb{P}^{(p)}_{\infty}( \tau < \infty)=0$ since a branching peeling exploration never  completely discovers the UIPT which is infinite.

\begin{corollary} \label{cor:stopping} Let $(\tau_{n})_{n \geq 0}$ be an increasing sequence of  $( \mathcal{F}_{n})_{n \geq 0}$ stopping times, which are all almost surely finite under $ \mathbb{P}^{(p)}$ as well as under  $ \mathbb{P}^{(p)}_{\infty}$. Then the process $(M_{\tau_{n}})_{n \geq 0}$ is a $ (\mathcal{F}_{\tau_{n}})_{n \geq 0}$ martingale under $ \mathbb{P}^{(p)}$. 
\end{corollary}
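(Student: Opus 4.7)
The strategy is to leverage the Radon--Nikodym interpretation of the cycle martingale exposed in \eqref{eq:radonnikodym}, which reveals that $M_n/f(p)$ is the density of $\mathbb{P}^{(p)}_\infty$ with respect to $\mathbb{P}^{(p)}$ on the $\sigma$-field $\mathcal{F}_n$. The plan is to extend this identification to stopping times that are almost surely finite under \emph{both} measures, and then to apply the classical change-of-measure proof that the Radon--Nikodym density is a martingale.

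First I would upgrade the computation in \eqref{eq:stopping} to a conditional statement. For any $(\mathcal{F}_n)$-stopping time $\tau$ that is almost surely finite under $\mathbb{P}^{(p)}$, and any bounded $\mathcal{F}_\tau$-measurable random variable $\phi$, the partition according to $\{\tau = k\}$ (which lies in $\mathcal{F}_k$) together with \eqref{eq:radonnikodym} gives
\begin{equation*}
\mathbb{E}^{(p)}[\phi \cdot M_\tau] \;=\; \sum_{k \geq 0} \mathbb{E}^{(p)}[\phi \mathbbm{1}_{\{\tau = k\}} M_k] \;=\; f(p) \sum_{k \geq 0} \mathbb{E}^{(p)}_\infty[\phi \mathbbm{1}_{\{\tau = k\}}] \;=\; f(p)\, \mathbb{E}^{(p)}_\infty[\phi \mathbbm{1}_{\{\tau < \infty\}}].
\end{equation*}
In particular, taking $\phi = 1$ and using the assumption that $\tau_n$ is almost surely finite under $\mathbb{P}^{(p)}_\infty$ yields $\mathbb{E}^{(p)}[M_{\tau_n}] = f(p)$, so each $M_{\tau_n}$ is integrable; moreover $M_{\tau_n}$ is $\mathcal{F}_{\tau_n}$-measurable by standard optional-time arguments.

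Next, fix $n \geq 0$ and let $\phi$ be any bounded $\mathcal{F}_{\tau_n}$-measurable random variable. Since $\tau_n \leq \tau_{n+1}$, $\phi$ is also $\mathcal{F}_{\tau_{n+1}}$-measurable. Applying the displayed identity above to $\tau_n$ and to $\tau_{n+1}$, and using that both stopping times are almost surely finite under $\mathbb{P}^{(p)}_\infty$, we obtain
\begin{equation*}
\mathbb{E}^{(p)}[\phi \cdot M_{\tau_{n+1}}] \;=\; f(p)\, \mathbb{E}^{(p)}_\infty[\phi] \;=\; \mathbb{E}^{(p)}[\phi \cdot M_{\tau_n}].
\end{equation*}
By the defining property of conditional expectation this gives $\mathbb{E}^{(p)}[M_{\tau_{n+1}} \mid \mathcal{F}_{\tau_n}] = M_{\tau_n}$, so $(M_{\tau_n})_{n \geq 0}$ is a $(\mathcal{F}_{\tau_n})_{n \geq 0}$-martingale under $\mathbb{P}^{(p)}$.

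The only delicate point is the two-sided almost-sure finiteness hypothesis: under $\mathbb{P}^{(p)}$ alone one could have $M_{\tau_n} = 0$ on $\{\tau_n = \infty\}$ (as in the second example following \eqref{eq:stopping}), which would break the martingale property. The assumption $\mathbb{P}^{(p)}_\infty(\tau_n < \infty) = 1$ is precisely what removes the boundary term $\mathbbm{1}_{\{\tau < \infty\}}$ in the identity above, and this is where the Radon--Nikodym viewpoint is essential rather than a direct optional-stopping argument (which would not apply since $(M_n)$ is not uniformly integrable in general).
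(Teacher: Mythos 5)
Your proof is correct, and it follows a genuinely different route from the paper's. The paper considers the stopped martingale $M^{(n)}_k = M_{k\wedge\tau_n}$, observes via \eqref{eq:stopping} that $\Espb{M^{(n)}_k}=\Espb{M_0}=\Espb{M_{\tau_n}}$, and then invokes Scheff\'e's lemma to upgrade the a.s.~convergence $M^{(n)}_k\to M_{\tau_n}$ to $\mathbb{L}_1$-convergence; this shows the stopped martingale is uniformly integrable, and the conclusion follows from optional stopping for uniformly integrable martingales. You instead strengthen \eqref{eq:stopping} into a conditional identity by inserting a bounded $\mathcal{F}_\tau$-measurable test function $\phi$ (the key observation being that $\phi\mathbbm{1}_{\{\tau=k\}}$ is $\mathcal{F}_k$-measurable, so the Radon--Nikodym relation \eqref{eq:radonnikodym} applies term by term), and then verify the defining equation of conditional expectation directly. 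Both arguments hinge on \eqref{eq:radonnikodym} and on the hypothesis of a.s.~finiteness under \emph{both} measures, but your version is slightly more economical: it needs no convergence theorem and no passage to the limit $k\to\infty$, and it isolates cleanly where the two-sided finiteness enters (namely to delete the indicator $\mathbbm{1}_{\{\tau<\infty\}}$ in the upgraded identity). The paper's argument has the small side benefit of establishing along the way that the stopped martingale $(M_{k\wedge\tau_n})_{k\geq 0}$ is uniformly integrable, which is a mild bonus but not needed for the statement. One minor point worth making explicit in your write-up: the interchange of sum and expectation in your displayed identity should first be justified for $\phi\geq 0$ by monotone convergence (which also establishes that $M_{\tau_n}$ is integrable when you take $\phi\equiv 1$), and then extended to general bounded $\phi$ by decomposing into positive and negative parts.
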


 \begin{proof}  Set $M^{(n)}_{k}=M_{k \wedge \tau_{n}}$ for $k \geq 0$. As $k \to \infty$, $M^{(n)}_{k} \rightarrow M_{\tau_{n}}$ almost surely.  By our assumption and  \eqref{eq:stopping}, we get that $ \Espb{M^{(n)}_{k} }=\Espb{M_{0}}= \Espb{M_{\tau_{n}}}$ for every $k \geq 0$. Therefore, since we are dealing with non-negative martingales, by Scheffé's lemma, $M^{(n)}_{k} \rightarrow M_{\tau_{n}}$ in $\mathbb{L}_{1}$, so that the martingale $(M^{(n)}_{k})_{k \geq 0}$ is uniformly integrable. As a consequence, $ \Espb{M_{\tau_{n}} | \mathcal{F}_{\tau_{m}} }= M_{\tau_{m}}$ for $m \leq n$ and the proof is complete.
\end{proof}

\subsection{Scaling limit for the locally largest cycle}
\label{sec:llc}

In a branching peeling exploration of the UIPT, one cycle naturally plays a distinguished role, namely the boundary of the unique hole containing an infinite triangulation. The peeling transitions along this distinguished cycle have been studied in great details in \cite{CLGpeeling} and different scaling limit results have been established. However, in the case of Boltzmann triangulation there is a priori no distinguished cycle to track during a branching peeling exploration. Nonetheless, we can still follow the evolution of a singled out  cycle by deciding to track at each peeling step the locally largest cycle. 

More precisely, the initial distinguished cycle $\mathscr{C}^{\ast}(0)$ is the only cycle of $ \mathcal{H}_{0}$ and $ \sigma_{0}=0$.
Then, inductively, for $i \geq 0$, if $\mathscr{C}^{\ast}(i)=\dagger$ (the cemetery point), set $\mathscr{C}^{\ast}(i+1)=\dagger$, and otherwise define $\sigma_{i+1}= \inf \{ j >\sigma_{i} ; \mathcal{A}( \mathcal{H}_{j}) \in \mathscr{C}^{\ast}(i) \}$ (with the usual convention $\inf \emptyset =\infty$). If $\sigma_{i+1}=\infty$, we define $\mathscr{C}^{\ast}(i+1)=\mathscr{C}^{\ast}(i)$. Otherwise, when peeling the edge $\mathcal{A}( \mathcal{H}_{\sigma_{i+1}})$, we define $\mathscr{C}^{\ast}(i+1)$ depending on what peeling event happens:
\begin{itemize}
\item If the event $\mathsf{V}$ occurs, we define $\mathscr{C}^{\ast}(i+1)=\dagger$,
\item If the event $\mathsf{C}$ occurs, we define  $\mathscr{C}^{\ast}(i+1)$ to be the new cycle thus created,
\item If the event $ \mathsf{G}_{k}$ occurs, one creates two new cycles when peeling the edge $\mathcal{A}( \mathcal{H}_{\sigma_{i+1}})$. We define $ \mathscr{C}^{\ast}(i+1)$ to be the cycle with largest perimeter  (if $\mathscr{C}^{\ast}(i)$ is a cycle of odd length  which is split into two cycles of equals lengths, we choose between the two in a deterministic way).
\end{itemize}

 The cycles $(\mathscr{C}^{\ast}(i))_{i \geq 0}$ are called the locally largest cycles for the algorithm $ \mathcal{A}$. Finally, we agree by convention that the perimeter of $\dagger$ is $0$. By the description in Section~\ref{sec:peel} of the one-step peeling transitions, under $ \mathbb{P}^{(p)}$, conditionally on the event $ \{ \exists i \geq 0; \mathscr{C}^{\ast}(i)=\dagger \}$ that we assume to have positive probability (which is always the case if $ \mathcal{A}(\mathbf{h}) \neq \dagger$ when $\mathbf{h}$ has at least one hole),  the law of the perimeters of $\mathscr{C}^{\ast}(0), \mathscr{C}^{\ast}(1), \ldots$ is a  Markov chain on the nonnegative integers, started at $p$, absorbed at zero and with the following probability transitions:
\begin{equation}
\label{eq:trans} \mathbf{b}(p,p-k) = 2b_{k}^{(p)} \mbox{ for } 0 \leq k < \frac{p}{2}, \quad  \mathbf{b}(2p+1,p) = b_{p}^{(2p+1)}, \quad  \mathbf{b}(p,p+1) = b_{-1}^{(p)}, \quad  \mathbf{b}(2,0) = b_{\varnothing}^{(2)},
\end{equation}
and $ \mathbf{b}(p,k)=0$ otherwise. Recall from  Section~\ref{sec:onestepboltz} the explicit expression of $b^{(p)}$. Let $({{\widetilde L}}^{(p)}({k}))_{k \geq 0}$ be a Markov chain starting from ${{\widetilde L}}^{(p)}({0})=p$ and with these probability transitions, so that we can think of ${{\widetilde L}}^{(p)}({k})$ as the length of 
$\mathscr{C}^{\ast}(k)$. The general machinery developed in \cite{BK14} enables us to identify the scaling limit of this Markov chain. In order to describe it, we first introduce some background.

Let $\nu$  be the measure on $\R$ with density
$$ \nu(\d x)= (x(1-x))^{-5/2} \mathbbm {1}_{ \{1/2 \leq x \leq 1\}} \d x$$
and let $ \Pi$ be the push-forward of $ \nu$ by the mapping $x \mapsto \ln(x)$. Note that $ \Pi$ is supported on  $ [- \ln(2),0]$ and that
$ \int  x^{2} \Pi(\d x) < \infty$. Recall from the Introduction that  $ (\xi(t))_{t \geq 0}$ is a Lévy process with characteristic  exponent $ \Phi(\lambda)= \Psi(i\lambda)$
given by the Lévy--Khinchin formula 
$$ \Phi(\lambda)=  - \frac{8}{3} i \lambda+ \int_{- \ln(2)}^{0} \left( \e^{i\lambda x}-1+i \lambda (1-\e^x)  \right)  \ \Pi(\d x), \qquad \lambda \in \R.$$
Specifically, there is the identity $ \Es {\e^{ i \lambda \xi(t)}}=\e^{t \Phi( \lambda)}$ for $ t \geq 0, \lambda \in \R$. In the literature, the Lévy--Khinchin formula is usually written with the term $1-\e^x$ replaced by $-x$, but this is essentially irrelevant  since it only changes the factor in front of $i \lambda$. We use this version to be consistent with the notation of \cite{BeGF}. Then, for $ \alpha<0$, set $$I^{(\alpha)}_{ \infty}= \int_{0}^{\infty} \e^{ -\alpha \xi(s)} \ \d s \quad \in \quad (0, \infty].$$

{Note that
\begin{equation}
\label{eq:drift}\Psi'(0)=- \frac{8}{3}+ \int_{1/2}^{1} \frac{1-x+\log(x)}{(x(1-x))^{5/2}} \mathrm{d}x =  - \frac{8}{3}+ \frac{8}{9} \left( 6 \pi-18 \right)  <0
\end{equation}
so that  $\xi$ drifts to $- \infty$. In particular,} we have $I^{(\alpha)}_{\infty}<\infty$ almost surely by \cite[Theorem 1]{BY05}. Then for every $t \geq 0$, set
$$\tau^{(\alpha)}(t)= \inf  \left\{ u \geq 0 ; \int_{0}^{u}  \e^{ - \alpha \xi(s)}\d s>t \right\} $$
with the usual convention  $ \inf \emptyset = \infty$ for $t\geq I^{(\alpha)}_{\infty}$. Finally,  by using  the Lamperti transform  \cite {Lam72} of $ \xi$, define  $ {{\widetilde X}}$ to be the self-similar process of index $-3/2$ driven by $\xi$:
\begin{equation}
\label{eq:defX}{{\widetilde X}}(t)= \exp\big({ \xi(\tau^{(-3/2)}(t))}\big) \quad \textrm { for }  \quad 0 \leq t <  I^{(-3/2)}_{ \infty}, \qquad  \qquad {{\widetilde X}}(t)= 0 \quad \textrm { for }  \quad t \geq I^{(-3/2)}_{\infty}.
\end{equation}

Note that the process  $ {{X}}$ introduced in \eqref{eq:defY} is the self-similar Markov process  driven by $ \xi$ but corresponding to the  index $\alpha=-1/2$. In this direction, we point out that in turn, the two are related by another time-change, which, for the sake of simplicity, we describe implicitly as follows:
\begin{equation}\label{eq:tildextox}
{\widetilde X}(t)=X\left(\int_0^t\frac{\d s }{{\widetilde X}(s)}\right)\,,\qquad t\geq 0.
\end{equation}

We are now in position to prove the following invariance principle for the Markov chain ${{\widetilde L}}^{(p)}$.  Denote by $\mathbb{D}(\mathbb{R}_{+},\mathbb{R})$ the space of real-valued càdlàg functions on $\R_{+}$ equipped with the $J_{1}$ Skorokhod topology, and recall the notation  $\tone$ from \eqref{eq:asympZp}. 
\begin{proposition}[Scaling limit for the {locally} largest cycle] \label{prop:scalingllcycle} The convergence 
\begin{equation}
\label{eq:scaling}\left( \frac{1}{p} {{\widetilde L}}^{(p)}({[p^{3/2}t]}); t \geq 0\right) \quad  \xrightarrow[p\to\infty]{(d)} \quad  ({{\widetilde X}}( 2 \tone t) ; t \geq 0)
\end{equation} 
holds in distribution in $\mathbb{D}(\mathbb{R}_{+},\mathbb{R})$.
\end{proposition}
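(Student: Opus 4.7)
The plan is to apply the invariance principle for positive self-similar Markov processes on the nonnegative integers developed in \cite{BK14} to the chain $\widetilde L^{(p)}$, whose transitions are given explicitly in \eqref{eq:trans} and Section~\ref{sec:onestepboltz}. All the needed ingredients are already at hand since the transition probabilities $b_{-1}^{(p)}, b_k^{(p)}$ are known in closed form in terms of the partition function $Z(\cdot)$, whose precise asymptotic behaviour is given by \eqref{eq:asympZp}.

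The first step is to quantify the asymptotic behaviour of the one-step transitions on two scales. For \emph{large jumps}, taking $k = \lfloor(1-y)p\rfloor$ with $y \in [1/2, 1)$ and using $Z(p+1) \sim \tone\,12^p p^{-5/2}$, a direct computation yields
\begin{equation*}
\mathbf{b}(p, \lfloor yp\rfloor) \;=\; 2\,\frac{Z(k+1)Z(p-k)}{Z(p)} \;\sim\; 2\tone\, p^{-5/2}\,(y(1-y))^{-5/2}\,;
\end{equation*}
summing over $y$ in a small interval and multiplying by the $p^{3/2}$ steps per unit of rescaled time identifies the rescaled large-jump intensity with $2\tone\,\nu(\mathrm{d}y)$, the image by the exponential of the L\'evy measure of $\xi$. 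For \emph{small jumps} the transitions tend to $\mathbf{b}(p, p+1) \to q_{-1}$ and $\mathbf{b}(p, p-k) \to 2q_{-k}$ for each fixed $k \geq 0$, and the two identities \eqref{eq:variation=0} ensure simultaneously the conservation of mass and the cancellation of the mean jump at leading order.

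I would then invoke the scaling limit theorem of \cite{BK14}, whose hypotheses can be phrased as the pointwise convergence, for every $q \geq 0$,
\begin{equation*}
p^{3/2}\,\Bigl(1 - \mathbb{E}\bigl[\,(\widetilde L^{(p)}(1)/p)^{q}\ \big|\ \widetilde L^{(p)}(0)=p\bigr]\Bigr) \;\longrightarrow\; -\,2\tone\,\Psi(q)\,,
\end{equation*}
together with a mild uniformity requirement. This convergence simultaneously encodes the L\'evy measure and the drift of the limit: the integral appearing in $\Psi$ (recall~\eqref{eq:defpsi}) comes from the large-jump density identified above, while the drift constant $-\tfrac{8}{3}$ emerges by combining the second-order Taylor expansion of $(1-k/p)^q$ for small $k$ (where the second identity in \eqref{eq:variation=0} is decisive) with the next-order corrections in $Z(p\pm k)/Z(p)$ and the near-diagonal contribution of the large-jump integral.

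Once the above Laplace-type convergence is established, \cite{BK14} directly yields the convergence of $(p^{-1}\widetilde L^{(p)}([p^{3/2}t]); t \geq 0)$ in $\mathbb{D}(\mathbb{R}_+, \mathbb{R})$ equipped with the Skorokhod $J_{1}$ topology towards the positive self-similar Markov process of index $-3/2$ driven by the L\'evy process with Laplace exponent $2\tone\,\Psi$. Since multiplying the Laplace exponent by a constant $c > 0$ amounts to a deterministic time-change by $c$ in the Lamperti transform, this limit coincides with $(\widetilde X(2\tone\,t); t \geq 0)$ as announced. The main difficulty I anticipate lies in the bookkeeping of the third step: isolating the precise drift constant $-\tfrac{8}{3}$ requires uniform expansions of $\mathbf{b}(p,\cdot)$ beyond leading order and a delicate cancellation between the small-jump Taylor expansion and the regularised large-jump integral.
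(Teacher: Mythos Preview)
Your plan is essentially the paper's own proof: both apply \cite{BK14} to the chain $\widetilde L^{(p)}$, identify the large-jump intensity via \eqref{eq:asympZp}, and isolate the drift computation (which the paper handles in a preparatory lemma computing $p^{3/2}\int_{-1}^{1} x\,\Pi^{(p)}(\mathrm{d}x)$ by a three-term decomposition exploiting \eqref{eq:variation=0}, much as you anticipate) as the delicate step. The only omission in your sketch is the Lyapunov-type condition \textbf{(A4)} of \cite{BK14}, namely the existence of some $\beta_0>3/2$ with $\Psi(\beta_0)<0$; this is not a consequence of your Laplace-type convergence alone but is a one-line check (take $\beta_0=2$).
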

\begin{remark} In addition to the convergence of the last proposition the results of \cite{BK14} show that there is also convergence of the absorption times. More precisely if $\widetilde{\sigma}^{(p)}$ is the first time when $\widetilde{L}^{(p)}$ touches $0$ then we have $p^{-3/2} \widetilde{\sigma}^{(p)} \to \widetilde{\sigma}$ in distribution, where $\widetilde{\sigma}$ is the hitting time of $0$ by $\widetilde{X}( 2 \tone \cdot)$. 
We will not use this in the sequel; however it will be argued in the proof of the forthcoming Lemma \ref{Lj1} that the convergence stated in Proposition \ref{prop:scalingllcycle} further holds in distribution in $\mathbb{D}([0,\infty],\mathbb{R})$, where $[0,\infty]$ is the compactification of $[0,\infty[= \mathbb{R}_{+}$. This fact would also immediately follow from the convergence of the absorption times.
\end{remark}

In order to apply \cite[Theorems 3 \& 4]{BK14}, we need some preparatory notation and technical lemmas. For $p \geq 1$, let  $ \Pi^{(p)}$ be the law of $ \ln({{\widetilde L}}^{(p)}({1})/p)$.
 
 \begin {lemma}\label {lem:techn} \
\begin{enumerate}
\item[(i)] If $F: \R \rightarrow \R_{+}$ is a continuous function with compact support such that $F(x) = \mathcal{O}(x^{2})$ as $ x \rightarrow 0$, then
$$p^{3/2} \cdot  \int_{\R} F(x)  \ \Pi^{(p)}(\d x)  \quad\mathop{\longrightarrow}_{p \rightarrow \infty} \quad  2 \tone \cdot \int_{\R} F(x) \ \Pi(\d x).$$
 \item[(ii)] We have
$$ p^{3/2} \cdot  \int_{-1}^{1}x \ \Pi^{(p)}(\d x)  \quad\mathop{\longrightarrow}_{p \rightarrow \infty} \quad  -\tone \frac{8 (3 \pi-7)}{9}.$$
\end{enumerate}
\end {lemma}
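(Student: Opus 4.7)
The plan is to expand each integral as a sum over the explicit transitions of the Markov chain $\widetilde L^{(p)}$ given in \eqref{eq:trans}, and then estimate the resulting sum using the asymptotic $Z(p+1)\sim \tone\,12^p p^{-5/2}$ from \eqref{eq:asympZp}. Writing $L = \widetilde L^{(p)}(1)$, for any admissible test function $\varphi$,
\begin{equation*}
\int \varphi\, d\Pi^{(p)} = b_{-1}^{(p)}\varphi\!\left(\ln\tfrac{p+1}{p}\right) + 2\sum_{k=0}^{\lfloor(p-1)/2\rfloor} b_k^{(p)}\varphi\!\left(\ln\tfrac{p-k}{p}\right) + R_p[\varphi],
\end{equation*}
where $R_p[\varphi] = O(p^{-5/2}\|\varphi\|_\infty)$ absorbs the single middle-term correction for odd $p$ and the $\mathsf V$-event when $p = 2$. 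A refined Stirling expansion of the form $Z(p)=\tone\,12^{p-1}p^{-5/2}(1+\tfrac{15}{8p}+O(p^{-2}))$ yields two compatible estimates: for fixed $k$, $b_k^{(p)}\to q_{-k}$; and uniformly for $k/p=u\in[\varepsilon,1/2]$, $b_k^{(p)}\sim \tone p^{-5/2}(u(1-u))^{-5/2}$. These match because $q_{-k}\sim\tone k^{-5/2}$ as $k\to\infty$. In addition, $b_{-1}^{(p)}=\tfrac{1}{\sqrt 3}(1-\tfrac{5}{2p})+O(p^{-2})$, which will be needed in (ii).

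For part (i), the compact support of $F$ together with $F(x)=O(x^2)$ at $0$ makes the positive-jump contribution $O(p^{-1/2})$ and the remainder $O(p^{-1})$, both vanishing. The main sum is a Riemann approximation of $2\tone\int_0^{1/2}(u(1-u))^{-5/2}F(\ln(1-u))\,du$; the growth condition on $F$ ensures integrability at $u=0$, and dominated convergence via $b_k^{(p)}\le C p^{-5/2}(u(1-u))^{-5/2}$ handles the small-$k$ regime. Changing variables $y=1-u$ recovers $2\tone\int F\,d\Pi$.

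For part (ii), since for $p\ge 3$ the support of $\Pi^{(p)}$ lies in $(-1,1)$, the truncation is trivial. Decompose $\ln(L/p)=(L-p)/p+\phi((L-p)/p)$ with $\phi(z)=\ln(1+z)-z=O(z^2)$, giving $\int x\, d\Pi^{(p)} = \E[L-p]/p + \E[\phi((L-p)/p)]$. The second piece is handled by (i) applied to $\phi$ (which is bounded on the image $(L-p)/p\subset[-1/2,1/p]$), yielding $p^{3/2}\E[\phi(\cdot)]\to 2\tone\int_{1/2}^1(\ln y+1-y)(y(1-y))^{-5/2}\,dy = 2\tone(\Psi'(0)+\tfrac{8}{3})$ by \eqref{eq:drift}. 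For the first piece, combining $\max(k+1,p-k)-p = -\tfrac12((p-1)-|p-1-2k|)$ with the symmetry $b_k^{(p)}=b_{p-1-k}^{(p)}$ gives the closed form
\begin{equation*}
\E[L-p] = \tfrac12\bigl((p+1)b_{-1}^{(p)}+E_p-(p-1)\bigr), \qquad E_p := \sum_k b_k^{(p)}|p-1-2k|.
\end{equation*}
The $O(p)$ parts of the three summands cancel thanks to the UIHPT mean-zero identity \eqref{eq:variation=0} (i.e.\ $q_{-1}=2\sum_k k q_{-k}$). Extracting the surviving $O(p^{-1/2})$ contribution requires splitting $E_p$ at an intermediate scale $K$ into a discrete range (using $b_k^{(p)}=q_{-k}(1+O(k/p))$) and a bulk range (using the continuous asymptotic), together with the refined Stirling correction in $b_{-1}^{(p)}$. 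Summing the two pieces gives the announced value.

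The main obstacle is this final extraction for $p^{1/2}\E[L-p]$: the discrete tail $\sum_{k>K} k q_{-k}\sim 2\tone/\sqrt K$ and the bulk integral $\int_{K/p}^{1/2}u^{-3/2}(1-u)^{-5/2}\,du \sim 2\sqrt{p/K}$ both diverge in the simultaneous limit $K\to\infty$, $K/p\to 0$, and only their precisely compensated difference produces a finite coefficient. Carrying the next-order Stirling terms through this cancellation pins down the exact constant $-\tone\cdot 8(3\pi-7)/9$.
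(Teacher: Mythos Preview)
Your treatment of part~(i) and the decomposition $\ln(L/p)=(L-p)/p+\phi((L-p)/p)$ in part~(ii) match the paper exactly. The gap is in the linear piece $p^{1/2}\E[L-p]$, which you leave as an acknowledged ``main obstacle'' with the constant merely asserted.

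Your $E_p$-formula is algebraically equivalent to the simpler identity $\E[L-p]=b_{-1}^{(p)}-2\sum_{1\le k<p/2}k\,b_k^{(p)}$ (plus a negligible middle term), and the paper works directly with this form. Instead of your intermediate-scale splitting at $K$---which, as you note, produces two $K^{-1/2}$-divergent pieces whose cancellation must be tracked through next-order Stirling corrections---the paper subtracts the half-plane probabilities $q_k$ using the exact identity $q_{-1}-2\sum_{k\ge 0}k\,q_{-k}=0$ from \eqref{eq:variation=0}. This yields three pieces, each with a \emph{finite} individual limit:
\[
b_{-1}^{(p)}-q_{-1}=-\tfrac{5}{2\sqrt 3(p+1)}=o(p^{-1/2}),
\qquad
2\sum_{k\ge p/2}k\,q_k\sim \tone\cdot 4\sqrt 2\,p^{-1/2},
\]
and the remaining sum $\sum_{1\le k<p/2}k(b_k^{(p)}-q_k)$ is handled by a single dominated-convergence argument once one checks the uniform bound $|b_k^{(p)}/q_k-1|\le C\,k/p$ and the pointwise limit $b_k^{(p)}/q_k\to(1-k/p)^{-5/2}$. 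No matching of divergent tails is needed, and the constant $-\tfrac{16}{3}\tone$ for the linear part drops out immediately.

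In short: your route via $E_p$ and scale $K$ is workable in principle but not completed, and it forces you to chase several compensating error terms. The paper's trick of centring $b_k^{(p)}$ by $q_k$ (rather than splitting the range of $k$) turns the delicate cancellation into a clean Riemann sum and closes the argument in a few lines.
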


\begin {proof}For (i), first note that \eqref{eq:asympZp} readily entails that for fixed $x \in (1/2,1)$, if $k_{p} \sim x p$ as $p \rightarrow \infty$, then
\begin{equation}
\label{eq:equiv1}\mathbf{b}(p,k_{p})  \quad\mathop{\sim}_{p \rightarrow \infty} \quad  \frac{1}{p^{5/2}} \cdot 2 \mathsf{t}_{\one} \cdot (x(1-x))^{-5/2}.
\end{equation}
In addition, there exists a constant $C_{1}>0$ such that
\begin{equation}
\label{eq:borne1}\textrm {for every } p \geq 2, \quad \textrm {for every } \frac{p}{2} <k < p, \qquad \mathbf{b}(p,k) \leq C_{1} \cdot   \frac{p^{5/2}}{ (k(p-k))^{5/2}}.
\end{equation}
Also observe that $ \mathbf{b}(p,p)=2Z(1)$ is constant. Now write
$$ p^{3/2} \cdot \int_{\R} F(x) \Pi^{(p)}(\d x) = p^{3/2} \cdot F \left(  \ln \left( 1+ \frac{1}{p} \right)  \right)  \mathbf{b}(p,p+1)+ p^{3/2} \cdot  \sum_{k>p/2}^{p-1} F \left(  \ln \left(   \frac{k}{p} \right)  \right)  \mathbf{b}(n,k)+o(1),$$
where $o(1)$ is a quantity tending to $0$ as $p \rightarrow \infty$ capturing the term $\mathbf{b}(p,(p-1)/2)$ when $p$ is odd. The assumption on $F$ yields that {the first term of the sum in the right-hand side} tends to $0$ as $n \rightarrow \infty$. As for the second one, by a change of variables, write 
$$ p^{3/2} \cdot \sum_{k>p/2}^{p-1} F \left(  \ln \left(   \frac{k}{p} \right)  \right)  \mathbf{b}(p,k)=  p^{5/2} \cdot \int_{1/2}^{1} du \ F \left( \ln \left(    \frac{\fl {n u}}{n} \right)  \right)  \mathbf{b}(p, \fl {pu})+o(1),$$
where $o(1)$ is a quantity capturing the boundary terms.
Set $G_{p}(u)=p^{5/2} \cdot \ F ( \ln (    {\fl {n u}}/{n} )  )  \mathbf{b}(p, \fl {pu}) $ for $1/2 \leq u < 1$. The assumption on $F$ and \eqref{eq:borne1} yield the existence of a constant $C>0$ such that   
$0 \leq  G_{p}(u) \leq C (1-u)^{-1/2}$ for every $p \geq 2$ and $1/2 <u <1$. In addition, by \eqref{eq:equiv1}, for every fixed $1/2 < u <1$, $G_{p}(u) \rightarrow F(  \ln(u)) \nu(u)$ as $p \rightarrow \infty$. Assertion (i) then follows from the dominated convergence theorem.

For (ii), write 
$$ p^{3/2} \cdot  \int_{-1}^{1}x \ \Pi^{(p)}(\d x)= p^{3/2} \sum_{k>p/2}^{p+1} \left(  \ln \left(  \frac{k}{p} \right)  - \frac{k}{p} +1\right) \mathbf{b}(p,k)+ p^{3/2} \sum_{k>p/2}^{p+1} \left( \frac{k}{p}-1 \right) \mathbf{b}(p,k)+o(1).$$
The first assertion gives us that
\begin{equation}
\label{eq:cv1}p^{3/2} \sum_{k>p/2}^{p+1} \left(  \ln \left(  \frac{k}{p} \right)  - \frac{k}{p} +1\right) \mathbf{b}(p,k)  \quad\mathop{\longrightarrow}_{p \rightarrow \infty} \quad   \int_{1/2}^{1} ( \ln(u)-u+1) \nu_{\one}(u) du=  \tone \cdot \frac{16(17-6 \pi )}{9}.
\end{equation}
We next claim that
\begin{equation}
\label{eq:cv2}p^{3/2} \sum_{k>p/2}^{p+1} \left( \frac{k}{p}-1 \right) \mathbf{b}(p,k)  \quad\mathop{\longrightarrow}_{p \rightarrow \infty} \quad  - \frac{16}{3} \tone.
\end{equation}
Assertion (ii) will then readily follow by summing \eqref{eq:cv1} and \eqref{eq:cv2}. To establish \eqref{eq:cv2}, first write
$$p^{3/2} \sum_{k>p/2}^{p+1} \left( \frac{k}{p}-1 \right) \mathbf{b}(p,k)= \sqrt{p} \left( b_{-1}^{(p)}- 2  \sum_{1 \leq k < p/2} k  b_k^{(p)} \right).$$
Recall the definition of $q_{-1}$ and $q_{k}$ from \eqref{def:qk}. Using \eqref{eq:variation=0} we may write
$$b_{-1}^{(p)}- 2  \sum_{1 \leq k < p/2} k  b_k^{(p)} = (b_{-1}^{(p)}-q_{-1})  - 2  \sum_{1 \leq k < p/2} k  (b_k^{(p)}-q_k)+ 2 \sum_{k \geq p/2} k q_k.$$
We now estimate the three terms of the right-hand side of the last equality as $p \rightarrow \infty$. First, for $p \geq 1$,
$$b_{-1}^{(p)}-q_{-1}=-\frac{5}{2 \sqrt{3} (p+1)} = o\left(\frac{1}{\sqrt{p}}\right).$$
Next, since $q_k \sim \tone \cdot k^{-5/2}$ as $k \to \infty$, we have 
$$2 \sum_{k \geq p/2} k q_k \quad \mathop{\sim}_{p \rightarrow \infty } \quad \tone \cdot \frac{4 \sqrt{2}}{\sqrt{p}}.$$
Finally, it is a simple matter to check that $f : k \mapsto |b_k^{(p)}/q_k-1 | \cdot p/k=| 12^k Z(p-k)/Z(p)-1| \cdot p/k $ is increasing in $k$ on $\llbracket 1,p-1 \rrbracket$, and that $f(\fl{p/2})$ converges to a positive constant as $p \rightarrow  \infty$. It follows that there exists a constant $C>0$ such that for every $p \geq 2$ and $x \in (0,1/2]$, $|b_{\fl{px}}^{(p)}/q_{\fl{px}}-1| \leq  C x$. In addition, by \eqref{eq:asympZp}, for every fixed $x \in (0,1/2)$, $b_{\fl{px}}^{(p)}/q_{\fl{px}} \rightarrow (1-x)^{-5/2}$ as $p \rightarrow  \infty$.
Hence, writing 
$$\sqrt{p} \cdot \sum_{1 \leq k < p/2} k  (b_k^{(p)}-q_k)= \sqrt{p} \cdot\sum_{1 \leq k < p/2} k q_k  \left(\frac{b_k^{(p)}}{q_k}-1\right) =  \int_{1/p}^{1/2} \d x \ p^{3/2} \fl{px} q_{\fl{px}} \cdot \left(\frac{b_{\fl{px}}^{(p)}}{q_{\fl{px}}}-1\right),$$
the dominated convergence theorem yields that
$$\sqrt{p} \cdot \sum_{1 \leq k < p/2} k  (b_k^{(p)}-q_k)  \quad \mathop{\longrightarrow}_{p \rightarrow \infty} \quad  \tone \cdot \int_{0}^{1/2} \d x \cdot \frac{1-(1-x)^{5/2}}{x^{3/2} \cdot  (1-x)^{5/2}}= \tone \cdot \frac{8+6 \sqrt{2}}{3}.$$
Therefore, 
$$p^{3/2} \sum_{k>p/2}^{p+1} \left( \frac{k}{p}-1 \right) \mathbf{b}(p,k)  \quad\mathop{\longrightarrow}_{p \rightarrow \infty}  \tone \cdot {4 \sqrt{2}}- 2 \tone \cdot \frac{8+6 \sqrt{2}}{3}= - \frac{16}{3} \tone.$$
This establishes \eqref{eq:cv2} and completes the proof. 
\end {proof}

 We are now ready to prove Proposition~\ref{prop:scalingllcycle}.
 
\begin{proof}[{Proof of Proposition~\ref{prop:scalingllcycle}}] Let ${{X'}}$ be the self-similar Markov process with index $ \alpha=-3/2$ which is defined just like $ {{\widetilde X}} $, except that its driving Lévy process is  $ \xi'(t)=\xi(2 \tone t)$. It is a simple matter to see that ${{X'}}$ has the same distribution as $ ( {{\widetilde X}}( 2 \tone t); t \geq 0)$. It is therefore enough to show that $({{\widetilde L}}^{(p)}({[p^{3/2}t]})/p ; t \geq 0)$ converges in distribution to $ {{X'}}$. Note that the characteristic exponent $\Phi'$ of $\xi'$ is given by $\Phi'= 2 \tone \Phi$, so that
$$ \Phi'(\lambda) =  - i \tone \frac{8 (3 \pi-7)}{9}  \lambda+ \int_{- \ln(2)}^{0} \left( \e^{i\lambda x}-1+i \lambda x  \right)  \ \Pi'(\d x),$$
with $\Pi'(\d x)= 2 \tone \Pi(\d x)$ and  $\lambda \in \R$.
We now check that the assumptions  \textbf{(A1), (A2), (A3), (A4), (A5)} of \cite{BK14} hold (to keep the exposition as short as possible, we do not reproduce their statement here), and the desired result will follow \cite[Theorems 3 \& 4]{BK14}.

For \textbf{(A1)}, we need  the following vague convergence of measures on $ \overline {\R} \backslash \{ 0\}$:
$$p^{3/2} \cdot \Pi^{(p)}(\d x)  \quad\mathop{\longrightarrow}^{(v)}_{p \rightarrow \infty} \quad  \Pi'(\d x).$$
This is an immediate consequence of Lemma \ref {lem:techn}. Similarly, \textbf{(A2)} follows from Lemma \ref{lem:techn} (which shows in particular that there is no Brownian part). For \textbf{(A4)} (which implies \textbf{(A3)}), we need to check the existence of $ \beta_{0}> 3/2$ such that $ \Psi'( \beta_{0})<0$, where $ \Psi'\coloneqq 2\tone \Psi$ is the Laplace exponent of $\xi'$. 
One can for instance take $ \beta_{0}=2$.
Finally, \textbf{\textbf{(A5)}} clearly holds since $ \Pi^{(p)}$ has finite support.
\end {proof}
 
\begin{remark} There is an alternative way of establishing a less explicit version of  Proposition~\ref{prop:scalingllcycle} which circumvents the appeal to \cite{BK14} and rather uses results in \cite{CLGpeeling} for the UIPT and the relation \eqref{eq:radonnikodym} of local absolute continuity between the latter and Boltzmann triangulations. 
Specifically, consider a peeling of the UIPT of the $p$-gon of the type dealt with in \cite{CLGpeeling},
that is the exploration only concerns the unbounded region, as at each step of the peeling, the yet unexplored bounded region that may arise is filled-in. Let $ {{\widetilde L}}^{(p)}_{\infty}(n)$ denote the perimeter of the cycle resulting after $n$ steps of peeling, and observe that for the peeling algorithms considered here for a Boltzmann triangulation, the locally largest cycle is the unique cycle such that the process of its perimeter 
never drops by more than a half of its value. It then follows from \eqref{eq:radonnikodym}
that for every $n\geq 0$ and every sequence $x_0=p, x_1, \ldots, x_n$ in $\{2, 3, \ldots\}$ with $x_{i+1}\geq \frac{1}{2}x_i$ for all $i=0, \ldots, n-1$, there is the identity
$$\P\left( {{\widetilde L}}^{(p)}(0)=x_0, \ldots, {{\widetilde L}}^{(p)}(n)=x_n\right)=\frac{f(p)}{f(x_n)}
\P\left( {{\widetilde L}}_{\infty}^{(p)}(0)=x_0, \ldots, {{\widetilde L}}_{\infty}^{(p)}(n)=x_n\right).$$
Since a version of Proposition 5 of \cite{CLGpeeling} shows that 
the process $\left( {{\widetilde L}}^{(p)}_{\infty}(n): n\geq 0\right)$ has the distribution of a certain random walk 
conditioned to remain larger than $1$, the identity above determines the law of the chain ${{\widetilde L}}^{(p)}$.

On the other hand, recall also that $f(p)\sim c p^3$ and that, according to Proposition 5 of \cite{CLGpeeling}, 
there is the weak convergence
$$\left( \frac{1}{p} {{\widetilde L}}_{\infty}^{(p)}({[p^{3/2}t]}); t \geq 0\right) \quad  \xrightarrow[p\to\infty]{(d)} \quad  (S^+(t) ; t \geq 0)$$
where in the right-hand side, $(S^+(t) ; t \geq 0)$ is a spectrally negative stable L\'evy process with index $3/2$ started from $S^+(0)=1$ and conditioned to stay positive. One can then deduce from above that as $p\to\infty$, the rescaled process 
$\frac{1}{p} {{\widetilde L}}^{(p)}({[p^{3/2}\times \cdot]})$ converges in distribution to a process which
can be described as a Doob transform of a spectrally negative stable L\'evy process with index $3/2$,
killed when it becomes negative and when having a jump smaller than the negative of half of its value.
This description is however much less explicit and useful as the one obtained in Proposition \ref{prop:scalingllcycle}.

\end{remark}

\section{Branching peeling by layers}
\label{bigsec:pbl}

Recall that the {height} of a vertex $x$ in a triangulation with a boundary $\mathbf{t}$ is its distance to the boundary and that for $r \geq 0$, the ball of radius $r$ of $ \mathbf{t}$ is the map $B_{r}( \mathbf{t})$ that  consists of all the faces of $ \mathbf{t}$ which have a vertex  at height less than or equal to  $r-1$ in $ \mathbf{t}$, with the convention that $B_{0}(\mathbf{t})$ is just the boundary of $\mathbf{t}$.
In addition, in $ B_{r}( \mathbf{t})$, the edges between two vertices at distance $r$ which do not belong to a same cycle are split into two edges enclosing a $2$-gon.

Here we describe a (deterministic) branching peeling algorithm, called \emph{peeling by layers}, which will in particular allow us to discover the cycles of   $B_{r}(\mathbf{t})$. Roughly speaking, this exploration procedure ``turns'' in clockwise order around the holes of the being explored triangulation, and discovers $B_{r}( \mathbf{t})$ layer after layer (by layer, we mean all the vertices having the same height), see \eqref{eq:stoppingtimeboule} below for a precise statement.  This algorithm is an easy adaptation of the (non branching) peeling by layers of \cite[Section~4.1]{CLGpeeling}, which itself builds upon \cite{Ang03}. 

We will then extend Proposition \ref{prop:scalingllcycle} to establish the existence of the scaling limit of the locally largest cycle at given heights.

\subsection{Definition of the branching peeling by layers algorithm}
\label{sec:branchedpeelinglayers}

The branching peeling by layers  $ \mathcal{H}_{0}(\mathbf{t}) \subset \cdots \subset \mathcal{H}_{n}(\mathbf{t}) \subset \cdots \subset \mathbf{t}$ of a triangulation with a boundary $\mathbf{t}$ will be designed in such a way to satisfy the following property for every $i \geq 0$: 

\begin{quote} $(P)$: If $ \mathcal{H}_{i}(\mathbf{t}) \neq \mathbf{t}$, there exists an integer $r \geq 0$ such that all the vertices of the cycles of $ \mathcal{H}_{i}(\mathbf{t})$ are at distance either $r$ or $r+1$ from the external boundary of $ \mathcal{H}_{i}(\mathbf{t})$. In addition, the set of all the vertices with height $r$ of every cycle forms a connected interval inside this cycle.
\end{quote}

We now describe the corresponding algorithm $ \mathcal{A}$. First, if $ \mathcal{H}_{i}(\mathbf{t}) = \mathbf{t}$, set $ \mathcal{A}(\mathcal{H}_{i})=\dagger$. Otherwise, $\mathcal{H}_{i}(\mathbf{t}) \neq \mathbf{t}$ and if $ \mathcal{H}_{i}(\mathbf{t})$ satisfies $(P)$, then, for a certain $r \geq 0$, all the edges on the cycles of $ \mathcal{H}_{i}( \mathbf{t})$ are of the form $(r,r)$, $(r,r+1)$, $(r+1,r)$ or $(r+1,r+1)$ depending on the heights of their vertices read in clockwise order. The algorithm $ \mathcal{A}$ then peels a (deterministic) edge of the form $(r+1,r)$. If there is no such edge on the boundary of the cycles that means that they are all of form $(r+1,r+1)$ or $(r,r)$; in such case we peel any (deterministic) edge on the boundary of $ \mathcal{H}_{i}( \mathbf{t})$ of the form $(r,r)$.  
By induction, it is easy to check that $(P)$ holds for $ \mathcal{H}_{i}(\mathbf{t})$ for every $i \geq 0$.

Finally, for every $r \geq 0$, we introduce the stopping time $\theta_{r}$ as the first time $i \geq 0$ when all the vertices of all the cycles of $ \mathcal{H}_{i}$ have height at least $r$. We claim that 
 \begin{eqnarray} \label{eq:stoppingtimeboule} B_{r}( \mathbf{t}) = \mathcal{H}_{\theta_{r}}.  \end{eqnarray}
Indeed, for $ r \geq 0$, it is plain that all the faces in $ \mathcal{H}_{\theta_{r}}$ have a vertex a height at most $r-1$. Conversely, all the vertices in $ \mathbf{t} \backslash \mathcal{H}_{\theta_{r}}$ are at distance at least $r$ from the original boundary. Note that here it is important that in $B_{r}( \mathbf{t})$, by definition, the edges between two vertices at distance $r$ which do not belong to a same cycle are split into two edges enclosing a $2$-gon. Indeed, it may happen that a hole of perimeter $2$ in $ \mathcal{H}_{\theta_{r}}$ is later filled-in with the trivial triangulation and thus giving rise to a single edge in $ \mathbf{t}$.
\begin{proposition}\label{prop:marth} Under $ \mathbb{P}^{(p)}$, the process $(M_{\theta_{r}})_{r \geq 0} $ is  $(\mathcal{F}_{n})_{n \geq 0}$ martingale.
\end{proposition}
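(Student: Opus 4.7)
The plan is to reduce the claim to an immediate application of Corollary~\ref{cor:stopping} with the sequence of stopping times $(\theta_r)_{r\geq 0}$. The three things that need checking are: (a) that each $\theta_r$ is an $(\mathcal{F}_n)_{n\geq 0}$ stopping time and the sequence is increasing; (b) that $\theta_r<\infty$ a.s.\ under $\mathbb{P}^{(p)}$; and (c) that $\theta_r<\infty$ a.s.\ under $\mathbb{P}^{(p)}_{\infty}$.

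For (a), the event $\{\theta_r\leq n\}$ depends only on the heights of vertices lying on the cycles of $\mathcal{H}_0,\ldots,\mathcal{H}_n$, which are measurable functions of the rigid triangulations with holes $\mathcal{H}_0,\ldots,\mathcal{H}_n$; so $\theta_r$ is an $(\mathcal{F}_n)$-stopping time, and monotonicity in $r$ is immediate from the definition. For (b), since a Boltzmann triangulation $T^{(p)}$ has almost surely finitely many faces, the branching peeling by layers terminates in finitely many steps and in particular $\theta_r\leq \theta_\infty <\infty$ a.s.\ under $\mathbb{P}^{(p)}$.

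Step (c) is the main point. By the identification \eqref{eq:stoppingtimeboule}, $\theta_r$ is finite if and only if the ball $B_r$ has been entirely discovered, i.e.\ $B_r(\mathbf{t})$ contains only finitely many faces; equivalently, it suffices to show that under $\mathbb{P}^{(p)}_{\infty}$ the ball $B_r$ of the UIPT of the $p$-gon is a.s.\ finite. This is a standard property of the UIPT: the local limit $T^{(p)}_\infty$ is one-ended, and any ball of fixed radius in a one-ended infinite triangulation with a simple boundary is finite. Alternatively, one can invoke the convergence \eqref{def:uipt} together with the fact that the number of faces of $B_r(T^{(p)}_n)$ is tight as $n\to\infty$, which is a consequence of the peeling estimates already recalled (the perimeter processes associated to the peeling by layers do not blow up in finite time; cf.\ \cite{CLGpeeling}).

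With (a)--(c) in hand, Corollary~\ref{cor:stopping} applied to $\tau_n=\theta_n$ directly yields that $(M_{\theta_r})_{r\geq 0}$ is a martingale with respect to the filtration $(\mathcal{F}_{\theta_r})_{r\geq 0}$ under $\mathbb{P}^{(p)}$. The only mildly delicate ingredient is the finiteness of balls in the UIPT of the $p$-gon under the branching exploration; once this is granted, the martingale property is a consequence of the general optional sampling statement proved earlier.
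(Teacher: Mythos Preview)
Your proof is correct and follows essentially the same approach as the paper: the paper's own argument is the one-line observation that Corollary~\ref{cor:stopping} applies because under $\mathbb{P}^{(p)}_{\infty}$ the ball $B_r(\mathbf{t})$ is almost surely finite, hence $\theta_r<\infty$ a.s. You have simply spelled out in more detail the verification of the hypotheses (a)--(c), which the paper leaves implicit; in particular your identification of (c) as the only nontrivial point, and its reduction to local finiteness of the UIPT, matches the paper exactly.
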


This is a simple consequence of Corollary \ref{cor:stopping}, since, under $ \mathbb{P}^{(p)}_{\infty}$,  $\theta_{r}<\infty$ almost surely because  $B_{r}(\mathbf{t})$ is almost surely finite.

We mention that this martingale appears in  \cite[Theorem 4]{CLGpeeling} in the case of type II triangulations (no loops) of the sphere. Note also that the scaling factor is different, as the martingale is normalized to start from $1$ in \cite{CLGpeeling}, and that in \cite{CLGpeeling} the definition of the ball of radius $r$ is slightly different (edges joining vertices of the same height  belonging to a cycle are not split into a $2$-gon) introducing a somehow different factor of the holes of perimeter $2$.

\subsection{Scaling limits for the locally largest cycle at given heights}
\label{sec:haut}

Our goal is to understand the genealogical tree structure of cycles explored during the branching peeling by layers of a large Boltzmann triangulation. To this end, we start by focusing on the evolution of a distinguished cycle, namely the (locally) largest cycle at each step. As in Section~\ref{sec:llc}, we denote by $(\mathscr{C}^{\ast}(i))_{i \geq 0}$ the sequence of locally largest cycles obtained when using the peeling by layers algorithm (and started with the initial boundary of $\mathbf{t}$),
and let $H_{n}$ the minimal height of a vertex of  $ \mathscr{C}^{\ast}(n)$.  Then, for $r \geq 0$ we denote by $\theta^\ast(r)$  the first time $k \geq 0$ when $H_{k} \geq r$, and we finally let $L(r)=|\mathscr{C}^{\ast}(\theta^\ast(r))|$ be the perimeter at height $r$  of the locally largest cycle.

Recall from the Introduction the definition of the self-similar process $ {{X}}$ and from \eqref{eq:ta} the definition of $\tone$ and $\aone$. Finally, let $(L^{(p)}(r); r \geq 0)$ be a random variable distributed as $(L(r); r \geq 0)$ under $ \mathbb{P}^{(p)}$.
  
\begin{proposition}[Scaling limit for the locally largest cycle at heights] \label{prop:hauteur} 
The convergence
$$ \left( \frac{1}{p} L^{(p)}\left([ \sqrt{p} t] \right) ; t \geq 0 \right)  \quad \mathop{\longrightarrow}^{(d)}_{p \rightarrow \infty} \quad   \left(  {{X}} \left(  \frac{2 \tone}{\aone} \cdot t  \right) ; t \geq 0 \right) $$
holds in distribution in $\mathbb{D}(\mathbb{R}_{+},\mathbb{R})$.
\end{proposition}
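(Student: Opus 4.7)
The plan is to deduce Proposition \ref{prop:hauteur} from Proposition \ref{prop:scalingllcycle} via a time change from peeling-step time to height time, mirroring the Lamperti identity \eqref{eq:tildextox} that relates the index $-3/2$ process $\widetilde{X}$ to the index $-1/2$ process $X$. The bridge is the height functional along the tracked cycle: with $H_n$ the minimum height of a vertex on $\mathscr{C}^{\ast}(n)$, one has $\theta^{\ast}(r)=\inf\{n\geq 0: H_n \geq r\}$ and $L^{(p)}(r) = L(\theta^{\ast}(r))$, where $L(n) = |\mathscr{C}^{\ast}(n)|$.

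The main intermediate step is to upgrade Proposition \ref{prop:scalingllcycle} to the joint height invariance principle
\[
\left( \frac{L([p^{3/2} t])}{p}, \ \frac{H_{[p^{3/2} t]}}{\sqrt{p}} \right)_{t\geq 0} \ \xrightarrow[p\to\infty]{(d)} \ \left( \widetilde{X}(2\tone t), \ \aone \int_0^t \frac{\d u}{\widetilde{X}(2\tone u)} \right)_{t\geq 0}.
\]
The heuristic is that the peeling by layers advances by one unit of height on a cycle of perimeter $\ell$ after of order $\ell/\aone$ peeling steps, so that $\d H/\d n \approx \aone / L(n)$. To make this precise, I would adapt the UIPT argument of \cite[Section~4.2]{CLGpeeling} to the Boltzmann setting using the local absolute continuity \eqref{eq:radonnikodym} provided by the cycle martingale of Proposition \ref{prop:martingale}: on stopping times chosen strictly before the tracked cycle dies, the Radon--Nikodym density $M_n/f(p)$ stays bounded in $\mathbb{L}^1$, which allows one to transfer the pathwise estimates on $H_n$ known under $\mathbb{P}^{(p)}_\infty$ back to $\mathbb{P}^{(p)}$. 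The explicit value $\aone = 1/(2\sqrt{3})$ appears as the asymptotic expected height increment per peeling step multiplied by the current perimeter, computed directly from the transitions $b_k^{(m)}$ of Section~\ref{sec:onestepboltz} as $m\to\infty$ together with \eqref{eq:variation=0}.

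Once the joint invariance principle is secured, we conclude by inverting the height clock. Almost surely, the limiting height $t \mapsto \aone \int_0^t \d u / \widetilde{X}(2\tone u)$ is continuous and strictly increasing on $[0,\zeta)$, where $\zeta$ is the absorption time of $\widetilde{X}$ at $0$, so its inverse is a continuous functional in the Skorokhod topology. The continuous mapping theorem then yields
\[
\frac{L^{(p)}(r\sqrt{p})}{p} = \frac{L(\theta^{\ast}(r\sqrt{p}))}{p} \ \xrightarrow[p\to\infty]{(d)} \ \widetilde{X}(2\tone \widetilde{\theta}(r)),
\]
where $\widetilde{\theta}(r)$ is characterized by $\aone \int_0^{\widetilde{\theta}(r)} \d u/\widetilde{X}(2\tone u) = r$. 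The change of variable $s = 2\tone u$ in the defining equation gives $\int_0^{2\tone \widetilde{\theta}(r)} \d s/\widetilde{X}(s) = \frac{2\tone}{\aone} r$, and the Lamperti identity \eqref{eq:tildextox} identifies the right-hand side as $X\bigl(\frac{2\tone}{\aone} r\bigr)$, which is the claimed limit.

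The principal obstacle is the height LLN above: pinning down the exact value of $\aone$ from the one-step transitions, and then controlling the fluctuations of $H_n$ uniformly enough to upgrade from one-dimensional convergence to joint functional convergence in $\mathbb{D}(\mathbb{R}_+,\mathbb{R}^2)$. Everything beyond that point is a continuous mapping argument combined with the Lamperti identification.
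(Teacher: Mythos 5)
Your overall strategy coincides with the paper's: establish a joint invariance principle for $(L(n),H_n)$ by transferring height estimates from the UIPT to the Boltzmann setting via the cycle-martingale absolute continuity, then invert the height clock and identify the limit through the Lamperti relation \eqref{eq:tildextox}. However, there is a genuine gap in the way you propose to carry out the transfer, and a missing ingredient at the end.

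First, your claim that ``the Radon--Nikodym density $M_n/f(p)$ stays bounded in $\mathbb{L}^1$'' is beside the point: any nonnegative martingale started from $1$ is trivially $\mathbb{L}^1$-bounded. What the transfer argument actually requires is a \emph{lower} bound on the density $f(\widetilde L_n)/f(p)$, since one needs to convert $\mathbb{P}^{(p)}_\infty(A)$ small into $\mathbb{P}^{(p)}(A)$ small. This lower bound holds only while the perimeter of the tracked cycle stays comparable to $p$; hence the paper's intermediate statement (Lemma \ref{LEM:HAUTEPS}) is formulated with the process stopped at the first time $\widetilde\sigma^{(p)}_\varepsilon$ when the perimeter drops below $\varepsilon p$ (at which point the perimeter is still at least $\varepsilon p/2$, so $f(\widetilde L_{\widetilde\sigma^{(p)}_\varepsilon})/f(p)$ is bounded below by a constant depending on $\varepsilon$). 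Your proposed joint invariance principle without any cutoff does not follow from this absolute-continuity route; in fact, the paper remarks that the version with $\varepsilon=0$ requires Theorem \ref{thm:no tentacles}, which is proved \emph{after} Proposition \ref{prop:hauteur} and would make your outline circular.

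Second, once one works with the $\varepsilon$-cutoff, passing from the stopped convergence to the full convergence in $\mathbb{D}(\mathbb{R}_+,\mathbb{R})$ is a separate step that you do not address. The paper handles it by a ``no return'' estimate: after $L^{(p)}$ first drops below $\varepsilon p$, the probability that it subsequently exceeds $c\varepsilon p$ can be made arbitrarily small uniformly in $p$ by taking $c$ large; this follows from the supermartingale property of $f(L^{(p)}(n))$ (Corollary \ref{cor:stopping}) and optional sampling, and the analogous fact for the limit from $\Psi(3)\leq 0$. Without this, the cutoff cannot be removed. Your Lamperti endgame and the identification of $\aone$ via the one-step transitions and \eqref{eq:variation=0} are correct, but the two points above constitute a real gap in the argument as written.
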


The proof goes along the same lines as \cite[Section~4]{CLGpeeling}: one first proves an invariance principle  for the sequence $(|\mathscr{C}^{\ast}(n)| ; n \geq 0)$ in Proposition \ref{prop:scalingllcycle}, one then establishes a scaling limit for the time-changes $( \theta^{\ast}(r); r \geq 0)$ and the conclusion follows by combining these two limit theorems. More precisely for $p\geq 1$, we consider the evolution of a locally largest cycle $ \mathcal{C}^*(n)$ and its height process $ \theta^*(r)$  under the Boltzmann measure and abusing a little notation we put under $ \mathbb{P}^{(p)}$ $$H^{(p)}(n)= H(n), \qquad \widetilde{L}^{(p)}(n) = | \mathcal{C}^*(n)|, \qquad L^{(p)}(r) = \widetilde{L}^{(p)}( \theta^*_{r})$$ so that $H^{(p)},\widetilde{L}^{(p)}$ and $ L^{(p)}$ are now living on the same probability space. We already have a scaling limit for $\widetilde{L}$ given in Proposition \ref{prop:scalingllcycle} where we recall that $ {{\widetilde X}}$ is the process defined by \eqref{eq:defX}.  In particular, setting $\widetilde{\sigma}^{(p)}_{ \varepsilon} = \inf \{ i \geq 0 : {{\widetilde L}}^{(p)}_{i} \leq \varepsilon p\}$ and  $ \widetilde{\sigma}_{\varepsilon}=\inf \{ t \geq 0 : {{\widetilde X}}(2 \tone t) \leq \varepsilon\}$, the convergence
\begin{equation}
\label{eq:limtau} \frac{1}{p^{3/2}} \cdot \widetilde{\sigma}^{(p)}_{ \varepsilon}  \quad \mathop{\longrightarrow}^{(d)}_{p \rightarrow \infty} \quad  \widetilde{\sigma}_{ \varepsilon}
\end{equation} holds in distribution, jointly with \eqref{eq:scaling}. We put similarly  $\sigma^{(p)}_{ \varepsilon} = \inf\{ r \geq 0: L^{(p)}(r) \leq \varepsilon p\}$ and $ \sigma_{\varepsilon}= \inf \{t \geq 0 : {{X}}(t) \leq \varepsilon \}$.

   The main ingredient to establish Proposition \ref{prop:hauteur} is the following result:
     \begin{lemma}  \label{LEM:HAUTEPS} For every $\varepsilon>0$, jointly with \eqref{eq:scaling} and \eqref{eq:limtau}, the convergence  
$$ \left( \frac{1}{ \sqrt{p}}H_{[p^{3/2}t] \wedge \widetilde{\sigma}^{(p)}_{ \varepsilon}}^{(p)} ; t \geq 0\right)    \quad \mathop{\longrightarrow}^{(d)}_{p \rightarrow \infty} \quad  \aone \cdot \left(\int_{0}^{t \wedge \widetilde{\sigma}_{ \varepsilon}} \frac{ \mathrm{d}s}{ {{\widetilde X}}(2 \tone s)} ; t \geq 0\right)$$
holds in distribution in $\mathbb{D}(\mathbb{R}_{+},\mathbb{R})$.
  \end{lemma}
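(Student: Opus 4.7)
I would follow the strategy developed for the UIPT in \cite[Section~4]{CLGpeeling} and transfer it to the Boltzmann setting by means of the cycle martingale. The cutoff at $\widetilde{\sigma}^{(p)}_\varepsilon$ forces $|\mathscr{C}^{\ast}(k)|\geq \lfloor \varepsilon p\rfloor$ for every $k<\widetilde{\sigma}^{(p)}_\varepsilon$, so by \eqref{eq:radonnikodym} together with the remark following Proposition \ref{prop:martingale}, the law of the stopped exploration $(\mathcal{H}_{k})_{k\leq n\wedge\widetilde{\sigma}^{(p)}_\varepsilon}$ under $\mathbb{P}^{(p)}_{\infty}$ is absolutely continuous with respect to its law under $\mathbb{P}^{(p)}$, with Radon--Nikodym density bounded above and below by constants depending only on $\varepsilon$. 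Consequently any $L^{1}$-small quantity or any tightness statement can be freely transported between the two measures, so that we may analyze the peeling under $\mathbb{P}^{(p)}_{\infty}$, where the locally largest cycle coincides with the distinguished cycle surrounding the infinite triangulation, i.e. precisely the object studied in \cite{CLGpeeling}.

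\textbf{Key quenched estimate.} The main technical input to establish is that, conditionally on $\mathcal{F}_{k-1}$ and on $|\mathscr{C}^{\ast}(k-1)|=\ell$, the increment $\Delta H^{(p)}_{k}:=H^{(p)}(k)-H^{(p)}(k-1)\in\{0,1\}$ satisfies
\[
\mathbb{E}^{(p)}\bigl[\Delta H^{(p)}_{k}\,\big|\,\mathcal{F}_{k-1}\bigr]=\frac{\aone+o(1)}{\ell}\qquad \text{as } \ell\to\infty,
\]
uniformly in the state of the exploration. The constant $\aone=\frac{1}{2\sqrt{3}}$ is precisely the asymptotic rate at which the peeling-by-layers algorithm ``completes a full turn'' around a very long cycle (so advancing its minimal height by one): it is extracted from a stationary analysis of the turning algorithm on the uniform infinite half-planar triangulation, using the one-step transitions $(q_{k})$ of Section~\ref{sec:onestepboltz} and the zero-drift identity \eqref{eq:variation=0}. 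This is exactly the computation carried out in \cite[Section~3--4]{CLGpeeling}, and the remark on local absolute continuity above explains why the same estimate holds under $\mathbb{P}^{(p)}$ with the same constant.

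\textbf{From the estimate to the convergence.} Consider the Doob decomposition $H^{(p)}(n)=A^{(p)}_{n}+\mathcal{M}^{(p)}_{n}$ where $A^{(p)}_{n}=\sum_{k=0}^{n-1}\mathbb{E}^{(p)}[\Delta H^{(p)}_{k+1}\mid\mathcal{F}_{k}]$ is the predictable compensator and $\mathcal{M}^{(p)}$ is the associated martingale. The key estimate together with the joint convergences \eqref{eq:scaling} and \eqref{eq:limtau} yields, on $\{n\leq \widetilde{\sigma}^{(p)}_\varepsilon\}$,
\[
\frac{1}{\sqrt{p}}\,A^{(p)}_{[p^{3/2}t]\wedge \widetilde{\sigma}^{(p)}_\varepsilon}\;=\;\frac{1}{\sqrt{p}}\sum_{k=0}^{[p^{3/2}t]\wedge \widetilde{\sigma}^{(p)}_\varepsilon-1}\frac{\aone+o(1)}{|\mathscr{C}^{\ast}(k)|}\,,
\]
which is a Riemann sum converging (by the continuous mapping theorem applied to the functional $x\mapsto \int_{0}^{\cdot\wedge\widetilde{\sigma}_{\varepsilon}}\mathrm{d}s/x(2\tone s)$, which is continuous on càdlàg paths staying above $\varepsilon$ on $[0,\widetilde{\sigma}_{\varepsilon}]$) to $\aone\int_{0}^{t\wedge\widetilde{\sigma}_{\varepsilon}}\mathrm{d}s/\widetilde{X}(2\tone s)$, jointly with \eqref{eq:scaling} and \eqref{eq:limtau}. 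For the martingale part, since $\Delta H^{(p)}_{k}\in\{0,1\}$ has conditional variance $O(1/\ell)$, we have $\langle \mathcal{M}^{(p)}\rangle_{n}=O(n/(\varepsilon p))$, so that for $n=[p^{3/2}t]\wedge \widetilde{\sigma}^{(p)}_\varepsilon$, $\mathbb{E}[(\mathcal{M}^{(p)}_{n}/\sqrt{p})^{2}]=O(p^{3/2}/(\varepsilon p\cdot p))=O(1/(\sqrt{p}\,\varepsilon))\to 0$. Tightness in $\mathbb{D}(\mathbb{R}_{+},\mathbb{R})$ follows at once from the monotonicity of $H^{(p)}$ and the tightness (in fact convergence) of its compensator.

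\textbf{Main obstacle.} The delicate step is the quenched estimate for the conditional expectation of $\Delta H^{(p)}_{k}$ together with the identification of $\aone$: this requires a careful renewal/stationary analysis of the peeling-by-layers dynamics on a macroscopically long cycle, showing that the expected number of peeling steps needed to advance the minimal height by one is asymptotically $\ell/\aone$ with uniformly bounded fluctuations. All these ingredients are essentially contained in \cite[Section~4]{CLGpeeling} for the UIPT; transferring them to the branching Boltzmann setting is precisely the role of the cutoff $\widetilde{\sigma}^{(p)}_\varepsilon$ and of the absolute-continuity argument based on the cycle martingale of Proposition \ref{prop:martingale}.
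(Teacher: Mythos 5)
Your overall strategy — reduce to the UIPT peeling of \cite{CLGpeeling} via absolute continuity, using the $\varepsilon p$ cutoff to keep the Radon--Nikodym derivative under control — is the same as the paper's. But the central technical step you formulate, the ``quenched estimate''
$\mathbb{E}^{(p)}[\Delta H^{(p)}_{k}\mid\mathcal{F}_{k-1}]=(\aone+o(1))/\ell$ uniformly in the state, is false. In the peeling-by-layers algorithm, $H^{(p)}_{k}$ increases only when the last vertex of the locally largest cycle at the current minimal height $r$ is removed; whenever more than one such vertex remains (which is the case for most steps), $\Delta H^{(p)}_{k}=0$ deterministically given $\mathcal{F}_{k-1}$. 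So the conditional increment jumps between $0$ and $O(1)$ and is nowhere near $\aone/\ell$ pointwise. The correct input is a renewal-type statement at the scale of whole layers — the number of peeling steps to advance the minimal height by one is $\sim\ell/\aone$ — which is precisely the hard part of \cite[Section~4]{CLGpeeling}. Once you replace your pointwise estimate by the renewal statement, the rewriting of $A^{(p)}_{n}/\sqrt{p}$ as a Riemann sum $\frac{1}{\sqrt p}\sum_{k}\frac{\aone+o(1)}{|\mathscr{C}^{\ast}(k)|}$ no longer follows from your Doob decomposition, and your compensator analysis has a genuine gap. The paper avoids this pitfall by \emph{not} computing a step-by-step compensator: it directly transfers the supremum-norm in-probability comparison \eqref{eq:Hintegral} between $H^{(p)}$ and the Riemann sum from the analogous statement for the UIPT, which is where the whole renewal analysis actually lives.

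Two smaller points. First, the paper works with a modified exploration $\overline{\mathcal{H}}$ that follows only the locally largest cycle and fills in the smaller hole at each split, so that the Radon--Nikodym density reduces to $f(\widetilde{L}^{(p)})/f(p)$ and the ``locally largest cycle = infinite-end cycle'' constraint can be handled through an inequality with the filled-in UIPT peeling of \cite{CLGpeeling}; your argument does not introduce this modified exploration and so does not quite justify the identification ``locally largest = distinguished infinite cycle''. Second, your claim that the Radon--Nikodym density is bounded \emph{above} by an $\varepsilon$-dependent constant is not correct for the full branching exploration (the density is $M_n/f(p)$, which can grow since the total boundary length does), but this is harmless here: only the lower bound $\geq c(\varepsilon)$ is used to transfer smallness of probabilities from the UIPT to the Boltzmann measure.
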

  
\begin{remark} Contrary to Proposition \ref{prop:scalingllcycle}, our proof of Lemma \ref{LEM:HAUTEPS} does not imply convergence of the rescaled absorption time of $L^{(p)}(\cdot)$ at $0$ towards that of $X(2 \tone /\aone\cdot)$ because of the cutoff. The statement of the last lemma is however still true for $\varepsilon=0$ and this can be proved  using Theorem \ref{thm:no tentacles}. Since we do not need this fact, we do not enter details.
\end{remark}
  
We start by explaining how Proposition \ref{prop:hauteur} simply follows from Lemma \ref{LEM:HAUTEPS} by the Lamperti transformation.
  
  \begin{proof}[Proof of Proposition \ref{prop:hauteur}]For every $t \geq 0$ we set
$$\rho(t)= \inf  \left\{ u \geq 0 ; \int_{0}^{u} \frac{1}{ {{\widetilde X}}(2 \tone s)}  \d s =t \right\}. $$
By Lemma \ref{LEM:HAUTEPS} and the a.s.~strict monotonicity of $ t \in [0, \widetilde{\sigma}_{ \varepsilon}]\mapsto  \int_{0}^t  \mathrm{d}s/\widetilde X(2 \tone s)$ we have
\begin{equation}
\label{eq:theta} \left(  \frac{1}{p ^{3/2}} \theta^{\ast}( \fl{\sqrt{p} t} \wedge \sigma_{\varepsilon}^{(p)}) ; \ t \geq 0 \right)  \quad \mathop{\longrightarrow}^{(d)}_{p \rightarrow \infty} \quad \left( \rho(t/\aone) \wedge \widetilde{\sigma}_{\varepsilon} ; \ t \geq 0 \right).
\end{equation}
Recalling that $L^{(p)}(r)={{\widetilde L}}^{(p)}_{\theta^{*}(r)}$, by combining the last convergence with \eqref{eq:scaling}, we get that 
\begin{equation}
\label{eq:cvi} \left( \frac{1}{p} L^{(p)}\left( [ \sqrt{p} t] \wedge \sigma^{(p)}_{ \varepsilon}\right) ; t \geq 0 \right)  \quad \mathop{\longrightarrow}^{(d)}_{p \rightarrow \infty} \quad  \left( {{\widetilde X}} \left(  2 \tone( {\rho(t/\aone)} \wedge \widetilde{\sigma}_{\varepsilon} \right)  ; t \geq 0 \right);
\end{equation}
see Section~6.1 of Ethier and Kurtz \cite{EK86}. 
It is a simple matter to deduce from \eqref{eq:tildextox} that
$$ \left( {{\widetilde X}} \left(  2\tone (\rho(t)\wedge \widetilde{\sigma}_{\varepsilon} )\right)   ; t \geq 0 \right)   \quad \mathop{=}^{(d)} \quad  \left( {{X}} \left(  (2 \tone t) \wedge \sigma_{\varepsilon} \right)   ; t \geq 0 \right),$$
so we get from \eqref{eq:cvi} that the weak convergence stated in Proposition \ref{prop:hauteur} holds provided that on both sides, we stop the processes at the first instant when they become smaller than $\varepsilon$. 

To complete the proof, it suffices to observe that the probability that these processes exceed $c\varepsilon$ after that time can be made as small as we wish (uniformly in $p$), 
by choosing $c$ sufficiently large. Indeed, for the process $ \frac{1}{p} L^{(p)}\left([ \sqrt{p} t]\right) $, this follows from the fact that $f(L^{(p)}(n))$ is a super-martingale (thanks to Corollary \ref{cor:stopping}) and the optional sampling theorem. The argument for the self-similar Markov process $X$ is similar. Specifically, note first that $\Psi(3)\leq 0$ (indeed, in the notation of the forthcoming Section \ref{sec:cell-systems}, we have $\Psi\leq \kappa$ and $\kappa(3)=0$)
and hence the process $\exp(3\xi(t))$ is a super-martingale. We deduce from Lamperti's transformation  that $X^3(t)$ is also a super-martingale, and same conclusion follows. This completes the proof of our statement. 
\end{proof}

\begin{proof}[Proof of Lemma \ref{LEM:HAUTEPS}] The result will follow from the work \cite{CLGpeeling} and absolute continuity relations between peeling explorations in the UIPT and in Boltzmann triangulations. More precisely, we introduce a modified peeling process that only peels along the locally largest cycle: Denote by $ \overline{\mathcal{H}}_{0} \subset \cdots \overline{\mathcal{H}}_{n} \subset \cdots \subset \mathbf{t}$  a branching peeling exploration of $\mathbf{t}$ obtained by using the peeling by layers algorithm $ \mathcal{A}$, but with the following modification: $ \overline{\mathcal{H}}_{0}$ is still the boundary of the external face of $\mathbf{t}$, but, for every $i \geq 0$,  if  $\mathcal{A}(\overline{\mathcal{H}}_{i}) \ne \dagger$,  $  \overline{\mathcal{H}}_{i+1}$ is defined to be the triangulation with holes obtained from $ \overline{\mathcal{H}}_{i}$ by peeling the edge $ \mathcal{A}(  \overline{\mathcal{H}}_{i})$, and when a peeling event of type $\mathsf{G}_{k}$ occurs, by also \emph{by filling-in the hole adjacent to the cycle of smallest perimeter among the two newly created cycles}. Note that for every $i \geq 0$, $ \overline{\mathcal{H}}_{i}$ has at most one cycle, which is precisely $ \mathscr{C}^{\ast}(i)$. 

When we apply this algorithm to the triangulation $T^{(p)}$ we recover the above processes $ \widetilde{L}^{(p)}$ and $ H^{(p)}$ respectively as the perimeter of the single hole of  $ \overline{\mathcal{H}}_{n}$ and the minimal height of a vertex on it. We will show that for every $ \varepsilon>0, t_{0}>0$ and $\delta>0$ we have 
  \begin{eqnarray} \label{eq:Hintegral}   \mathbb{P}\left( \sup_{0 \leq t \leq t_{0}}\frac{1}{ \sqrt{p}} \left|  H^{(p)}_{[p^{3/2}t] \wedge \widetilde{\sigma}^{(p)}_{ \varepsilon}} - \aone \int_{0}^{t \wedge ( p^{-3/2} \widetilde{\sigma}_{ \varepsilon}^{(p)})} \frac{ \mathrm{d}s}{ p^{-1} \cdot \widetilde{L}^{(p)}_{[p^{3/2} s]}}\right|  \geq \delta\right) \xrightarrow[p\to\infty]{} 0.  \end{eqnarray}
Indeed, the statement of the lemma then easily follows by combining \eqref{eq:Hintegral} with  \eqref{eq:scaling}. 
To prove \eqref{eq:Hintegral}, note that by Corollary \ref{cor:peelinggenerallaw}, for every positive measurable function $F$ supported by sequences of finite triangulations having only one hole, we have
  \begin{eqnarray} \label{eq:backtouipt} \mathbb{E}^{(p)} \left[ F\big( (\overline{\mathcal{H}}_{k})_{ 0 \leq k \leq \widetilde{\sigma}_{  \varepsilon}^{(p)}}\big) \frac{ f( \widetilde{L}^{(p)}_{\widetilde{\sigma}_{  \varepsilon}^{(p)}})}{ f(p)} \right] = \mathbb{E}^{(p)}_{\infty} \left[ F\big( (\overline{\mathcal{H}}_{k})_{ 0 \leq k \leq \widetilde{\sigma}_{  \varepsilon}^{(p)}}\big) \mathbbm{1}_{ \widetilde{\sigma}_{  \varepsilon}^{(p)} <\infty} \right],  \end{eqnarray}  
  where we recall that $f$ was introduced in \eqref{def:f}. Notice that for the random variable appearing under the expectation in the right-hand side to be non-equal to $0$,  at each time $k \leq  \widetilde{\sigma}_{  \varepsilon}^{(p)}$ the boundary of $ \overline{\mathcal{H}}_{k}$ must be both the locally largest cycle and the cycle separating from infinity in the UIPT.
  In particular we have 
  $$ \mathbb{E}^{(p)}_{{\infty}} \left[ F\big( (\overline{\mathcal{H}}_{k})_{ 0 \leq k \leq \widetilde{\sigma}_{  \varepsilon}^{(p)}}\big) \mathbf{1}_{\widetilde{\sigma}_{  \varepsilon}^{(p)} <\infty} \right] \leq \mathbb{E}^{(p)}_{{\infty}} \left[ F\big( (\overline{\mathcal{H}}'_{k})_{ 0 \leq k \leq \widetilde{\varsigma}_{  \varepsilon}^{(p)}}\big)\mathbbm{1}_{\widetilde{\varsigma}_{  \varepsilon}^{(p)} <\infty}\right],$$
  where $ (\overline{\mathcal{H}}'_{n})_{n \geq 0}$ is the sequence of triangulations with a single hole obtained by peeling the UIPT of the $p$-gon with the peeling by layers algorithm along the cycle separating from infinity and filling-in the finite holes created during the process, and where  $\widetilde{\varsigma}_{ \varepsilon}^{(p)}$ is the first time when the perimeter drops below $ \varepsilon p$ during such an exploration.
  The process $\overline{\mathcal{H}}'$ is precisely the one studied in details in \cite[Section 4]{CLGpeeling} and we deduce from it that the analog of \eqref{eq:Hintegral} for the height process and perimeter process of $\overline{ \mathcal{H}}'$ holds. To finish the proof we use this fact together with \eqref{eq:backtouipt} and the fact that $f( \widetilde{L}^{(p)}_{\widetilde{\sigma}^{(p)}_{ \varepsilon}})/f(p)$ is bounded from below by a positive constant  depending only on $ \varepsilon$ since $f(p) \sim c p^3$ and  $\widetilde{L}^{(p)}_{\widetilde{\sigma}^{(p)}_{ \varepsilon}}> \varepsilon p /2$. \end{proof}

 \bigskip

In the rest of this section, unless explicitly mentioned, we work with the peeling by layers algorithm, which we denote by $ \mathcal{A}$. We will now show, roughly speaking, that for every $ \varepsilon>0$, with high probability as $p \rightarrow  \infty$, the structure of the cycles of a Boltzmann triangulation of the $p$-gon is well approximated (in various senses which will be made precise below) by the genealogical tree structure of cycles cut above all cycles that have perimeter less than $\varepsilon p$.

\subsection{Definition of the exploration with cutoff}

\label{sec:cutoff}

For every $c>0$, we consider the peeling by layers algorithm $ \mathcal{A}^{<c}$, defined exactly as $ \mathcal{A}$, but with the additional constraint that $ \mathcal{A}^{<c}$  may never select an edge that belongs to a cycle of length less than $c$. We denote by
$$ \mathcal{H}^{<c}_{0}(\mathbf{t}) \subset \cdots \subset \mathcal{H}^{<c}_{n}(\mathbf{t}) \subset \cdots \subset \mathbf{t}$$
the corresponding peeling process, which we call the \emph{branching peeling by layers frozen below level $c$}.  Intuitively speaking,  $ \mathcal{A}^{<c}$ yields the branching peeling by layers exploration, except that each time a new cycle of perimeter strictly less than $c$ is created, it is instantly frozen and is not explored in the sequel.
If $\mathbf{t}$ is a finite triangulation with a boundary, the branching peeling process associated with  $ \mathcal{A}^{<c}$ does not necessarily entirely explore  $\mathbf{t}$: we let $\tau_{c}=\min \{k \geq 0; \mathcal{A}^{<c}(\mathcal{H}^{<c}_k)=\dagger \}$ be the first time when branching peeling frozen below level $c$ stops, and let $\mathsf{Cut}( \mathbf{t}, c)= \mathcal{H}^{<c}_{\tau_{c}}( \mathbf{t})$ be the largest triangulation with holes obtained in this branching peeling process. To simplify notation, we denote by $\mathsf{C}_{1}^{ <c},\mathsf{C}_{2}^{ <c},\ldots$ the (possibly empty) collection of  cycles of $\mathsf{Cut}( \mathbf{t}, c)$. We emphasize that a given  cycle $\mathsf{C}_{i}^{ <c}$ is not necessarily a cycle of $ B_{r} ( \mathbf{t})$ for a certain $r \geq 0$, since it can be is ``in-between'' two successive layers.

\begin{figure}[!h]
 \begin{center}
  \includegraphics[height=6cm]{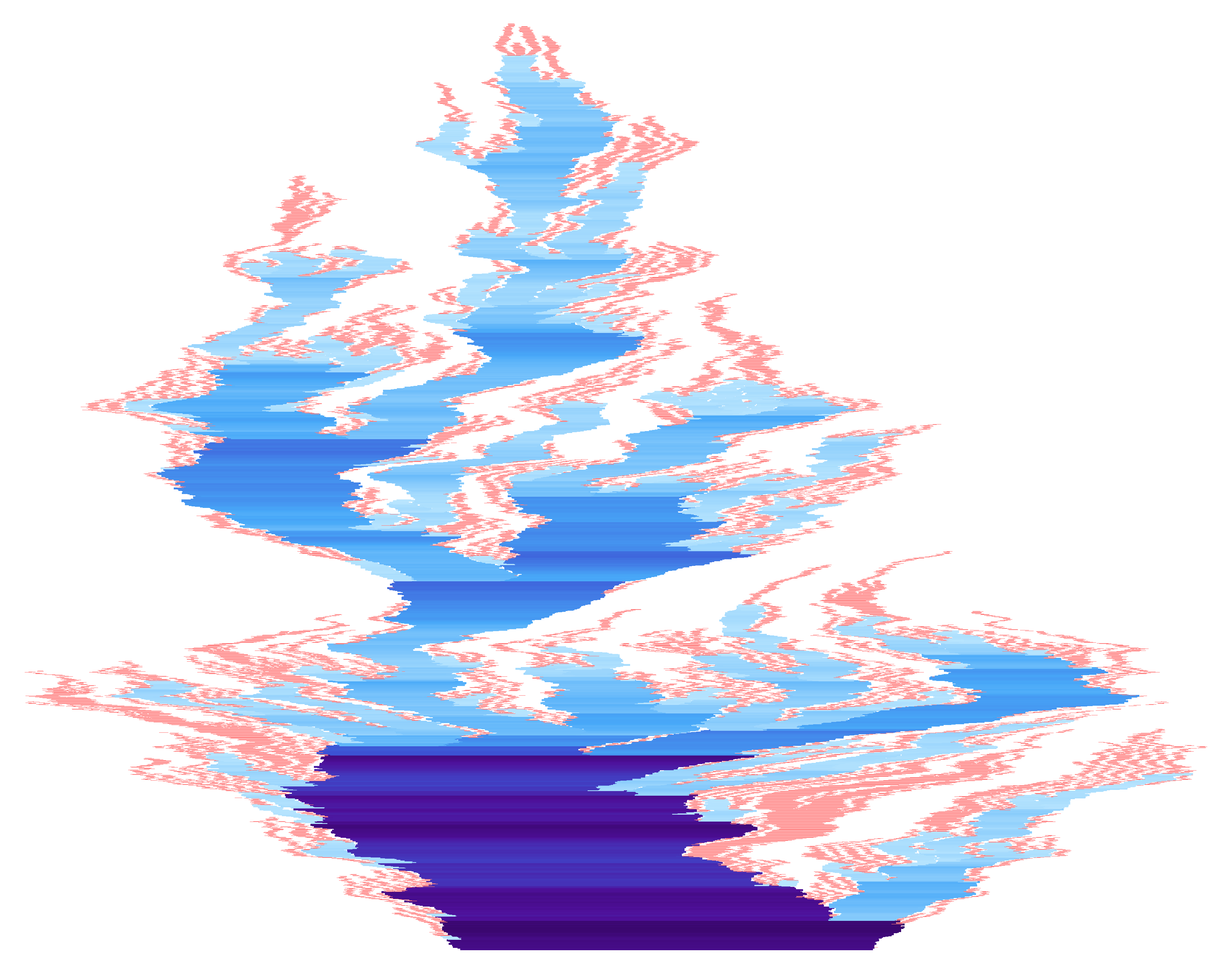}
 \caption{Illustration of the branching peeling process frozen below level $ \varepsilon p$ on an actual simulation (as in  Figure~\ref{fig:sim1}), the pink cycles are those that are not explored.} 
 \end{center}
 \end{figure}
 
 When $ \mathbf{t} = T^{(p)}$ is a Boltzmann triangulation of the $p$-gon we will take $ c = \varepsilon p$ with $ \varepsilon>0$ fixed but small. Recall the definition of the  function $f$ from \eqref{def:f}. The following lemma will play a crucial role in the estimation of various errors made by this cutoff:

\begin{lemma} \label{lem:epsexplo} We have
$$ \sup_{p \geq 1} p^{-3} \Esp{\sum_{i \geq 1} f( | \mathsf{C}_{i}^{ < \varepsilon p}|)}   \quad \mathop{\longrightarrow}_{\varepsilon \rightarrow 0} \quad  0.$$
\end{lemma}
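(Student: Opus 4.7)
My plan is to exploit the two martingales of Section~\ref{sec:martingales} simultaneously. First, I recognize that $\sum_{i \geq 1} f(|\mathsf{C}_i^{<\varepsilon p}|)$ is exactly the cycle martingale $M_n$ evaluated at the stopping time $\tau = \tau_{\varepsilon p}$. This stopping time is a.s.\ finite under $\mathbb{P}^{(p)}$ since the Boltzmann triangulation is a.s.\ finite, and by definition of the cutoff algorithm every surviving cycle at time $\tau$ satisfies $|\mathsf{C}_i^{<\varepsilon p}| < \varepsilon p$.

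The key observation is the algebraic identity
$$f(\ell) \;=\; \frac{\sqrt{6}}{3\sqrt{\pi}} \cdot \ell \cdot g(\ell), \qquad \ell \geq 1,$$
which follows by direct inspection of the explicit formulas for $f$ (see~\eqref{def:f}) and $g$: for $\ell \geq 2$ one has $f(\ell)/g(\ell) = \tfrac{\sqrt{6}}{3\sqrt{\pi}} \ell$, and the same ratio holds for $\ell=1$ by a short numerical check. Combined with the cycle-length bound at time $\tau$, this yields
$$M_\tau \;=\; \sum_{i} f\bigl(|\mathsf{C}_i^{<\varepsilon p}|\bigr) \;\leq\; \frac{\sqrt{6}}{3\sqrt{\pi}} \, \varepsilon p \cdot \sum_i g\bigl(|\mathsf{C}_i^{<\varepsilon p}|\bigr) \;\leq\; \frac{\sqrt{6}}{3\sqrt{\pi}}\, \varepsilon p \cdot V_\tau,$$
the last step being immediate from the definition of $V_n$ since $|\mathcal{H}_\tau|\geq 0$.

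Finally, I would apply the optional stopping theorem to the uniformly integrable martingale $V$ from Proposition~\ref{prop:martingalel2} to obtain $\mathbb{E}^{(p)}[V_\tau] = V_0 = g(p) = O(p^2)$ uniformly in $p$. Taking expectations in the previous display and dividing by $p^3$ gives $p^{-3}\,\mathbb{E}^{(p)}[M_\tau] = O(\varepsilon)$ uniformly in $p \geq 1$, which is the desired conclusion. The main difficulty, in my view, is identifying this strategy in the first place: the natural temptation is to use the identity \eqref{eq:stopping} to rewrite the left-hand side as $f(p)\,\mathbb{P}^{(p)}_{\infty}(\tau<\infty)$ and then to analyze that UIPT probability through the dynamics of the infinite-end cycle, but obtaining a uniform-in-$p$ bound along that route appears substantially more delicate (a direct computation suggests that the expected candidate supermartingale $1/f(Q)$ along the infinite-end chain is in fact a submartingale, due to the Radon--Nikodym factor $M_{n+1}/M_n$ mixing with the $f$-weighted selection of the infinite end). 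Trading $M$ against $V$ via the proportionality $f \propto \ell\cdot g$ sidesteps this issue and reduces the entire argument to a single invocation of the volume martingale.
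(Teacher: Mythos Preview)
Your argument is correct and takes a genuinely different route from the paper's.

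The paper rewrites $\mathbb{E}^{(p)}[M_{\tau_{\varepsilon p}}]$ via \eqref{eq:stopping} as $f(p)\,\mathbb{P}^{(p)}_{\infty}(\tau_{\varepsilon p}<\infty)$ and then controls this UIPT probability by showing that the perimeter of the cycle separating the boundary from infinity behaves as a random walk conditioned to stay positive; an appeal to results adapted from \cite{CLGpeeling} yields the bound $\mathbb{P}^{(p)}_{\infty}(\tau_{\varepsilon p}<\infty)\leq c\sqrt{\varepsilon}$. Your approach instead exploits the exact algebraic identity $f(\ell)=\tfrac{\sqrt{6}}{3\sqrt{\pi}}\,\ell\,g(\ell)$ (which indeed holds for all $\ell\geq 1$, including $\ell=1$) together with the constraint $|\mathsf{C}_i^{<\varepsilon p}|<\varepsilon p$ at the freezing time, to bound $M_{\tau_{\varepsilon p}}$ pointwise by a constant times $\varepsilon p\,V_{\tau_{\varepsilon p}}$. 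Optional stopping for the uniformly integrable volume martingale then gives $\mathbb{E}^{(p)}[V_{\tau_{\varepsilon p}}]=g(p)=O(p^2)$, hence $p^{-3}\mathbb{E}^{(p)}[M_{\tau_{\varepsilon p}}]=O(\varepsilon)$ uniformly in $p$. The case $\varepsilon p<1$, where the cut-off is vacuous and the sum vanishes, is handled implicitly.

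Your route is more elementary and entirely self-contained: it avoids the UIPT analysis and the external input from \cite{CLGpeeling}, and it yields the sharper rate $O(\varepsilon)$ rather than $O(\sqrt{\varepsilon})$. The paper's approach, on the other hand, makes the connection with the UIPT explicit, which fits the broader theme of the article (relating Boltzmann and UIPT explorations through the Radon--Nikodym structure of Proposition~\ref{prop:martingale}). Your observation that the ``natural'' UIPT route is more delicate is accurate in spirit, though the paper makes it work by invoking the conditioned-random-walk description of the infinite-end cycle rather than a direct supermartingale argument on $1/f$.
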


\begin{proof}  It $\mathbf{t}$ is a finite triangulation with a boundary and $0 \leq \varepsilon p<1$, first note that  $\mathsf{Cut}( \mathbf{t}, \varepsilon p)=\mathbf{t}$ has no cycles, so that $\sum_{i \geq 1} f( | \mathsf{C}_{i}^{ < \varepsilon p}|)=0$. Without loss of generality, we may therefore assume that $\varepsilon>0$, $p \geq 1/\varepsilon$. Let  $(M_{n})_{n \geq 1}$ be the cycle martingale associated with the peeling algorithm $ \mathcal{A}^{< \varepsilon p}$ defined by \eqref{eq:M}. Recall that $\tau_{\varepsilon p}$ is the time when the branching peeling frozen below level $\varepsilon p$ stops. Then, by definition of the cycle martingale, $ \Esp{\sum_{i \geq 1} f( | \mathsf{C}_{i}^{ < \varepsilon p}|)}= \Esp{M_{\tau_{\varepsilon p}}}$.
Hence, by \eqref{eq:stopping},
$$ p^{-3} \Esp{\sum_{i \geq 1} f( | \mathsf{C}_{i}^{ < \varepsilon p}|)} = \frac{f(p)}{p^{3}} \cdot  \mathbb{P}^{(p)}_{\infty} (\tau_{\varepsilon p} < \infty).$$
Since $f(p) \sim c p^{3}$ as $p \rightarrow  \infty$ for a certain constant $c>0$, it is enough to show that $\mathbb{P}^{(p)}_{\infty} (\tau_{\varepsilon p} < \infty)$ goes to $0$ as $ \varepsilon \to 0$, uniformly in $p \geq 1$.  To see this we consider the branching peeling by layers frozen below level $ \varepsilon p$ on the UIPT of type I, and denote by $(Z_{k}^{(p)})_{k \geq 0}$ the Markov chain that evolves like the perimeter of the cycle disconnecting the boundary of the external face of the UIPT of type I from infinity in this peeling (see \cite[Section~3.1]{CLGpeeling} for the associated one-step peeling transitions). Then clearly $\mathbb{P}^{(p)}_{\infty} (\tau_{\varepsilon p} < \infty) \leq \mathbb{P}( \exists k \geq 1 ; Z_{k}^{(p)} \leq \varepsilon p)$.  On the other hand, by \cite[Section~3.2 and 3.3]{CLGpeeling}, the chain $ Z^{(p)}$ evolves as a certain random walk started from $p$ and conditioned to stay positive forever and an adaptation of \cite{CLGpeeling} to the type I setting (see in particular the second display in Section~3.3 in \cite{CLGpeeling} and \cite[Section 6]{CLGpeeling} for this adaptation) yields the existence of a constant $c>0$ such that $$ \forall \varepsilon>0, \quad  \forall p \geq 1/ \varepsilon, \qquad  \mathbb{P}^{(p)}_{\infty}( \exists k \geq 1 : Z_{k}^{(p)} \leq \varepsilon p )  \leq \frac{h( \varepsilon p )}{h(p)} \leq c \sqrt{\varepsilon}$$
with $h(p)=12^{-p}C(p)$, where we recall that $C(p)$ is defined in \eqref{equivalentcp}.  {Alternatively, as suggested by a referee, this can also be deduced from directly from the lack of cut-points in the Brownian plane and convergence of type I triangulations to the Brownian plane \cite{Bud16}.} The conclusion readily follows.
\end{proof}

\subsection{Mass of the lost cycles}
\label{sec:L3}

We introduce a natural genealogical order on cycles of a triangulation: If $ \mathbf{t}$ is a triangulation of the $p$-gon and if $ \mathscr{C}$ and $ \mathscr{C}'$ are two simple loops of $ \mathbf{t}$, we say that $ \mathscr{C}$ is an ancestor of $ \mathscr{C}'$, and write $ \mathscr{C} \preceq \mathscr{C}'$, if { $ \mathscr{C}' \subset   \mathscr{C}  \cup \mathbf{t}^{\mathscr{C}}$, where $\mathbf{t}^\mathscr{C}$ is  the component of $ \t \backslash \mathscr{C}$ which does not contain the external face of $ \mathbf{t}$}. Clearly, this partial order restricted to the cycles at heights of $ \mathbf{t}$ gives rise to a tree structure (see Section \ref{bigsec:proofthm1} for details concerning this genealogical structure). 
 For $r \geq 0$, we say that a simple path of $\mathbf{t}$ is a \emph{cycle at height $r$} if it is one of the cycles of $B_{r}(\mathbf{t})$. Fix $c>0$ and imagine a branching peeling exploration of $\mathbf{t}$ frozen below level $c$. Recall that $\mathsf{C}_{1}^{ <c},\mathsf{C}_{2}^{ <c},\ldots$ are the  cycles of $\mathsf{Cut}( \mathbf{t}, c)$. We denote by $(\ell^{ <c}_{i}(r))_{i \geq 1}$ the (possibly empty) sequence of perimeters of the cycles at height $r$ of $\mathbf{t}$ that are a descendant of (or that are equal to) one of the cycles  $\mathsf{C}_{1}^{ <c},\mathsf{C}_{2}^{ <c},\ldots$. Such cycles are called the \emph{lost cycles} at cutoff level $c$ and height $r$. 

In the case of Boltzmann triangulations of the $p$-gon, we  show that the mass (in the $\ell^{3}$ sense) of the lost cycles at cutoff level $\varepsilon p$ is negligible as $ \varepsilon \to 0$, uniformly in $p$:

\begin{proposition}  \label{prop:l3lostcycles}For every $ \delta>0$, we have
$$ \sup_{p \geq 1}\Prp{\sup_{r \geq 0} p^{-3} \sum_{i \geq 1}\left( \ell^{< \varepsilon p}_{i}(r)\right)^3>\delta}  \quad \mathop{\longrightarrow}_{\varepsilon \rightarrow 0} \quad 0.$$
\end{proposition}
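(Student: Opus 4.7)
The plan is to construct a deterministic branching peeling algorithm whose cycle martingale dominates, uniformly in $r$, the total $f$-value of the lost cycles at height $r$, and then to conclude by combining Doob's maximal inequality with Lemma~\ref{lem:epsexplo}.

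First I would define an algorithm $\mathcal{A}^*$ which coincides with $\mathcal{A}^{<\varepsilon p}$ for $n<\tau_{\varepsilon p}$, and for $n\geq \tau_{\varepsilon p}$ continues with a peeling-by-layers of each frozen region, synchronized across all frozen cycles using the height in $T^{(p)}$: at each subsequent step $\mathcal{A}^*$ selects an edge on the cycle of $\mathcal{H}^*_n$ whose minimum height in $T^{(p)}$ is smallest, choosing the edge itself by the rule described in Section~\ref{sec:branchedpeelinglayers}. Then $\mathcal{A}^*$ is a valid deterministic branching peeling algorithm, and Proposition~\ref{prop:martingale} yields that its cycle martingale $(M^*_n)_{n\geq 0}$ is a nonnegative $(\mathcal{F}_n)$-martingale under $\mathbb{P}^{(p)}$.

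Next, for every $r\geq 0$ I would introduce the stopping time
$$\theta^*_r := \inf\big\{n\geq \tau_{\varepsilon p}:\ \text{every cycle of }\mathcal{H}^*_n\text{ has all its vertices at height }\geq r\text{ in }T^{(p)}\big\},$$
which is finite under $\mathbb{P}^{(p)}$ since $T^{(p)}$ is almost surely finite. By an argument analogous to~\eqref{eq:stoppingtimeboule} carried out inside each frozen region, the cycles of $\mathcal{H}^*_{\theta^*_r}$ are, for $r$ large enough, precisely the cycles of $B_r(T^{(p)})$ lying in the frozen regions, that is the lost cycles at height $r$; in all cases one has the pointwise bound
$$\sum_{j\geq 1} f\big(\ell^{<\varepsilon p}_j(r)\big) \leq M^*_{\theta^*_r} \leq \sup_{n \geq \tau_{\varepsilon p}} M^*_n.$$
Since $(M^*_{\tau_{\varepsilon p}+k})_{k\geq 0}$ is a nonnegative martingale with starting value $M^*_{\tau_{\varepsilon p}}=\sum_i f(|\mathsf{C}^{<\varepsilon p}_i|)$, Doob's $L^1$-maximal inequality gives
$$\mathbb{P}^{(p)}\!\Big(\sup_{n\geq \tau_{\varepsilon p}} M^*_n \geq \lambda \,\Big|\,\mathcal{F}_{\tau_{\varepsilon p}}\Big) \leq \frac{1}{\lambda}\sum_{i\geq 1} f\big(|\mathsf{C}^{<\varepsilon p}_i|\big),$$
and taking expectations together with Lemma~\ref{lem:epsexplo} yields $\mathbb{P}^{(p)}(\sup_{n\geq \tau_{\varepsilon p}} M^*_n\geq \lambda)\leq \gamma(\varepsilon)\,p^3/\lambda$, where $\gamma(\varepsilon):=\sup_{p\geq 1} p^{-3}\,\mathbb{E}^{(p)}[\sum_i f(|\mathsf{C}^{<\varepsilon p}_i|)] \to 0$ as $\varepsilon\to 0$.

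Finally, from the explicit formula~\eqref{def:f} one sees that $f(\ell)\geq c\,\ell^3$ for some absolute constant $c>0$ and every integer $\ell\geq 1$; since all lost cycles have perimeter at least $1$, the choice $\lambda=c\delta p^3$ yields
$$\sup_{p\geq 1}\mathbb{P}^{(p)}\!\Big(\sup_{r\geq 0} p^{-3}\sum_{j\geq 1}\big(\ell^{<\varepsilon p}_j(r)\big)^3 > \delta\Big) \leq \frac{\gamma(\varepsilon)}{c\delta}\xrightarrow[\varepsilon\to 0]{}0,$$
which is exactly the desired conclusion (uniformity in $p$ being automatic since the right-hand side is $p$-free). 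The main obstacle I anticipate is the careful design of $\mathcal{A}^*$ together with the verification that $\mathcal{H}^*_{\theta^*_r}$ indeed captures all the lost cycles at height $r$ (with the convention on edges between vertices of equal height from Section~\ref{sec:def}); once this geometric synchronization is in place, the martingale machinery is standard.
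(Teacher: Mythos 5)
Your proof takes essentially the same route as the paper: exhibit a cycle martingale (conditionally on $\mathsf{Cut}(T^{(p)},\varepsilon p)$) that dominates the $f$-values of the lost cycles, apply Doob's $L^1$-maximal inequality conditionally, take expectations, invoke Lemma~\ref{lem:epsexplo}, and finish with $f(\ell)\geq c\,\ell^3$. The only presentational difference is that you glue the post-cutoff layer explorations of all frozen regions into one global deterministic algorithm $\mathcal{A}^*$ and use its full discrete-time cycle martingale $M^*_n$ with a supremum over $n\geq\tau_{\varepsilon p}$, whereas the paper instead works directly with a height-indexed process $\mathscr{M}^{<\varepsilon p}(r)$, obtained by summing over each frozen region the cycle martingale of the peeling by layers inside that region evaluated along the height stopping times $\theta_r$; this decomposition relies on the conditional independence of the filled-in regions from Proposition~\ref{prop:peelinggenerallaw} and avoids having to argue that a single synchronised global algorithm is well-defined.

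One point to be a little more careful about in your write-up: $\mathcal{A}^*$, as stated, chooses the edge by ``minimum height in $T^{(p)}$,'' which must be shown to be a measurable function of $\mathcal{H}^*_n$ alone for $\mathcal{A}^*$ to qualify as a deterministic peeling algorithm (so that Proposition~\ref{prop:martingale} applies). This amounts to checking that, for $n\geq\tau_{\varepsilon p}$, the $T^{(p)}$-heights of all cycle vertices of $\mathcal{H}^*_n$ are determined by $\mathcal{H}^*_n$. This is the same geometric synchronisation you already identify as the main obstacle, so you are aware of the issue; both proofs lean on the property-$(P)$ structure of the peeling by layers (vertices of the frozen cycle at two consecutive heights, all strictly interior vertices of the frozen region at heights at least as large) to justify it, and the paper's region-by-region formulation sidesteps some of the bookkeeping by reducing to the already established $B_r(\mathbf{t})=\mathcal{H}_{\theta_r}$ identity inside each hole.
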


\begin{proof} 
 For $i \geq 1$, let $$ \mathcal{D}_{i}^{<\varepsilon p}(r)= \{ \mathcal{C} :   \mathcal{C}  \textrm{ is a cycle at height $r$ of } \mathbf{t} \textrm{ and is a descendant of  (or is equal to) } \mathsf{C}_{i}^{ < \varepsilon p} \}$$  
be the (possibly empty) collection of descendants of $\mathsf{C}_{i}^{ < \varepsilon p}$ which are cycles at height $r$. In order to explore the lost cycles, we introduce the following process $( \mathscr{M}^{< \varepsilon p}(r) )_{r \geq 0}$:
$$ \mathscr{M}^{< \varepsilon p}(r) = \sum_{i \geq 1} \left\{ \begin{array}{cl} f(| \mathsf{C}_{i}^{ < \varepsilon p}|) & \mbox{ if }  \mathcal{D}_{i}^{< \varepsilon p}(r) = \varnothing,\\
 \displaystyle \sum_{ \mathcal{C} \in  \mathcal{D}_{i}^{<\varepsilon p}(r) }  f(\left|  \mathcal{C} \right|)& \mbox{ otherwise}. \end{array}\right.$$

Under $\mathbb{P}^{(p)}$ and conditionally given $  \mathsf{Cut}(\mathbf{t}, \varepsilon p)$, by Proposition \ref{prop:martingale} and Corollary \ref{cor:stopping}, the process $ \mathscr{M}^{< \varepsilon p}$ is a nonnegative martingale starting from 
 $$ \mathscr{M}^{ < \varepsilon p} (0) = \sum_{i \geq 1} f ( | \mathsf{C}_{i}^{ < \varepsilon p}|).$$
Since  $\sum_{i \geq 1}f\left( \ell^{< \varepsilon p}_{i}(r)\right) \leq \mathscr{M}^{<  \varepsilon p}(r) $ for every $r \geq 0$ and since $f(p) \sim c p^{3}$ as $p \rightarrow  \infty$ for a certain constant $c>0$, it is enough to show that 
$$ \sup_{p \geq 1}\Prp{\sup_{r \geq 0} p^{-3} \mathscr{M}^{<  \varepsilon p}(r)>\delta}  \quad \mathop{\longrightarrow}_{\varepsilon \rightarrow 0} \quad 0.$$
But by Doob's maximal inequality,  we have 
$$ \mathbb{P}^{(p)}\big( \sup_{r \geq 0} \mathscr{M}^{<  \varepsilon p}(r) \geq \delta p^{3} \mid  \mathsf{Cut}(\mathbf{t}, \varepsilon p) \big)  \leq \frac{1}{ \delta p^3} \cdot \sum_{i \geq 1} f( |\mathsf{C}_{i}^{ < \varepsilon p}|).$$
Hence, by taking the expectation under $\mathbb{E}^{(p)}$, we get that
$$\Prp{\sup_{r \geq 0} p^{-3}\mathscr{M}^{<  \varepsilon p}(r)>\delta} \leq  \frac{1}{ \delta p^3} \cdot \Esp{\sum_{i \geq 1} f( |\mathsf{C}_{i}^{ < \varepsilon p}|)}.$$ 
The desired result then follows from Lemma \ref{lem:epsexplo}.
\end{proof}

\subsection{Volumes estimates}
We now provide an estimate under $\mathbb{P}^{(p)}$ on the volumes of the triangulations that fill-in the holes of  $\mathsf{Cut}(  \mathbf{t}, \varepsilon p)$.  We mention that this estimate is not required for the proof of Theorem \ref{thm:main} but will be used in the proof of Theorem \ref{thm:no tentacles}. Its proof is similar to that of Proposition \ref{prop:l3lostcycles} and also relies on Lemma \ref{LEM:HAUTEPS}. 

We denote by $  \mathrm{T}_{1}^{< \varepsilon p}, \mathrm{T}_{2}^{ < \varepsilon p}, \ldots$ the triangulations with boundaries that fill-in the holes of $ \mathsf{Cut}( \mathbf{t}, \varepsilon p)$, and, as before, we let $ \mathsf{C}_{1}^{< \varepsilon p}, \mathsf{C}_{2}^{< \varepsilon p}, \ldots$ be their boundaries.  Recall from Section~\ref{sec:martingales} that the volume of a Boltzmann triangulation of the $p$-gon is of order $p^2$ (see \cite[Proposition 8 and Section~6]{CLGpeeling} for a more precise result and convergence in distribution of $|\mathbf{t}|/p^2$ under $\mathbb{P}^{(p)}$ as $p \rightarrow \infty$). We  show that the maximal volume of a triangulation with a boundary that fills-in a hole of  of  $\mathsf{Cut}(  \mathbf{t}, \varepsilon p)$ under $\mathbb{P}^{(p)}$ is small compared to $p^2$:

\begin{proposition} \label{prop:volumesmall}For every $\delta>0$, $$  \sup_{p \geq 1} \Prp{p^{-2}  \sup_{i \geq 1} | \mathrm{T}_{i}^{< \varepsilon p}| > \delta}  \quad \mathop{\longrightarrow}_{\varepsilon \rightarrow 0} \quad 0.$$
\end{proposition}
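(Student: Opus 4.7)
The approach mirrors that of Proposition~\ref{prop:l3lostcycles}: condition on $\mathsf{Cut}(\mathbf{t}, \varepsilon p)$, use the conditional independence of the triangulations filling the holes, and then exploit the cycle martingale via Lemma~\ref{lem:epsexplo}. The key extra ingredient is a uniform volume-tail estimate for Boltzmann triangulations with a boundary.

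The first step is to establish the following uniform bound: there exists a constant $K>0$ such that
\begin{equation}\label{eq:uniftail}
\mathbb{P}^{(q)}(|\mathbf{t}| \geq N) \;\leq\; K\,\frac{f(q)}{N^{3/2}}, \qquad q\geq 1,\ N\geq 1.
\end{equation}
For large $N$, \eqref{eq:uniftail} follows from the asymptotic $\#\mathcal{T}_{n,q}\sim C(q)\,(12\sqrt3)^n n^{-5/2}$ in \eqref{eq:tnpexact} together with $f(q)=C(q)/Z(q)$; uniformity in $q$ comes from the fact that the ratio in \eqref{eq:tnpexact} is controlled by its asymptotic shape (one can verify this by a direct Stirling estimate of the explicit formula for $\#\mathcal{T}_{n,q}$). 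For small $N$, the bound is trivial because $f(q)\asymp q^3 \gtrsim 1$ for $q\geq 1$.

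The second step is the conditional independence. By Corollary~\ref{cor:peelinggenerallaw} applied at the a.s.\ finite stopping time $\tau_{\varepsilon p}$ (noting $\mathsf{Cut}(\mathbf{t}, \varepsilon p) = \mathcal{H}^{<\varepsilon p}_{\tau_{\varepsilon p}}$), under $\mathbb{P}^{(p)}$ and conditionally on $\mathsf{Cut}(\mathbf{t},\varepsilon p)$, the triangulations $\mathrm{T}_1^{<\varepsilon p}, \mathrm{T}_2^{<\varepsilon p},\ldots$ are independent, and $\mathrm{T}_i^{<\varepsilon p}$ is a Boltzmann triangulation of the $|\mathsf{C}_i^{<\varepsilon p}|$-gon.

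The third step combines these ingredients. A union bound together with \eqref{eq:uniftail} applied with $N=\delta p^2$ gives
\begin{equation*}
\mathbb{P}^{(p)}\Bigl(\sup_{i\geq 1} |\mathrm{T}_i^{<\varepsilon p}| > \delta p^2 \,\Bigm|\, \mathsf{Cut}(\mathbf{t},\varepsilon p)\Bigr)
\;\leq\; \sum_{i\geq 1} \mathbb{P}^{(|\mathsf{C}_i^{<\varepsilon p}|)}\bigl(|\mathbf{t}| > \delta p^2\bigr)
\;\leq\; \frac{K}{\delta^{3/2}\,p^{3}}\sum_{i\geq 1} f\bigl(|\mathsf{C}_i^{<\varepsilon p}|\bigr).
\end{equation*}
Taking expectations under $\mathbb{E}^{(p)}$ yields
\begin{equation*}
\sup_{p\geq 1}\mathbb{P}^{(p)}\Bigl(p^{-2}\sup_{i\geq 1} |\mathrm{T}_i^{<\varepsilon p}| > \delta\Bigr) \;\leq\; \frac{K}{\delta^{3/2}}\sup_{p\geq 1} p^{-3}\,\mathbb{E}^{(p)}\!\Bigl[\sum_{i\geq 1} f\bigl(|\mathsf{C}_i^{<\varepsilon p}|\bigr)\Bigr],
\end{equation*}
and the right-hand side tends to $0$ as $\varepsilon\to 0$ by Lemma~\ref{lem:epsexplo}, proving the claim.

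The main obstacle is carefully justifying the uniform tail bound \eqref{eq:uniftail}; once it is in hand, the rest is a routine union bound and an application of the already established Lemma~\ref{lem:epsexplo}. An alternative (and perhaps more conceptual) route to \eqref{eq:uniftail} is to use the spine/Radon--Nikodym identity \eqref{eq:radonnikodym}: for a peeling algorithm that explores the entire triangulation, the event $\{|\mathbf{t}|\geq N\}$ is eventually detected by the peeling, and the identity $\mathbb{P}^{(q)}_\infty(\tau<\infty)=\mathbb{E}^{(q)}[M_\tau]/f(q)$ together with UIPT perimeter estimates of \cite{CLGpeeling} yields a bound of the same form.
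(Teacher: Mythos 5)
Your proof is correct and follows essentially the same route as the paper: condition on $\mathsf{Cut}(\mathbf{t},\varepsilon p)$, apply a union bound together with the uniform tail estimate $\mathbb{P}^{(q)}(|\mathbf{t}|\geq x) \leq c\, f(q)\, x^{-3/2}$, and conclude via Lemma~\ref{lem:epsexplo}. The paper also derives this tail bound directly from the explicit enumeration formula (a uniform bound $\#\mathcal{T}_{n,q}\leq c\,C(q)\,n^{-5/2}(12\sqrt{3})^n$), just as you propose.
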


\begin{proof} By Proposition \ref{prop:peelinggenerallaw}, under $\mathbb{P}^{(p)}$ and conditionally given  $ \mathsf{Cut}(\mathbf{t}, \varepsilon p)$, the triangulations with boundaries $ \mathrm{T}^{< \varepsilon p}_{1},  \mathrm{T}^{< \varepsilon p}_{2}, \ldots$ are independent, and the law of $\mathrm{T}^{< \varepsilon p}_{i}$ is $\mathbb{P}^{(|\mathsf{C}_{i}^{< \varepsilon p}|)}$ for every $i$. A union bound therefore yields
$$ \mathbb{P}^{(p)}( \exists i \geq 1;  | \mathrm{T}_{i}^{< \varepsilon p}| \geq \delta p^2) \leq \mathbb{E}^{(p)}\left[  \sum_{i \geq 0} \mathbb{P}^{(| \mathsf{C}^{ < \varepsilon p}_{i}|)}( | \mathbf{t}| \geq \delta p^2 )\right]. $$
Using the explicit formulas for $ \# \mathcal{T}_{n,p}$ it is a simple matter to see that $\# \mathcal{T}_{n,p} \leq c \cdot C(p) n^{-5/2} (27/2)^n$ for some constant $c >0$ independent of $p$ and $n$ (see e.g.~\cite{CLGmodif} for similar estimates) and where we recall that $C(p)$ is given by \eqref{equivalentcp}. By definition of the Boltzmann distribution, by using \eqref{equivalentcp} and \eqref{eq:asympZp}, there exists a constant $c >0$, independent of $p$ and $n$, such that $ \mathbb{P}^{(p)} (  | \mathbf{t}| \geq x) \leq c f(p) x^{-3/2}$
for every $p \geq 1$ and $x \geq 1$. Therefore
$$ \mathbb{P}^{(p)}( \exists i \geq 1 ; | \mathrm{T}_{i}^{< \varepsilon p}| \geq \delta p^2) \leq   c\delta^{-3/2} \cdot \frac{1}{ p^3}\mathbb{E}^{(p)}\left[  \sum_{i \geq 0} f(| \mathsf{C}_{i}^{< \varepsilon p}|)\right]. $$
An appeal to Lemma \ref{lem:epsexplo} then completes the proof.
\end{proof}

\section{Proof of Theorem \ref{thm:main}}

\label{bigsec:proofthm1}

Throughout this section, we implicitly work under $\P^{(p)}$ for some fixed $p\geq 1$ and we explore the triangulation of the $p$-gon using the branching peeling by layers algorithm which has been described in Section~\ref{sec:branchedpeelinglayers}. We consider the family of cycles which appear in this peeling exploration, together with the boundary of the $p$-gon, and we recall
that this family is endowed with the natural (partial) order $ \preceq$ induced by their genealogy. More precisely, this yields 
a rooted  tree structure ${\mathbb C}$ which is  binary incomplete, in the sense that each vertex of ${\mathbb C}$ (i.e. each cycle) has out-degree $0, 1$ or $2$, and is planar, so that when a vertex has two children, the largest child is placed upper-left and the smaller upper-right. It will be convenient to agree that when a vertex has a single child, this child is also placed upper-left, so that all edges of ${\mathbb C}$ are either (upper-) right or left edges. The boundary of the $p$-gon is viewed as the root of ${\mathbb C}$, and when the peeling algorithm explores a new triangle with base lying in  some cycle ${\mathscr C}$, the outcome is either ($ \mathsf{V}$) or ($ \mathsf{G}$) or ($ \mathsf{C}$), where
\begin{enumerate}
\item[($ \mathsf{V}$):]
this triangle is degenerate with two vertices and a single (oriented) edge and then ${\mathscr C}$ is  a leaf of ${\mathbb C}$, 
 \item[($ \mathsf{G}$):]
this exploration splits ${\mathscr C}$ into two new cycles ${\mathscr C}_1$ and ${\mathscr C}_2$ with $|{\mathscr C}_1|\geq |{\mathscr C}_2|$ and $|{\mathscr C}_1|+|{\mathscr C}_2|=|{\mathscr C}|+1$, and then we view 
${\mathscr C}_1$ as the left child of ${\mathscr C}$ and ${\mathscr C}_2$ as the right child, 
\item[($ \mathsf{C}$):]
the third extremity of this triangle does not lie on ${\mathscr C}$ and the exploration thus produces a larger cycle ${\mathscr C}_1$ with $|{\mathscr C}_1|=|{\mathscr C}|+1$ which is then connected to ${\mathscr C}$ in ${\mathbb C}$  by an upper-left edge. 
\end{enumerate}

The purpose of this section is to prove Theorem \ref{thm:main}, we shall proceed as follows. We shall first provide some background on discrete and continuous cell-systems, a notion that was briefly alluded to in the Introduction and plays a key role in our approach. We shall then focus on maximal segments in ${\mathbb C}$ formed by vertices connected only by upper-left edges, which we call left-twigs. Roughly speaking, we view left-twigs as cells, that grow, divide and finally die out as time passes, forming a discrete cell system. We shall obtain a first limit theorem in distribution for a rescaled version of this cell-system, and then derive a second one after a time-substitution similar to \eqref{eq:stoppingtimeboule} and Proposition \ref{prop:hauteur} in the branching peeling by layers algorithm. Finally, we shall show how Theorem \ref{thm:main} follows from the preceding results and the bounds for the mass of the lost cycles in the cutoff procedure (cf. Section~\ref{sec:L3}).

\subsection{Cell systems and a self-similar growth-fragmentation process}
\label{sec:cell-systems}

We start by adapting the definition of a cell-system from \cite{BeGF} to the integer-valued case, tailored for the purpose of this work. First, we call {cell chain} a Markov chain in continuous time  ${\rm C}=({\rm C}(t), t\geq 0)$ taking values in $\Z_+=\{0,1, \ldots\}$, which is right-continuous in the sense that $\Delta{\rm C}(t):={\rm C}(t)-{\rm C}(t-)\leq 1$ for all $t\geq 0$, and is eventually absorbed at $0$, i.e.~if $\zeta=\inf\{t\geq 0: {\rm C}(t)=0\}$, then $\zeta<\infty$ a.s. and ${\rm C}(t)=0$ for all $t>\zeta$. We should think of ${\rm C}$ as the process of the size of a typical cell.

 We next associate to a cell chain a {\it discrete cell system}  whose dynamics can be described as follows.
  We start at time $t=0$ from a single cell, whose size varies as time passes according to ${\rm C}$. We  interpret each negative jump of ${\rm C}$ occurring before absorption as a splitting  event, in the sense that whenever $t<\zeta$ and 
$\Delta {\rm C}(t)\coloneqq {\rm C}(t)-{\rm C}(t-)=-y<0$, the cell divides at time $t$ into the mother cell and its daughter. After the splitting event, the mother cell has size ${\rm C}(t)$ and the daughter cell has size $y+1$ (so the sum of the sizes of the mother and the daughter after the division event equals the size of the mother before the birth plus $1$). 
Assume that the evolution of the daughter cell is governed by the law of the same Markov chain (starting of course from $y+1$), and is independent of the processes of all the other daughter particles. And so on for the granddaughters, then great-granddaughters ... We stress that the final jump of a cell at the time when it gets absorbed at $0$ is never viewed as a splitting event. 

In order to encode mathematically the cell system,
 it is convenient to label cells by the nodes of the Ulam tree $\U=\bigcup_{n=0}^{\infty} \N^n$,
with the usual convention that $\N^0=\{\varnothing\}$. So  ${\mathcal C}_{\varnothing}$ is the process of the size of the ancestor cell, which is born at time $b_{\varnothing}=0$ and evolves according to the dynamics of the Markov chain ${\rm C}$. For every $u\in\U$ and $j\in\N$, the cell labelled by $uj$ is born at time $b_{uj}\coloneqq b_u+\beta_{uj}$, where $\beta_{uj}$ denotes  the instant of  $j$-th largest jump of the process $-{\mathcal C}_u$, and  for every $s\geq 0$, ${\mathcal C}_{uj}(s)$ represents the size of the cell $uj$ at age $s$, that is at  time $b_{uj}+s$. We implicitly agree that 
$b_{uj}=\infty$ and ${\mathcal C}_{uj}(s)\equiv 0$ when ${\mathcal C}_u$ has less than $j$ jumps.
We can then represent discrete  cell systems as a collection of processes indexed by the Ulam tree
$$ (({\mathcal C}_u, b_u), u\in\U).$$
We stress that this description is a bit redundant as the birth times $b_{u}$ for $u\in\U$ can be recovered from the processes ${\mathcal C}_v$, with $v\prec u$ denoting a generic (strict) ancestor of $u$. So by a slight abuse of terminology, we shall also call $({\mathcal C}_u, u\in\U)$ a cell system based on the cell chain ${\rm C}$.

The definition of a {\it continuous cell-system} $(({\mathcal C}_u, b_u), u\in\U)$ is essentially similar.  
The building block is a so-called cell process, that is now a Feller process ${\rm C}=({\rm C}(t), t\geq 0)$ with values in $[0,\infty)$, which is assumed to have only negative jumps and to be absorbed continuously at $0$. When the process of the size of a cell has a negative jump, say $\Delta{\rm C}(t):={\rm C}(t)-{\rm C}(t-)=-y<0$, the size of the daughter cell which is born at time $t$ is $y$ (whereas it was $y+1$ for discrete cell processes).  We refer to \cite{BeGF} for details.

We now turn our attention to a specific cell system which has a central role in this work. 
Recall that $X$ denotes the self-similar Markov process defined by \eqref{eq:defY}; we see $X$ as a  cell process and write ${\mathcal X}=({\mathcal X}_u: u\in\U)$
for the (continuous) cell system which stems from $X$. Recall further that the Laplace exponent $\Psi$ of $X$
is given by  \eqref{eq:defpsi}, and consider the function
\begin{eqnarray}
\kappa(q) &\coloneqq& \Psi(q) + \int_{1/2}^1 (1-x)^q(x(1-x))^{-5/2}\d x\nonumber \\
&=&-\frac{8}{3}q + \int_{1/2}^1 (x^q-1+q(1-x)+(1-x)^q)(x(1-x))^{-5/2}\d x\nonumber \\
& =&  \quad  \frac{4 \sqrt{\pi}}{3} \frac{\Gamma(q- \frac{3}{2})}{\Gamma(q-3)},  \label{eq:mireille}
\end{eqnarray}
where $\Gamma$ is the gamma function. The last integration has been performed (formally) with a computer algebra software. In particular we see that $\kappa$ is a convex function with values in $(-\infty, \infty]$, with $\kappa(2)=\kappa(3)=0$. So 
$\{q>0: \kappa(q)\leq 0\} =[2,3]$ and the conditions of Theorem 2 of \cite{BeGF} are fulfilled.

For every $u\in\U$ and $j\in\N$, recall that  $\beta_{uj}$ denotes the instant of the $j$-th largest jump of $-X_u$, and define
$$b_u\coloneqq \sum_{v \preceq u}\beta_v,$$ 
where the notation $v \preceq u$ is meant for $v$ ancestor of $u$ (possibly $v=u$) in $\U$, and by convention $\beta_{\varnothing}=0$. So $b_u$ is the birth time of the cell labelled by $u$. 
Then for every $t\geq 0$, the family 
$${\bf X}(t)\coloneqq \{ {\mathcal X}_u(t- b_u):  b_u\leq t, u\in\U \}$$
of the sizes of the cells which are alive at time $t$ is $q$-summable for every $q\in[2,3]$, and in particular, ranking the elements of this set in the decreasing order, we can -- and henceforth will -- view ${\bf X}$ as a random process with values in $\ell^{\downarrow}_3$.

\subsection{Scaling limit for cycle lengths in a peeling exploration}
\label{sec:cyclen}

Next, call {\it left-twig} in ${\mathbb C}$ 
a maximal sequence of vertices connected only by upper-left edges. In other words, a left-twig is a segment $[{\mathscr C}, {\mathscr C}']$ in ${\mathbb C}$ 
 where the right extremity ${\mathscr C}'$ is a leaf of ${\mathbb C}$, the left extremity ${\mathscr C}$ is either  the root or is connected to its parent by a upper-right edge, and all edges between adjacent cycles  in the segment $[{\mathscr C}, {\mathscr C}']$ are left-edges.
 The left-twig starting from the root thus corresponds to the chain of the locally largest cycle (see Section~\ref{sec:llc}), the other left-twigs start from some cycle ${\mathscr C}_2$ that results from case ($ \mathsf{G}$) above  (i.e.~${\mathscr C}_2$  is the second = right child of its parent), and then follows at each step the locally largest cycle in the descent of ${\mathscr C}_2$. 
 
We further attach to each cycle ${\mathscr C}$ in ${\mathbb C}$  an independent exponential variable $e_{{\mathscr C}}$ which we can think of as the time needed for the exploration of its distinguished triangle. 
We view each left-twig as an individual $u$ endowed with  some life career. Specifically, if $({\mathscr C}_1, \ldots, {\mathscr C}_j)$ is a left-twig labelled by $u$, 
then the  lifetime $\zeta_u$ of that individual  is given $\zeta_u=\sum_{i=1}^j e_{{\mathscr C}_i}$ and the size of that individual at age $t$  by
$${{\widetilde L}}_u(t) = |{\mathscr C}_k|\qquad \hbox{whenever }\ \sum_{i=1}^{k-1} e_{{\mathscr C}_i}\leq t < \sum_{i=1}^{k}e_{{\mathscr C}_i} \leq \zeta$$
and ${{\widetilde L}}_u(t)=0$ for $t\geq \zeta$.  

In turn, the genealogical tree ${\mathbb C}$ of the cycles  induces a tree structure on the family of left-twigs. Specifically, we use Ulam's notation to label the left-twigs, also called individuals in the sequel, as follows. 
First, the individual corresponding to the left-twig which has the root of ${\mathbb C}$ as left-extremity, is viewed as the ancestor and hence labelled by $\varnothing$. The process of its size as time passes is denoted by ${{\widetilde L}}_{\varnothing}=({{\widetilde L}}_{\varnothing}(t), t\geq0)$. 

The children of $\varnothing$ form individuals at the first generation, they correspond {to} the left-twigs at distance $1$ from the left-twig $\varnothing$ in ${\mathbb C}$. More precisely, the ancestor $\varnothing$ begets children during its lifetime: each time $t>0$ at which ${{\widetilde L}}_{\varnothing}$ makes a {non positive} jump $\Delta {{\widetilde L}}_{\varnothing}(t)= {{\widetilde L}}_{\varnothing}(t)-{{\widetilde L}}_{\varnothing}(t-)\leq {0} $  corresponds to the birth of a child which has then initial size $1-\Delta {{\widetilde L}}_{\varnothing}(t)$. 
The children of $\varnothing$ are labelled $1,2, \ldots$ in the decreasing order of their sizes at birth (i.e.~the perimeter of the first cycle on that left-twig), and, say,  in increasing order of their birth time in case of ties. We agree that ${{\widetilde L}}_n\equiv 0$ when $\varnothing$ has less than $n$ children, and  iterate in an obvious way for the next generations. Finally, we obtain a labelling  of the left-twigs by $\mathbb{U}$ as well as a family of processes indexed by the Ulam tree
$$({{\widetilde L}}_u: u\in\U).$$

The next statement concerns convergence in distribution, in the sense of finite dimensional distributions,  for a sequence of  families of processes. Specifically, let $E$ be some countable set, and
consider for every $ e\in E$ a sequence of c\`adl\`ag real-valued processes $\eta^{(n)}_e=(\eta^{(n)}_e(t), t\geq 0)$, $n\in\N$.
We shall write
 $$(\eta^{(n)}_e:  e\in  E)  \quad \xrightarrow[n\to\infty]{(d)} \quad  (\eta_e:  e\in  E)$$ 
 provided that for every finite subset $F\subset  E$,
 the multivariate process $((\eta^{(n)}_e(t))_{ e\in F}: t\geq 0)$ converges in distribution in the sense of  Skorokhod towards $((\eta_e(t))_{ e\in F}: t\geq 0)$.
We shall use the notation $\xrightarrow[p\to\infty]{(d)}$ for weak convergence in the sense explained above, where the distribution on the left-hand side is implicitly considered under $\P^{(p)}$. Last, recall the definition of the self-similar Markov process $\widetilde X=(\widetilde X(t), t\geq 0)$ which has been introduced in Section~2.6. Further, if $Y=(Y(t): t\geq 0)$ is a stochastic process, we shall use the notation $Y(c\times \cdot)$ for the process rescaled in time by a factor $c$, that is $Y(c\times \cdot)=(Y(ct): t\geq 0)$.

\begin{lemma} \label{Lj1} Under $\P^{(p)}$, $({{\widetilde L}}_u: u\in\U)$ is a discrete cell-system; the  associated cell process is distributed as the continuous-time version (i.e. subordinated by an independent standard Poisson process) of the chain of the locally largest cycle ${\mathscr C}^*$.

Further, consider the self-similar cell process $(X'(t)= \widetilde X(2 \tone t ))_{ t\geq 0}$ and let 
$({\mathcal X}'_u, u\in\U)$ denote the corresponding continuous cell system.
Then there is the weak convergence of the rescaled systems
$$( p^{-1}{{\widetilde L}}_u(p^{3/2}\times \cdot): u\in\U) \quad \xrightarrow[p\to\infty]{(d)}  \quad  ({\mathcal X}'_u, u\in\U).$$
\end{lemma}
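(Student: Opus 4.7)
The plan splits naturally into the cell-system identification (first part) and the scaling limit (second part); the first part is largely bookkeeping of definitions, while the second part combines Proposition \ref{prop:scalingllcycle} with a generation-by-generation branching argument.

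\textbf{Cell-system structure.} Since the branching peeling by layers is deterministic and the exponential clocks $e_{\mathscr C}$ are i.i.d., the continuous-time evolution of the perimeter of the ancestor left-twig $\widetilde L_{\varnothing}$ is the discrete chain of the locally largest cycle $(\widetilde L^{(p)}(k))_{k\geq 0}$ from Section~\ref{sec:llc}, subordinated by an independent standard Poisson process. Each non-positive jump $\Delta \widetilde L_{\varnothing}(t)\leq 0$ corresponds to a peeling event of type $\mathsf{G}_k$ on the current locally largest cycle: the peeled cycle splits into two cycles whose perimeters sum to the previous perimeter plus $1$, the larger one remains on the left-twig, the other has initial size $1-\Delta \widetilde L_{\varnothing}(t)$ and opens a new left-twig. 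The terminal jump, corresponding to an event $\mathsf{V}$, brings the chain to $0$ and is not a birth, consistently with the cell-system convention. By Proposition \ref{prop:peelinggenerallaw} (or its Corollary \ref{cor:peelinggenerallaw} applied at the stopping time when the newborn cycle is revealed), the hole enclosed by this newborn cycle is filled by an independent Boltzmann triangulation of the appropriate perimeter $q$, so its life career and descent is distributed as an independent copy of the whole construction started from a $q$-gon. Iterating across generations yields the claimed discrete cell-system structure.

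\textbf{Scaling limit.} For the ancestor cell, Proposition \ref{prop:scalingllcycle} gives the weak convergence $p^{-1}\widetilde L^{(p)}(\lfloor p^{3/2}\cdot\rfloor)\to X'$ in $\mathbb{D}(\mathbb{R}_{+},\mathbb{R})$, where $X'(t)=\widetilde X(2\tone t)$. Since $\widetilde L_{\varnothing}$ is obtained from $\widetilde L^{(p)}$ by Poissonian subordination and the clock satisfies $p^{-3/2}N(p^{3/2}t)\to t$ uniformly on compacts by the law of large numbers, we get $p^{-1}\widetilde L_{\varnothing}(p^{3/2}\cdot)\to X'$ jointly. I would then argue by induction on the generation in the Ulam tree: assume the rescaled processes of all cells $u\in\U$ with $|u|\leq n$ converge jointly (in the finite-dimensional Skorokhod sense) to the corresponding cells of $({\mathcal X}'_u, u\in\U)$. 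For each such $u$, the birth times of its children, ordered by initial size, and the corresponding initial sizes, are measurable functions of the ordered family of non-positive jumps of $p^{-1}\widetilde L_u(p^{3/2}\cdot)$; by standard continuity of the jump functional in the $J_1$ topology at c\`adl\`ag limits with no fixed discontinuity times and pairwise distinct jump sizes, this birth data converges jointly to that of ${\mathcal X}'_u$. The Markov branching property established in the first part, together with the Feller character and the self-similarity of $X'$, then identifies the rescaled processes at generation $n+1$ as independent copies of $X'$ started from the limit birth sizes and time-shifted by the limit birth times, i.e.\ with the children of ${\mathcal X}'_u$.

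\textbf{Main obstacle.} The delicate step is the joint convergence at all generations simultaneously, which relies on the continuity of the map extracting the ordered sequence of non-positive jumps from a c\`adl\`ag trajectory. This map is not continuous on all of $\mathbb{D}$, but it is continuous at trajectories with no fixed discontinuity times and pairwise distinct jump sizes, and both properties hold almost surely for $\widetilde X$ since $\xi$ is a L\'evy process whose L\'evy measure is absolutely continuous (hence atomless). A secondary technical point is that, in order to capture birth events up to the absorption time of each cell, one needs to strengthen Proposition \ref{prop:scalingllcycle} to a convergence in $\mathbb{D}([0,\infty],\mathbb{R})$ (the time axis being compactified), as announced in the remark following that proposition; this can be achieved by establishing tightness of the rescaled absorption times $p^{-3/2}\widetilde\sigma^{(p)}$, which in turn follows either from \cite{BK14} or from a direct argument based on the cycle super-martingale $f(\widetilde L^{(p)})$ from Corollary \ref{cor:stopping}.
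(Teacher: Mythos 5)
Your approach is essentially the same as the paper's: the cell-system identification via Corollary \ref{cor:peelinggenerallaw}, Proposition \ref{prop:scalingllcycle} as the base, extension to $\mathbb{D}([0,\infty],\mathbb{R})$ via the super-martingale bound, and induction over generations in the Ulam tree. One useful addition on your end is the explicit treatment of the Poissonian subordination via the law of large numbers for the clock, which the paper glosses over.

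There is, however, a real gap in your handling of the \emph{joint} Skorokhod convergence across components. Recall that the target convergence $\xrightarrow{(d)}$ in the lemma means that for every finite $F\subset\U$, the \emph{multivariate} process $((\eta^{(n)}_e)_{e\in F})$ converges in Skorokhod's $J_1$ sense — i.e., with a \emph{single} time-change valid for all coordinates simultaneously. Your generation-by-generation argument (birth data is a continuous functional of the parent's path; children are then independent copies by the Markov branching property and Feller property) naturally yields convergence with \emph{separate} time-changes for each coordinate: once you condition on the parent and launch the children, nothing forces the Skorokhod approximations of parent and child to use the same reparametrization of time. The paper addresses precisely this by introducing the weaker notion $(d*)$ of convergence with coordinate-wise time-changes, establishing $(d*)$ by the induction you describe, and then upgrading $(d*)$ to genuine multivariate $J_1$ convergence via Proposition~2.2 on p.~338 of Jacod--Shiryaev, whose applicability here hinges on the a.s.\ absence of common jump times among the limit processes $\mathcal{X}'_u$. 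Your reference to ``continuity of the jump functional at paths with pairwise distinct jump sizes'' is in the right spirit but does not substitute for this upgrade step; as written, your argument only delivers coordinate-wise convergence, not the joint $J_1$ statement the lemma requires.
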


\begin{proof} We first fix $p\geq 1$ and work under $\P^{(p)}$. {It is  convenient using to consider a deterministic peeling algorithm ${\mathcal A}'$ which induces the same genealogical tree of cycles ${\mathbb C}$, and such that ${\mathcal A}'$  explores first completely the ancestral left-twig labelled by $\varnothing$, then the left-twigs of the first generation in their specified  order, and so on, generation by generation.} The exploration process of the ancestral left-twig is precisely described by the chain of the locally largest cycle discussed in Section~\ref{sec:llc}, and thus ${{\widetilde L}}_{\varnothing}=({{\widetilde L}}_{\varnothing}(t), t\geq 0)$ is the continuous time Markov chain obtained by subordinating the locally largest cycle chain $({{\widetilde L}}(k): k\geq 0)$ with an independent Poisson process with unit rate. 
If we stop the peeling algorithm ${\mathcal A}'$ once the ancestral left-twig $\varnothing$ has been completely searched, Corollary \ref{cor:peelinggenerallaw} yields that given ${{\widetilde L}}_{\varnothing}$, the processes at the first generation  ${{\widetilde L}}_1, {{\widetilde L}}_2, \ldots$ are independent, and more precisely ${{\widetilde L}}_i$ has the law of ${{\widetilde L}}_{\varnothing}$ under $\P^{(p_i)}$,
where $p_i-1\geq 1$ is the size of the $i$-th largest jump of $-{{\widetilde L}}_{\varnothing}$ whenever the latter has at least $i$ positive jumps, and $p_i=0$ otherwise. By iteration, we conclude that under  $\P^{(p)}$, $({{\widetilde L}}_u: u\in\U)$ is a discrete cell system induced by the cell chain ${{\widetilde L}}_{\varnothing}$.

It is convenient at this point to comment on a seemingly weaker notion of convergence for a sequence of a family of processes indexed by some countable set $E$.
For each $e\in E$, consider  a sequence of real-valued c\`adl\`ag processes $\eta^{(n)}_e=(\eta^{(n)}_e(t), t\geq 0)$, $n\in\N$.
We write
\begin{equation}\label{eq:quasicv}
(\eta^{(n)}_e: u\in  E) \quad  \xrightarrow[n\to\infty]{(d*)}  \quad  (\eta_e: u\in  E)
\end{equation}
 when for every finite subset $F\subset  E$ and finite time interval $[0,t]$,
 we can find for each $e\in F$ (random) strictly increasing continuous bijections $\sigma_e^{(n)}: [0,t]\to [0,t]$
 with 
 $$\lim_{n\to\infty}\sup_{0\leq s\leq t}|s-\sigma_e^{(n)}(s)| = 0 \qquad \hbox{in probability},$$ 
 such that for every family of bounded continuous functionals $(\Phi_e)_{e\in F}$ on the space of c\`adl\`ag functions on $[0,t]$ endowed with the supremum distance, 
 $$\lim_{n\to \infty} \E\left( \prod_{e\in F} \Phi_e(\eta^{(n)}_e\circ \sigma_e^{(n)})\right) =
 \E\left( \prod_{e\in F} \Phi_e(\eta_e)\right).$$
 We stress that this is {\it a priori} weaker than that of convergence in the sense of final dimensional distributions stated in Lemma \ref{Lj1}, because here we may use different time-changes $\sigma_u^{(n)}$ for different $e\in \U$, whereas we would need to use the same time change for all $u\in \U$  for the (joint) convergence in Skorokhod sense. However, we point out that if the processes $\eta_u$ have no common jump times a.s., then Proposition 2.2 on page 338 in \cite{JS03} shows that the convergence \eqref{eq:quasicv} can then be reinforced as 
$$(\eta^{(n)}_u: u\in \U) \quad  \xrightarrow[n\to\infty]{(d)} \quad  (\eta_u: u\in \U).$$

Next, recall from Proposition \ref{prop:scalingllcycle} that the rescaled chain $p^{-1}{{\widetilde L}}_{\varnothing}(p^{3/2}\times \cdot)$ converges in law  in the sense of Skorokhod for c\`adl\`ag processes, towards the self-similar process $X'$. Observe further that both $p^{-1}{{\widetilde L}}_{\varnothing}(p^{3/2}\times \cdot)$ and $X'$
attain the absorbing state $0$ at a finite time, and that  the convergence holds even when we include time infinity
(i.e. in the sense of c\`adl\`ag processes indexed by the compact time-interval $[0,\infty]$), because, just as in the proof of Proposition \ref{prop:hauteur}, for every $\varepsilon>0$, we can make the $\P^{(p)}$ probability that $p^{-1}{{\widetilde L}}_{\varnothing}(p^{3/2}\times \cdot)$ exceeds $c\varepsilon$ after entering $[0,\varepsilon]$ as small as we wish uniformly in $p$, by choosing $c>0$ sufficiently large.
Since convergence in Skorokhod sense for c\`adl\`ag processes indexed by $[0,\infty]$ implies the weak convergence (in the sense of finite dimensional distributions) of the sequence of the jump sizes ranked in the decreasing order, and since $X'$ has the Feller property (so its distribution depends continuously on its starting point), we now see that as $p\to \infty$, the law of the sequence of rescaled processes 
$(p^{-1}{{\widetilde L}}_{1}(p^{3/2}\times \cdot), p^{-1}{{\widetilde L}}_{2}(p^{3/2}\times \cdot), \ldots)$ under $\P^{(p)}$ converges weakly, in the sense $(d*)$  explained above, towards that of the first generation $({\mathcal X}'_1, {\mathcal X}'_2, \ldots)$ of a cell process induced by the self-similar cell process $X'$. More precisely, this also holds jointly with the weak convergence of $p^{-1}{{\widetilde L}}_{\varnothing}(p^{3/2}\times \cdot)$ towards ${\mathcal X}'_{\varnothing}$.
By iteration, we now see that there is the weak convergence of the rescaled systems
\begin{equation}\label{eq:preskcv}( p^{-1}{{\widetilde L}}_u(p^{3/2}\times \cdot): u\in\U) \quad  \xrightarrow[p\to\infty]{(d*)} \quad  ({\mathcal X}'_u, u\in\U).
\end{equation}
It is readily checked that the processes ${\mathcal X}'_u$ have no common jump times a.s., so, just as observed  above,  Proposition 2.2 on page 338 in \cite{JS03} shows that 
\eqref{eq:preskcv} entails our statement. 
\end{proof} 

\subsection{Scaling limit for cycle lengths in branching peeling by layers}

Recall  that the peeling algorithm we consider is of the type of
 branching peeling by layers which has been described in Section~\ref{sec:branchedpeelinglayers}. 
 If  $u \in \mathbb{U}$ labels a  left-twig $[{\mathscr C}, {\mathscr C}']$ and if $r \geq 0$,
 we set ${\mathscr C}_{u}={\mathscr C}$ and let  ${\mathcal L}_u(r)$ denote the length of the first cycle belonging to $[{\mathscr C},{\mathscr C}']$  which has all its vertices at distance at least $r $ from ${\mathscr C}_u$ (if any), where distances are measured in the triangulation ${\bf t}$. If there is no such cycle, then set ${\mathcal L}_u(r)=0$. 
  That is 
 \begin{equation}\label{eq:luc}
 {\mathcal L}_u(r) = {{\widetilde L}}_u(\theta_u(r)),
 \end{equation}
 where $\theta_u(r)$ is the first time when the cycle labelled by $u$ is at distance at least $r$ from the initial cycle ${\mathscr C}_u$ whenever there exists such a cycle, or $\theta_u(r)=\infty$ otherwise. We also set $h_{\varnothing}\equiv 0$. For every $u\in\U$ and $i\in\N$, 
we write $h_{ui}$ for the distance between ${\mathscr C}_{ui}$ and $ {\mathscr C}_{u}$ (again measured in the triangulation ${\bf t}$) whenever ${\mathscr C}_{ui}$ is non-empty, and 
$h_{ui}=\infty$ otherwise.

Next, recall from \eqref{eq:defY} the definition of the self-similar Markov process ${{X}}$, and, following Proposition \ref{prop:hauteur}, consider 
$$\bar X(t)= X  \left(  \frac{2 \tone}{\aone} \cdot t \right)  \,, \qquad t\geq 0.$$
We view $\bar X$ as a  cell process and write $\bar {\mathcal X}=(\bar {\mathcal X}_u: u\in\U)$
for the (continuous) cell system which stems from $\bar X$. We also write $\bar \beta_{uj}$ for  the instant of the $j$-th largest jump of $\bar {{\mathcal X}}_u$.
 The following claim is essentially a branching extension of Proposition \ref{prop:hauteur}.                     

\begin{corollary} \label{Cj1} There is the weak convergence
$$((p^{-1}{\mathcal L}_u(\sqrt p\times \cdot),p^{-1/2}h_u): u\in\U)
\quad  \xrightarrow[p\to\infty]{(d)} \quad 
((\bar {\mathcal X}_u,\bar \beta_u): u\in\U).$$ 
\end{corollary}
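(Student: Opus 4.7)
The strategy is to combine Lemma \ref{Lj1} with a Lamperti-type time substitution, extending to the entire Ulam tree the argument used to derive Proposition \ref{prop:hauteur} from Lemma \ref{LEM:HAUTEPS}.

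The first step is to upgrade Lemma \ref{LEM:HAUTEPS} from the ancestral left-twig to every twig. By Corollary \ref{cor:peelinggenerallaw}, conditionally on $(\widetilde L_v, {\mathscr C}_v: v \prec u)$, the peeling exploration along the twig labelled by $u$ is distributed as the ancestral twig in a Boltzmann triangulation of the $|{\mathscr C}_u|$-gon. Iterating Lemma \ref{LEM:HAUTEPS} generation by generation and combining it with the joint convergence already furnished by Lemma \ref{Lj1}, one obtains for every $\varepsilon > 0$
\begin{equation*}
\left(p^{-1/2}\, \eta^{(p)}_u\big([p^{3/2}\,\cdot] \wedge \widetilde\sigma^{(p)}_{u,\varepsilon}\big)\right)_{u \in \U} \quad \xrightarrow[p \to \infty]{(d)} \quad \left(\aone \int_0^{\cdot\, \wedge\, \widetilde\sigma_{u,\varepsilon}} \frac{\mathrm{d}s}{{\mathcal X}'_u(s)}\right)_{u \in \U},
\end{equation*}
jointly with the convergence of Lemma \ref{Lj1}, where $\eta^{(p)}_u$ denotes the height of the currently-peeled cycle along the $u$-th twig measured from ${\mathscr C}_u$, and $\widetilde\sigma^{(p)}_{u,\varepsilon}$ is the first instant at which the perimeter on this twig drops below $\varepsilon|{\mathscr C}_u|$.

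Next, just as in the deduction of Proposition \ref{prop:hauteur} from Lemma \ref{LEM:HAUTEPS}, I would invert this time change: the rescaled time-changes $p^{-3/2}\,\theta_u(\sqrt p\,\cdot \wedge \sigma_{u,\varepsilon}^{(p)})$ converge jointly to their Lamperti-type limits, and then combining with \eqref{eq:luc} and the identity \eqref{eq:tildextox} yields the joint weak convergence of $(p^{-1}{\mathcal L}_u(\sqrt p\,\cdot \wedge \sigma_{u,\varepsilon}^{(p)}))_{u\in\U}$ towards $(\bar{\mathcal X}_u(\cdot \wedge \sigma_{u,\varepsilon}))_{u\in\U}$. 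The cutoff at level $\varepsilon$ is then removed coordinate by coordinate by the super-martingale argument given at the end of the proof of Proposition \ref{prop:hauteur}: on the discrete side, $f({\mathcal L}_u)$ is a super-martingale by Corollary \ref{cor:stopping}, while on the continuous side $\kappa(3)=0$ gives $\Psi(3)\leq 0$, so $\exp(3\xi)$ is a super-martingale and hence so is $\bar{\mathcal X}_u^3$ by Lamperti's transformation; optional sampling then shows that the rescaled perimeters cannot return substantially above $\varepsilon$ after first entering $[0,\varepsilon]$, uniformly in $p$.

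Finally, the convergence $p^{-1/2}h_u \to \bar\beta_u$ follows for free once the preceding is established: $h_u$ reads as the value of the (convergent) height process of the parent twig at its (convergent) $j$-th largest jump time, and the limit cell processes ${\mathcal X}'_v$ have no fixed discontinuities. The main obstacle I anticipate is the joint passage to a genuine Skorokhod limit over the infinite Ulam tree and the simultaneous removal of the $\varepsilon$-cutoff on every branch; both are handled exactly as in the proof of Lemma \ref{Lj1}, first obtaining convergence in the weaker sense $\xrightarrow{(d*)}$ by the generation-by-generation induction, and then upgrading to genuine Skorokhod convergence via Proposition 2.2 on page 338 of \cite{JS03}, using that the limit processes $\bar{\mathcal X}_u$ have no common jump times almost surely.
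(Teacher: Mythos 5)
Your proposal is correct and follows essentially the same route as the paper: starting from the cell-system structure of Lemma~\ref{Lj1}, applying the arguments underlying Proposition~\ref{prop:hauteur} (Lemma~\ref{LEM:HAUTEPS}, Lamperti inversion, super-martingale cutoff removal via Corollary~\ref{cor:stopping} and $\kappa(3)=0$) along each twig via the Markov property, and then upgrading from $(d*)$ to genuine Skorokhod convergence using absence of common jump times and Proposition 2.2 in \cite{JS03}, after which the convergence of $p^{-1/2}h_u$ to $\bar\beta_u$ falls out as the jump instants of converging processes. You spell out in more detail what the paper compresses into the phrase ``It follows from Lemma~\ref{Lj1}, the Markov property and the arguments used to prove Proposition~\ref{prop:hauteur},'' but the mechanism is the same.
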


\begin{proof} Indeed, we know from Lemma \ref{Lj1} that under  $\P^{(p)}$, $({{\widetilde L}}_u: u\in\U)$ is a discrete cell system associated to the cell process ${{\widetilde L}}_{\varnothing}$. In this setting, Proposition \ref{prop:hauteur} can be rephrased as the weak convergence of the stopped cell processes
$$p^{-1}{\mathcal L}_{\varnothing}(\sqrt p\times \cdot) \quad  \xrightarrow[p\to\infty]{(d)}\quad 
\bar {\mathcal X}_{\varnothing}(\cdot).$$
It follows from Lemma \ref{Lj1}, the Markov property and the arguments used to prove Proposition  \ref{prop:hauteur} that more generally, there is the weak convergence in the sense of \eqref{eq:quasicv}
$$(p^{-1}{\mathcal L}_u(\sqrt p\times \cdot): u\in\U)  \quad \xrightarrow[p\to\infty]{(d*)} \quad 
(\bar {\mathcal X}_u: u\in\U).$$
The same argument using the absence of common jump times  as in the proof of from Lemma \ref{Lj1} enables us to replace the convergence in the sense $(d*)$ above by the stronger $(d)$. 
We can then complete the proof by considering the instant $p^{-1/2}h_{uj}$ (respectively, $\bar \beta_{uj}$) of the $j$-th largest jump of the process ${\mathcal L}_u(\sqrt p\times \cdot)$ (respectively, $\bar {\mathcal X}_u(\cdot)$). 
\end{proof} 

Next, we set for every $u\in \U$
$$H_u\coloneqq \sum_{v \preceq u}h_v,$$
where the notation $v \preceq u$ is meant for $v$ ancestor of $u$ in $\U$. Recall that the left-extremity of the left-twig labelled by $u$ is the cycle denoted by ${\mathscr C}_u$.
Then observe that  the distance measured in ${\mathbf t}$ between the $p$-gon and ${\mathscr C}_u$ (we implicitly agree that this distance is infinite whenever ${\mathscr C}_u$ is empty) can be expressed as $H_u+O(|u|)$, where the error term  $O(|u|)$
fulfills  $0\leq O(|u|)\leq |u|$. More precisely, these bounds follow from the fact our requirement on the peeling algorithm ensure that the distance (measured in ${\mathbf t}$) between any vertex of a cycle ${\mathscr C}_v$
and any vertex of the parent cycle ${\mathscr C}_{v-}$ is either $h_v$ or $h_v+1$. Hence, if we define 
$L_u(r)$ as the length of the cycle of the ball $B_r(T^{(p)})$ which is indexed by the left-twig $u$ (if any, and $L_u(r)=0$ otherwise), then there is the identity
$$L_u(r)= {\bf 1}_{\{r \leq H_u+O(|u|) \}}
{{\mathcal L}}_u(r-H_u-O(|u|)).$$

Set also 
$$b_u\coloneqq \sum_{v \preceq u}\beta'_v$$ 
for the birth-time of the cell $\bar {\mathcal{X}}_u$ and then 
$$\bar {X}_u(r)= {\bf 1}_{\{ b_u\leq  r \}}\bar{\mathcal{X}}_u(r- b_u).$$
In words, $\bar {X}_u(r)$ is the size of the cell labeled by $u$ at time $r$ (that is when its age is $r- b_u$) provided that it is already born at that time, and $0$ otherwise. We now immediately deduce from Corollary \ref{Cj1} the following.

\begin{corollary}\label{Cj2} There is the weak convergence
$$(p^{-1}{ L}_u(\sqrt p\times \cdot): u\in\U)
\quad  \xrightarrow[p\to\infty]{(d)}  \quad 
(\bar {{X}}_u(\cdot): u\in\U).$$
\end{corollary}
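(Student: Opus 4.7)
The plan is to derive this from Corollary~\ref{Cj1} by summing heights along ancestral chains and then applying Skorokhod-continuity of the time-shift-plus-indicator map. The key observation is the identity displayed just before the statement: up to the error $O(|u|)$, $L_u(\sqrt p\,\cdot)$ is exactly ${\mathcal L}_u(\sqrt p \, \cdot)$ shifted forward in time by the birth-height $H_u = \sum_{v \preceq u} h_v$ of the cycle ${\mathscr C}_u$ and then cut off by $\mathbf{1}_{\{\cdot \geq p^{-1/2}H_u + p^{-1/2}O(|u|)\}}$, so that the result is essentially an image of the pair $(p^{-1}{\mathcal L}_u(\sqrt p \, \cdot), p^{-1/2}H_u)$ under a continuous operation.

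First I would fix any finite $F \subset \U$ and enlarge it so that $F$ is stable under taking ancestors in $\U$. Since $H_u$ (respectively $b_u$) is a linear functional of $(h_v)_{v \preceq u}$ (respectively $(\bar\beta_v)_{v \preceq u}$), Corollary~\ref{Cj1} combined with the continuous mapping theorem delivers the joint convergence
$$\bigl( p^{-1}{\mathcal L}_u(\sqrt p \, \cdot),\ p^{-1/2}H_u \bigr)_{u \in F} \quad \xrightarrow[p\to\infty]{(d)} \quad \bigl( \bar{\mathcal X}_u,\ b_u \bigr)_{u \in F}.$$
The error $p^{-1/2}O(|u|) \leq |u|/\sqrt p$ tends to $0$ deterministically for each fixed $u \in F$, so it may be absorbed into $p^{-1/2}H_u$ without affecting the limit. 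By the Skorokhod representation theorem, I may assume that this joint convergence holds almost surely on a suitable probability space.

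Next I would invoke the shift-and-truncate map
$$\Phi : (f, t) \longmapsto \bigl[\, s \mapsto \mathbf{1}_{\{s \geq t\}} \, f(s - t)\, \bigr]$$
from $\mathbb{D}(\R_+,\R) \times \R_+$ to $\mathbb{D}(\R_+,\R)$. A straightforward check shows that $\Phi$ is continuous in the Skorokhod $J_1$ topology at any pair $(f,t)$ such that $f$ is c\`adl\`ag with no jump occurring at a deterministic position with respect to the shift; in our case the limiting cell process $\bar{\mathcal X}_u$ is driven by a L\'evy process whose jump times are atomless, so $b_u$ is a.s.~not a jump time of $\bar{\mathcal X}_u$ and $\Phi$ is almost surely continuous at $(\bar{\mathcal X}_u, b_u)$. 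Applying $\Phi$ to the convergence above, the defining relation $\bar X_u(\cdot) = \mathbf{1}_{\{\cdot \geq b_u\}} \bar{\mathcal X}_u(\cdot - b_u)$ yields $p^{-1}L_u(\sqrt p\,\cdot) \to \bar X_u$ for each fixed $u$.

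The only mild obstacle is to promote this componentwise convergence to \emph{joint} convergence in the Skorokhod $J_1$ topology over $u \in F$, since $\Phi$ uses a different (random) shift for each $u$. This is handled exactly as in the closing step of the proof of Lemma~\ref{Lj1}: the limit processes $(\bar X_u)_{u \in F}$ almost surely have no common jump times, because distinct cells are driven by independent L\'evy processes whose jump times form independent Poisson point processes on the half-line. Proposition~2.2 on page 338 of \cite{JS03} then upgrades the componentwise convergence to joint convergence in Skorokhod $J_1$, completing the proof.
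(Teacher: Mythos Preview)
Your argument is correct and follows the same route the paper intends: the paper merely says ``We now immediately deduce from Corollary~\ref{Cj1} the following,'' relying on the displayed identity relating $L_u$ to a time-shifted ${\mathcal L}_u$ and the parallel identity $\bar X_u(r)=\mathbf{1}_{\{b_u\leq r\}}\bar{\mathcal X}_u(r-b_u)$, and you have simply written out the continuous-mapping details that the paper leaves implicit.

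One small imprecision: in your justification of the continuity of $\Phi$, the condition ``$b_u$ is a.s.\ not a jump time of $\bar{\mathcal X}_u$'' is not quite the relevant point. The function $\Phi(\bar{\mathcal X}_u,b_u)$ \emph{always} has a jump at $b_u$ of size $\bar{\mathcal X}_u(0)>0$; the map $\Phi$ is nonetheless continuous at $(\bar{\mathcal X}_u,b_u)$ in the $J_1$ topology because the approximating processes carry a matching jump at $t_n\to b_u$ of size $p^{-1}{\mathcal L}_u(0)\to\bar{\mathcal X}_u(0)$, and such aligned jumps are handled by the Skorokhod time-change. What you do need (and have) is that $\bar{\mathcal X}_u$ has no jump at age $0^+$, which is automatic by right-continuity. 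Apart from this wording issue, the use of Skorokhod representation, componentwise application of $\Phi$, and then Proposition~2.2 of \cite{JS03} to reassemble the joint $J_1$ convergence via absence of common jump times is exactly the mechanism already used in Lemma~\ref{Lj1}, and it works here for the same reason.
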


\subsection{Proof of Theorem 1}

We still need to introduce a few definitions and technical estimates.
For  $k\geq 1$, let $\U_k$ denote the $k$-regular tree with height $k$, that is  $\U_k\coloneqq \bigcup_{i=0}^k[k]^i$ with $[k]=\{1,\ldots, k\}$. 

\begin{lemma} \label{Lj5} We have
$$\lim_{k\to \infty}  \sup_{t\geq 0}\sum_{u\in\U\backslash \U_k}  \bar {\mathcal X}^3_u(t)= 0 \qquad \hbox{a.s.}$$

\end{lemma}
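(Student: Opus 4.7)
The strategy is to combine the branching property of the cell system with Doob-type maximal inequalities, and exploit the degeneracy of the critical cubic martingale. The starting point is the observation that since $\kappa(3) = 0$, the total cubic mass
$$ M(t) \;:=\; \sum_{u\in\U} \bar{\mathcal X}^3_u(t), \qquad t \geq 0 $$
is a non-negative martingale with $M(0) = 1$; moreover, a direct computation from the explicit formula $\kappa(q) = (4\sqrt{\pi}/3)\,\Gamma(q-3/2)/\Gamma(q-3)$ shows that $\kappa'(3) > 0$, hence $M$ is \emph{not} uniformly integrable and $M_\infty = 0$ almost surely (Biggins--Lyons). The plan would be carried out in four steps.

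\textbf{Step 1 (branching decomposition along a stopping line).} Let $\mathscr{G}_k \coloneqq \{v \in \U\setminus\U_k : v^- \in \U_k\}$ be the set of ``first-exit'' cells from $\U_k$. Every cell outside $\U_k$ descends uniquely from some $v\in\mathscr{G}_k$, giving
$$ \sum_{u\in\U\setminus\U_k}\bar{\mathcal X}^3_u(t) \;=\; \sum_{v\in\mathscr{G}_k}\mathscr{M}_v(t), \qquad \mathscr{M}_v(t) \coloneqq \sum_{u:v\preceq u}\bar{\mathcal X}^3_u(t).$$
By the branching property and self-similarity of the cell process, conditionally on the stopping-line data $\sigma\{(b_v,x_v) : v \in \mathscr{G}_k\}$, the processes $(\mathscr{M}_v(b_v+\,\cdot\,))_{v\in\mathscr{G}_k}$ are mutually independent non-negative martingales with $\mathscr{M}_v(b_v) = x_v^3$, each converging a.s.\ to $0$.

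\textbf{Step 2 (maximal inequality).} Doob's weak $L^1$ maximal inequality applied subtree by subtree, together with the conditional independence, gives
$$ \mathbb{P}\bigl(\sup_{t\geq 0}\sum_{u\in\U\setminus\U_k}\bar{\mathcal X}^3_u(t)\geq\lambda \,\big|\, \mathscr{G}_k \bigr) \;\leq\; \frac{1}{\lambda}\sum_{v\in\mathscr{G}_k} x_v^3. $$

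\textbf{Step 3 (vanishing of the boundary cubic mass).} The heart of the argument is to show that the boundary mass $S_k \coloneqq \sum_{v\in\mathscr{G}_k} x_v^3$ tends to $0$ in probability as $k \to \infty$. I would observe that $S_k$ is the $t\to\infty$ limit of the truncated process
$$ \bar M^{(k)}(t) \;\coloneqq\; \sum_{u\in\U_k,\ \text{alive at }t} \bar{\mathcal X}^3_u(t) + \sum_{v\in\mathscr{G}_k,\ b_v\leq t} x_v^3, $$
which, thanks to $\kappa(3) = 0$, is a non-negative martingale with $\bar M^{(k)}(0) = 1$. A stopping-line optional-sampling argument combined with the degeneracy $M_\infty = 0$ a.s.\ and monotone convergence (letting $k \to \infty$, so $\mathscr{G}_k$ retreats to infinity in the tree) transfers the vanishing of $M_\infty$ to $S_k$.

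\textbf{Step 4 (a.s.\ convergence via monotonicity).} Since $\U_k \subset \U_{k+1}$, the random variable $\sup_{t\geq 0}\sum_{u\in\U\setminus\U_k}\bar{\mathcal X}^3_u(t)$ is non-increasing in $k$. Combined with the convergence in probability of Steps 2--3, this non-negative, non-increasing sequence must converge to $0$ almost surely, establishing the lemma.

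The main obstacle I anticipate is Step 3: making rigorous the vanishing of $S_k$ requires a careful stopping-line analysis of the critical cubic martingale, fully exploiting its non-UI character. The remaining steps are fairly mechanical once this degeneracy of the boundary mass is in hand.
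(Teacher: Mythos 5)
Your proposal takes a genuinely different route from the paper, and it has a gap in the step you yourself flag as the heart of the matter.

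The paper's proof is much shorter and entirely sidesteps the critical exponent $q=3$: it picks any $q\in(2,3)$, so that $\kappa(q)<0$ strictly, invokes Corollary 4 of \cite{BeGF} to get
$\E\bigl(\sum_{u\in\U}\sup_{t\geq 0}\bar{\mathcal X}^q_u(t)\bigr)<\infty$, concludes $\sum_{u\in\U\setminus\U_k}\sup_t\bar{\mathcal X}^q_u(t)\to 0$ a.s., and then passes from $q$ to $3$ by a trivial comparison (for $k$ large all the remaining $\sup_t\bar{\mathcal X}_u(t)$ are at most $1$, so the $\ell^3$-sum is dominated by the $\ell^q$-sum). The strict negativity of $\kappa$ on $(2,3)$ is what makes the total expectation finite, which is exactly the uniform bound that your approach has to manufacture by hand.

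You instead work directly at $q=3$, where $\kappa(3)=0$ and the cubic mass is a genuine (non-UI) martingale. Steps~1, 2 and 4 are sound: the decomposition along the first-exit line $\mathscr G_k$ is correct, Doob's weak maximal inequality applied to the conditionally-submartingale sum $\sum_v\mathscr M_v$ does give the bound $S_k/\lambda$, and the monotonicity argument in Step~4 correctly upgrades convergence in probability to a.s.\ convergence. The gap is Step~3. You assert that ``a stopping-line optional-sampling argument combined with $M_\infty=0$ a.s.\ and monotone convergence transfers the vanishing of $M_\infty$ to $S_k$,'' but there is no simple order relation between $S_k$ and $M_\infty$: because cells in $\mathscr G_k$ are frozen in $\bar M^{(k)}$ but continue to evolve in $M$, the difference $M(t)-\bar M^{(k)}(t)$ is a signed martingale, not a non-negative quantity, so neither $S_k\leq\lim_t M(t)$ nor $\E[S_k]\leq\E[M_\infty]$ follows. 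What you really need is that the cut-set martingale $(S_k)_k$ is itself non-UI, which amounts to re-deriving the degeneracy of the critical additive martingale along the specific stopping lines $\mathscr G_k$ (and one must also verify that these stopping lines are admissible: they are not monotone in the obvious sense, since a vertex $v$ with a large label coordinate lies on $\mathscr G_k$ for many consecutive values of $k$). This is genuine work, comparable in difficulty to the statement you are trying to prove, and as written it is not a proof. If you want to argue at $q=3$ you would need either to prove $\E[S_k]\to 0$ directly (from a Biggins--Lyons spinal argument adapted to stopping lines) or find a comparison inequality; the paper's choice of $q<3$ makes all of that unnecessary.

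A minor additional point: the criterion for non-uniform integrability of $M$ is not simply ``$\kappa'(3)>0$''; what matters is that $3=\omega_+$ is the \emph{larger} root of $\kappa$, with $\omega_-=2$ being the Malthusian one, as in \cite{BeGF}.
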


\begin{proof} Pick any $2<q<3$, so that  $ \kappa(q) <0$. Corollary 4 in  \cite{BeGF} shows that 
$$\E\left( \sum_{u\in\U} \sup_{t\geq 0} \bar {\mathcal X}^q_u(t)\right) <\infty,$$
and as a consequence 
$$\lim_{k\to \infty}  \sum_{u\in\U\backslash \U_k} \sup_{t\geq 0} \bar {\mathcal X}^q_u(t) =0\qquad \hbox{a.s.}$$
This readily entails our claim. 
\end{proof} 

Next, we fix $p\geq 1$ and work under $\P^{(p)}$. For every $\varepsilon>0$,  we say that an individual $u\in\U$ is $(\varepsilon,p)$-good and then write $u\in{\mathbb G}(\varepsilon,p)$ if and only if the perimeter of the initial cycle of the left-twig labelled by each of its ancestors (including $u$ itself) is greater than $\varepsilon p$, i.e. 
$$ {\mathcal L}_v(0)>\varepsilon p \qquad \hbox{for all }v\preceq u.$$

\begin{lemma} \label{Lj7} We have for every $\varepsilon >0$ that
$$\lim_{k\to \infty} \lim _{p\to \infty} \P^{(p)}\left( {\mathbb G}(\varepsilon,p) \subseteq  \U_k \right) = 1.$$

\end{lemma}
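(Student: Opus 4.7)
\emph{Plan.} The key observation is that the random subset ${\mathbb G}(\varepsilon,p)\subseteq\U$ is closed under the ancestry relation $\preceq$, and any such ancestor-closed subset $G\subseteq\U$ satisfies $G\subseteq\U_{|G|}$. Indeed, for $u=u_1\cdots u_m\in G$, all prefixes belong to $G$ (so the depth $m\leq |G|$), and since the children of each cell are labelled in \emph{decreasing} order of initial size, $u_1\cdots u_j\in G$ forces the parent $u_1\cdots u_{j-1}$ to have at least $u_j$ good children (namely its $u_j$ largest ones), so each coordinate $u_j\leq |G|$. Therefore
\[\P^{(p)}\bigl({\mathbb G}(\varepsilon,p)\not\subseteq\U_k\bigr)\leq \P^{(p)}\bigl(|{\mathbb G}(\varepsilon,p)|>k\bigr),\]
and it suffices to show that $|{\mathbb G}(\varepsilon,p)|$ is tight as $p\to\infty$. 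By Markov's inequality, this follows from the moment bound $\sup_p\E^{(p)}[|{\mathbb G}(\varepsilon,p)|]<\infty$, which in turn, using that $f(m)\geq c\, m^3$ for some $c>0$ (from \eqref{def:f}), reduces to
\[\sup_{p\geq 1}\,\frac{1}{f(p)}\,\E^{(p)}\!\left[\sum_{u\in\U}f(\mathcal L_u(0))\right]<\infty.\tag{$*$}\]

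To prove $(*)$, I decompose generation by generation, using the algorithm $\mathcal A'$ from the proof of Lemma \ref{Lj1} which explores the ancestral left-twig completely, then the left-twigs of the first generation in their specified order, and so on. At the stopping time $\tau_n$ at which generations $0,\ldots,n-1$ have all been fully explored, the cycles alive in $\mathcal H_{\tau_n}$ are exactly the initial cycles of generation-$n$ cells; hence $M_{\tau_n}=\sum_{|u|=n}f(\mathcal L_u(0))$. Corollary \ref{cor:stopping} combined with the optional sampling identity \eqref{eq:stopping} then yields
\[\phi_n(p)\coloneqq\E^{(p)}\!\left[\sum_{|u|=n}f(\mathcal L_u(0))\right]=f(p)\cdot\P^{(p)}_{\infty}\bigl(\tau_n<\infty\bigr).\]

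Under $\P^{(p)}_\infty$, realising the event $\{\tau_n<\infty\}$ requires every cell of generations $0,\ldots,n-1$ to be completely explored in finite time under the UIPT, which in turn forces the locally-largest chain of each such cell to eventually leave the spine cycle bounding the infinite end of the UIPT for a finite region. By the branching structure (each daughter on the finite side of a $G$-event evolves as an independent Boltzmann cell system, while the unique spine daughter evolves under its own UIPT measure), a uniform contraction $\sup_{p\geq 1}\P^{(p)}_\infty(\tau_\varnothing<\infty)\leq\alpha<1$ propagates to $\P^{(p)}_\infty(\tau_n<\infty)\leq\alpha^n$, and summing the resulting geometric series gives $(*)$, as well as Lemma \ref{Lj7}.

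The main obstacle is establishing the uniform contraction $\sup_p\P^{(p)}_\infty(\tau_\varnothing<\infty)\leq\alpha<1$: one must show that, under the UIPT of the $p$-gon, the probability that the ancestral left-twig terminates in finite time stays bounded away from $1$ uniformly in $p$. For large $p$ this is intuitively clear since the spine cycle of the UIPT has typical perimeter of order $p$ and dominates the smaller side of any $G$-event, so the locally-largest chain agrees with the spine for an extended time; for small $p$, the bound can be verified via the explicit peeling transitions of Section~\ref{sec:onestepboltz}. An alternative route that avoids this contraction altogether is to derive $(*)$ directly from the scaling-limit summability $\E[\sum_u \bar{\mathcal X}_u(0)^q]<\infty$ for $q\in(2,3)$ (Corollary~4 of \cite{BeGF}, as used in Lemma \ref{Lj5}), combined with a uniform-integrability argument for the first-generation sum $p^{-q}\sum_i\mathcal L_i(0)^q$ based on Proposition \ref{prop:scalingllcycle} and a branching recursion for $\Phi(p)\coloneqq\E^{(p)}[\sum_u\mathcal L_u(0)^q]$.
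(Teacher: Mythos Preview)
Your combinatorial reduction---that $\mathbb G(\varepsilon,p)$ is ancestor-closed and hence $\mathbb G(\varepsilon,p)\subseteq\U_{|\mathbb G(\varepsilon,p)|}$---is correct, and the paper uses an argument of exactly this flavour (via the decreasing labelling of children) in the final step of its own proof.

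The genuine gap is that the moment bound $(*)$ is \emph{false}: your contraction constant satisfies $\alpha=\sup_p\P_\infty^{(p)}(\tau_\varnothing<\infty)=1$, not $\alpha<1$. Under $\P_\infty^{(p)}$, the root left-twig terminates precisely when the spine perimeter chain drops by more than half its value at some $G$-event. By the Remark following Proposition~\ref{prop:scalingllcycle}, this chain rescales to the $3/2$-stable process $S^+$ conditioned to stay positive; its Lamperti exponent has L\'evy measure charging all of $(-\infty,0)$ (the jumps being inherited from the unconditioned stable process), so $S^+$ almost surely drops below half its value at some finite time. Since the discrete spine chain is transient, the strong Markov property at first passage above any level $M$ then forces $\P_\infty^{(p)}(\tau_\varnothing=\infty)=0$ for every $p$. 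Consequently $\phi_n(p)=f(p)$ for all $n$ and $\E^{(p)}\bigl[\sum_u f(\mathcal L_u(0))\bigr]=\infty$; this is exactly the borderline case $\kappa(3)=0$ in \eqref{eq:mireille}. Your alternative route via $q\in(2,3)$ moments is in principle viable, but a uniform-in-$p$ contraction $\E^{(p)}\bigl[\sum_{|u|=1}\mathcal L_u(0)^q\bigr]\leq\beta\, p^q$ with $\beta<1$ is itself nontrivial and you do not carry it out.

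The paper avoids any uniform discrete moment bound. It first uses the continuous summability $\E\bigl[\sum_u\bar{\mathcal X}_u^q(0)\bigr]<\infty$ for $q\in(2,3)$ (from \cite{BeGF}), then transfers this to the discrete model only on a \emph{finite} boundary set $\partial\U_k\subset\U\setminus\U_k$ via the finite-dimensional weak convergence of Corollary~\ref{Cj1}, and finally shows combinatorially (as you do) that any $(\varepsilon,p)$-good vertex outside $\U_k$ forces some $u\in\partial\U_k$ with $\mathcal L_u(0)>\varepsilon p$.
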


\begin{proof}  We know from 
Lemma 3 in  \cite{BeGF} that 
$$\E\left( \sum_{u\in\U}  \bar {\mathcal X}^q_u(0)\right) <\infty$$
for every $2<q<3$, and as a consequence 
$$\lim_{k\to \infty}  \sum_{u\in\U\backslash \U_k}  \bar {\mathcal X}^q_u(0) =0\qquad \hbox{a.s.}$$
It follows that for every $\varepsilon >0$, 
$$\lim_{k\to \infty} \P\left( \exists u\in \U\backslash \U_k: \bar {\mathcal X}_u(0) > \varepsilon/2\right) = 0.$$

Write $\partial \U_k$ for the set of individuals $u=(u_1, \ldots) \in\U$ with either $|u|=k+1$ and $u_i\leq k$ for all $i=1, \ldots, k+1$, or $|u|\leq k$, $ u_{|u|}=k+1$ and 
$u_i \leq  k$ for every $i <|u|$.
So $\partial \U_k$ is a finite subset of $\U\backslash \U_k$, and we deduce from above and Corollary \ref{Cj1} that \begin{equation} \label{presquebon}
\lim_{k\to \infty} \lim_{p\to \infty}\P^{(p)} \left(\exists u\in   \partial \U_k: {\mathcal L}_u(0) > \varepsilon p\right) =0. 
\end{equation}

 Now suppose $v=(v_1, \ldots)\in \U\backslash \U_k$ is $(\varepsilon,p)$-good, that is   ${\mathcal L}_w(0)\geq \varepsilon p$
for all $w\preceq v$. Consider first the case 
where $|v|\geq k+1$ and $v_i\leq k$ for every $i=1, \ldots, k+1$, then $(v_1, \ldots, v_{k+1})$ is an ancestor of $v$ which belongs to $\partial \U_k$.
Next consider the complementary case, so
$$j\coloneqq\inf\{i\geq 1: v_i\geq k+1\}\leq k+1;$$
then $w=(v_1, \ldots, v_j)$ is an ancestor of $v$ and thus ${\mathcal L}_w(0)> \varepsilon p$. Note that $w'=(v_1, \ldots, v_{j-1}, k+1)\in \partial \U_k$ and because children are listed in the decreasing order of their sizes at birth and $v_j\geq k+1$, we have also ${\mathcal L}_{w'}(0)\geq{\mathcal L}_{w}(0) >  \varepsilon p$. Summarizing, if $v\in \U\backslash \U_k$ is $(\varepsilon,p)$-good, then there exists 
$u\in\partial \U_k$ with ${\mathcal L}_u(0) > \varepsilon p$. Our claim thus follows from \eqref{presquebon}. 
\end{proof}

We can now deduce from  Proposition \ref{prop:l3lostcycles} the following limit.

\begin{lemma} \label{Lj6} We have for every $\delta >0$ that
$$\lim_{k\to \infty} \lim_{p\to \infty} \P^{(p)}\left( \sup_{t\geq 0} \sum_{u\in\U\backslash \U_k}  p^{-3} L^3_u(t )>\delta \right) = 0.$$

\end{lemma}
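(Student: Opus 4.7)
The plan is to introduce an auxiliary cut-off parameter $\varepsilon>0$ (to be sent to $0$ at the very end) and reduce the lemma to a combination of Lemma~\ref{Lj7} and Proposition~\ref{prop:l3lostcycles}. The core observation is that on the event $\{\mathbb{G}(\varepsilon,p)\subseteq \U_k\}$, which has probability tending to $1$ by Lemma~\ref{Lj7}, the sum over $u\in\U\setminus \U_k$ is really a sum over non-good individuals, and those only carry cycles that are ``lost'' at cutoff level $\varepsilon p$ in the sense of Proposition~\ref{prop:l3lostcycles}.

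The key step is to establish the deterministic inequality
$$\sum_{u\notin \mathbb{G}(\varepsilon,p)}L_u^3(r)\ \leq\ \sum_{i\geq 1}\bigl(\ell_i^{<\varepsilon p}(r)\bigr)^3,\qquad r\geq 0.$$
To prove this I would fix any $u\notin \mathbb{G}(\varepsilon,p)$, pick the ancestor $v\preceq u$ closest to the root with $|\mathscr{C}_v|\leq \varepsilon p$ (such $v$ exists and is $\neq\varnothing$ provided $\varepsilon<1$), and argue that $\mathscr{C}_v$ is itself a descendant in $\mathbb{C}$ of some $\mathsf{C}_j^{<\varepsilon p}$. The reasoning is that the cutoff algorithm $\mathcal{A}^{<\varepsilon p}$ explores $\mathbb{C}$ from the root and freezes every cycle of length $<\varepsilon p$ it creates, so either it actually creates $\mathscr{C}_v$, in which case $\mathscr{C}_v$ is one of the $\mathsf{C}_j^{<\varepsilon p}$, or it stopped beforehand on some cycle that is then an ancestor of $\mathscr{C}_v$ in $\mathbb{C}$ and plays the role of $\mathsf{C}_j^{<\varepsilon p}$. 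Every cycle on the left-twig of $u$ is a descendant of $\mathscr{C}_v$, hence of some $\mathsf{C}_j^{<\varepsilon p}$, so all cycles counted by $L_u$ are lost cycles, yielding the inequality. The minor borderline case $|\mathscr{C}_v|=\varepsilon p$ can be bypassed by using cutoff level $2\varepsilon p$ instead, which is harmless since $\varepsilon$ will tend to $0$.

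Once this inclusion is in hand, on the event $\{\mathbb{G}(\varepsilon,p)\subseteq \U_k\}$ one has $\U\setminus \U_k\subseteq \U\setminus \mathbb{G}(\varepsilon,p)$, and a union bound gives
$$\P^{(p)}\Bigl(\sup_{t\geq 0}\sum_{u\in\U\setminus \U_k}p^{-3}L_u^3(t)>\delta\Bigr)\leq \P^{(p)}\bigl(\mathbb{G}(\varepsilon,p)\not\subseteq \U_k\bigr)+\P^{(p)}\Bigl(\sup_{t\geq 0}p^{-3}\sum_{i\geq 1}\bigl(\ell_i^{<\varepsilon p}(t)\bigr)^3>\delta\Bigr).$$
Taking $\limsup_{k\to\infty}\limsup_{p\to\infty}$ for fixed $\varepsilon>0$, the first term vanishes by Lemma~\ref{Lj7}; the second term is dominated by $\sup_{p\geq 1}\P^{(p)}(\cdots)$, which is independent of $k$ and $p$ and tends to $0$ as $\varepsilon\to 0$ by Proposition~\ref{prop:l3lostcycles}. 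Since the left-hand side does not depend on $\varepsilon$, sending $\varepsilon\to 0$ closes the argument. The main obstacle will be the geometric inclusion in the second paragraph: a ``good'' left-twig in the sense of $\mathbb{G}(\varepsilon,p)$ may still contain late cycles of length $<\varepsilon p$, so the inclusion must be driven by the non-good condition on the left-twig tree and cannot be checked pointwise on cycles.
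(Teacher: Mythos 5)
Your plan is correct and follows essentially the same route as the paper's proof: the paper also fixes an auxiliary $\varepsilon>0$, observes in one sentence that $\{L_u(t):u\in\U\setminus\mathbb{G}(\varepsilon,p)\}$ is contained in the lost cycles at cutoff level $\varepsilon p$, and then combines Proposition~\ref{prop:l3lostcycles} (to control the sum over $\U\setminus\mathbb{G}(\varepsilon,p)$, uniformly in $p$, for small $\varepsilon$) with Lemma~\ref{Lj7} (to pass from $\U\setminus\U_k$ to $\U\setminus\mathbb{G}(\varepsilon,p)$), via a union bound and a two-parameter limit exactly as you do. The only real difference is that the paper asserts the inclusion $\{L_u(t):u\notin\mathbb{G}(\varepsilon,p)\}\subseteq\{\ell_i^{<\varepsilon p}(t)\}$ in a single sentence, while you spend a paragraph justifying it by climbing the ancestor chain of $u$ to the first $v$ with $|\mathscr{C}_v|\leq\varepsilon p$ and arguing $\mathscr{C}_v$ is frozen or a descendant of a frozen cycle; this is the right argument, and your flagging of the borderline case $|\mathscr{C}_v|=\varepsilon p$ (the definition of $\mathbb{G}(\varepsilon,p)$ uses a strict inequality whereas the cutoff freezes at perimeter strictly below $\varepsilon p$) is a legitimate, if harmless, observation that the paper silently glosses over — your fix of passing to cutoff level $2\varepsilon p$ resolves it cleanly since $\varepsilon\to0$ anyway.
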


\begin{proof} Let $p\geq 1$ be fixed and work under $\P^{(p)}$. Note that  the family 
$\{L_u(t): u\in \U\backslash {\mathbb G}(\varepsilon,p)\}$ is contained in the family of perimeters of the lost cycles  at cutoff level $\varepsilon p$ and height $t $ introduced in Section~\ref{sec:L3}. 
 Thanks to Proposition \ref{prop:l3lostcycles}, for every $\eta >0$, we may  choose  $\varepsilon >0$ sufficiently small so that
$$\sup_{p\geq 1} \P^{(p)}\left( \sup_{t\geq 0} \sum_{u \in\U\backslash {\mathbb G}(\varepsilon,p)} p^{-3} L^3_u(t)>\delta \right) < \eta.$$
Then, thanks to Lemma \ref{Lj7}, for every $k$ sufficiently large, we may choose $p_k$ such that 
$$\sup_{p\geq p_k} \P^{(p)}\left( {\mathbb G}(\varepsilon,p) \not \subseteq  \U_k \right) <\eta, $$
and then
$$\sup_{p\geq p_k} \P^{(p)}\left( \sup_{t\geq 0} \sum_{u \in\U\backslash \U_k} p^{-3} L^3_u(t)>\delta \right) < 2\eta.$$
As $\eta$ is arbitrarily small, this proves our claim. 
\end{proof}

We are now able to establish Theorem \ref{thm:main}.
\begin{proof}  We now view the families $(L_u(t): u\in\U)$ and $(\bar X_u(t): u\in\U)$ as random variables in $\ell^3(\U)$, and thus
 $(L_u(t): u\in\U)_{t\geq 0}$ and $(\bar X_u(t): u\in\U)_{t\geq 0}$ as c\`adl\`ag processes with values in the complete metric space $\ell^3(\U)$ (see Corollary 4 in \cite{BeGF}). 
 Using Lemmas \ref{Lj5} and \ref{Lj6}, it is  now straightforward  to reinforce the weak convergence in the sense of finite-dimensional distributions stated in Corollary \ref{Cj2},
to weak convergence in the sense of Skorokhod for c\`adl\`ag processes with values in $\ell^3(\U)$. 

Comparing the definitions and notation of the preceding section and that of Theorem \ref{thm:main},  we see that
${\bf X} \left(  \frac{2\tone}{\aone} \times t \right)$ is obtained by ranking the elements of the family $\{\bar X_u(t): u\in\U\}$ in the decreasing order. 
Since this operation decreases the $\ell^3$-distance (see, e.g.~Theorem 3.5 in \cite{LL}), Theorem \ref{thm:main} thus follows from above. 
\end{proof}

\section{Metric approximation by the cut-off}
\label{bigsec:proofthm2}

If $T^{(p)}$ is a Boltzmann triangulation of the $p$-gon, recall that  $ \mathsf{Cut}(T^{(p)}, \varepsilon p)$ denotes the triangulation with holes obtained by performing on $T^{(p)}$ a branching peeling by layers exploration, frozen below level $ \varepsilon p$. In the proof of Theorem \ref{thm:main}, we have seen, roughly speaking, that  $ \mathsf{Cut}(T^{(p)}, \varepsilon p)$ is, asymptotically, a good approximation of $T^{(p)}$ in the $\ell^{3}$ sense, meaning that the sum of the cubes of the length of the cycles at heights of $ T^{(p)} \backslash  \mathsf{Cut}(T^{(p)}, \varepsilon p)$  becomes negligible. 

The goal of this section is to establish Theorem \ref{thm:no tentacles}, which tells us that $ \mathsf{Cut}(T^{(p)}, \varepsilon p)$ is, asymptotically, a good approximation of $T^{(p)}$ also in the metric sense, or, as explained in the end of the Introduction, that asymptotically there are no ``long  and thin tentacles'' in $ T^{(p)} \backslash  \mathsf{Cut}(T^{(p)}, \varepsilon p)$. To this end, if $ \mathsf{Height}( \mathbf{t})$  denotes the maximal height of a vertex of a triangulation with a boundary $ \mathbf{t}$, we will show that for any $\delta>0$ we have 
  \begin{eqnarray} \label{eq:goaltheoremtentacles} \sup_{p \geq 1}  \mathbb{P}\left(  \sup_{i \geq 1}\mathsf{Height}(  \mathrm{T}_{i}^{< \varepsilon p}) \geq \delta \sqrt{p}\right) \quad  \xrightarrow[ \varepsilon\to0] \quad  0,  \end{eqnarray}
where we recall that $  \mathrm{T}_{1}^{< \varepsilon p}, \mathrm{T}_{2}^{ < \varepsilon p}, \ldots$ are the triangulations with boundaries that fill-in the holes of $ \mathsf{Cut}( \mathbf{t}, \varepsilon p)$ in $T^{(p)}$. The last display  clearly entails Theorem \ref{thm:no tentacles}.

\subsection{A first approach}
The convergence \eqref{eq:goaltheoremtentacles} would readily follow from Lemma \ref{lem:epsexplo} if the following estimate was established:

\begin{conjecture} \label{open:hauteur} As $\lambda \rightarrow \infty$,
$$ \sup_{p \geq 1}\mathbb{P}^{(p)}(  \mathsf{Height}( \mathbf{t}) \geq \lambda \sqrt{p} ) = O( \lambda^{-6}).$$
\end{conjecture}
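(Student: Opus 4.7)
The strategy I would pursue is to lift the problem to the UIPT of the $p$-gon via the cycle martingale $M$, exploiting the explicit scaling limit for the perimeter of the separating cycle under $\P^{(p)}_{\infty}$ established in \cite{CLGpeeling}. Fix $\lambda \geq 1$, run the peeling by layers algorithm of Section~\ref{sec:branchedpeelinglayers}, and for $r_{\lambda}\coloneqq \lfloor \lambda\sqrt p\rfloor$ set $\sigma\coloneqq \theta_{r_{\lambda}}$, so that $\mathcal H_{\sigma}=B_{r_{\lambda}}(\mathbf t)$ by \eqref{eq:stoppingtimeboule}. The event $\mathsf{Cont}\coloneqq \{B_{r_{\lambda}}(\mathbf t)\neq \mathbf t\}$ belongs to $\mathcal F_{\sigma}$ and coincides with $\{\mathsf{Height}(\mathbf t)\geq r_{\lambda}\}$ under $\P^{(p)}$, whereas $\P^{(p)}_{\infty}(\mathsf{Cont})=1$ since the UIPT of the $p$-gon has infinite height almost surely.

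Summing the Radon--Nikodym identity \eqref{eq:radonnikodym} over the countably many pairs $(k,h)$ with $\sigma=k$ on $\mathcal H_k = h$ and $\mathsf{Cont}(h)$ gives
\[
\P^{(p)}\bigl(\mathsf{Height}(\mathbf t)\geq r_{\lambda}\bigr)\;=\;f(p)\,\E^{(p)}_{\infty}\!\left[\frac{1}{M_{\sigma}}\right].
\]
Let $\mathscr C^{\infty}_{\sigma}$ denote the unique cycle of $\mathcal H_{\sigma}$ separating the external boundary from infinity under $\P^{(p)}_{\infty}$, and set $\mathcal P_{\sigma}\coloneqq |\mathscr C^{\infty}_{\sigma}|$. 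From $M_{\sigma}\geq f(\mathcal P_{\sigma})$ together with the cubic asymptotics $f(m)\geq c\,m^{3}$ and $f(p)\leq c'p^{3}$ extracted from \eqref{def:f}, the conjecture reduces to the uniform moment bound
\[
\E^{(p)}_{\infty}\!\left[\mathcal P_{\sigma}^{-3}\right]\;\leq\;\frac{C}{\lambda^{6}p^{3}}\qquad\text{for all } p\geq 1,\ \lambda\geq 1.
\]

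This would follow in turn from a uniform-in-$p$ lower-tail estimate
\[
\P^{(p)}_{\infty}\bigl(\mathcal P_{\sigma}\leq \eta\,\lambda^{2}\, p\bigr)\;\leq\; C\,\eta^{\alpha}\qquad\text{for all }\eta\in(0,1],
\]
with some fixed exponent $\alpha>3$: a dyadic decomposition of $\E^{(p)}_{\infty}[\mathcal P_{\sigma}^{-3}]$ according to the value of $\mathcal P_{\sigma}/(\lambda^{2}p)$ then yields the required bound. Heuristically, this lower tail reflects the self-similar scaling limit obtained by adapting \cite[Section~4]{CLGpeeling} (type II) to the present type I setting as in \cite[Section~6]{CLGpeeling}: at height $r_{\lambda}$ the rescaled perimeter $p^{-1}\mathcal P_{\sigma}$ converges in distribution to a self-similar Markov process of index $-1/2$ evaluated at a time proportional to $\lambda$, whose typical value is of order $\lambda^{2}$ by self-similarity and which admits a nondegenerate lower tail.

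The genuine obstacle is to upgrade this scaling-limit heuristic into a quantitative small-ball estimate that is uniform in both $p$ and $\lambda$. Concretely, in the random-walk representation of the separating cycle's perimeter under $\P^{(p)}_{\infty}$ \cite[Section~3]{CLGpeeling}, namely a downward-skip-free random walk conditioned to stay positive, one has to control the small-ball probability at a random time of order $p^{3/2}\lambda^{3}$ on the spatial scale $\lambda^{2}p$, with polynomial decay of exponent $\alpha>3$ \emph{also} in the parameter $\lambda$. Proving such a bound requires considerably more than the convergence in distribution given in \cite{CLGpeeling}, and this is presumably why the authors circumvent the conjecture entirely and establish Theorem~\ref{thm:no tentacles} via the coupling with uniform triangulations of the sphere (Lemma~\ref{lem:coupling}), which delivers the weaker conclusion \eqref{eq:goaltheoremtentacles} without requiring any quantitative polynomial control in $\lambda$.
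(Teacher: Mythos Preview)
This statement is a \emph{conjecture}, not a theorem: the paper explicitly says ``we have not been able to establish the estimate of Conjecture~\ref{open:hauteur}, and were forced to take a different path'' and then proves Theorem~\ref{thm:no tentacles} via the coupling of Lemma~\ref{lem:coupling} instead. So there is no proof in the paper to compare against, and you have correctly recognised this in your final paragraph.

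Your reduction is nonetheless sound and worth recording. The Radon--Nikodym identity you write does follow from~\eqref{eq:radonnikodym} after summing over the pairs $(k,h)$ realising $\sigma$; strictly speaking one has
\[
\{\mathsf{Height}(\mathbf t)\geq r_{\lambda}+1\}\subseteq\{B_{r_{\lambda}}(\mathbf t)\neq\mathbf t\}\subseteq\{\mathsf{Height}(\mathbf t)\geq r_{\lambda}\}
\]
(a lone vertex at height exactly $r_{\lambda}$ whose neighbours all sit at height $r_{\lambda}-1$ yields $B_{r_{\lambda}}(\mathbf t)=\mathbf t$), so your equality is really an inclusion up to a shift by~$1$, which is harmless for the asymptotic in~$\lambda$. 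The subsequent chain $M_{\sigma}\geq f(\mathcal P_{\sigma})\geq c\,\mathcal P_{\sigma}^{3}$ and the dyadic reduction to a lower-tail bound $\P^{(p)}_{\infty}(\mathcal P_{\sigma}\leq\eta\lambda^{2}p)\leq C\eta^{\alpha}$ with $\alpha>3$ are both correct.

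Where your proposal honestly stops is exactly where the difficulty lies, and you say so yourself. The required small-ball estimate for the separating-cycle perimeter under $\P^{(p)}_{\infty}$, uniformly in $p$ and with polynomial exponent strictly larger than~$3$ in the spatial scale, is not available from the functional limit theorems of~\cite{CLGpeeling}; it would demand quantitative lower-tail control of the hull-boundary length in the Brownian plane (cf.~\cite{CLGHull}) together with a rate in the discrete approximation. Whether the critical exponent actually exceeds~$3$ is itself not obvious. In short: your outline is a legitimate and natural route toward the conjecture, you have located the precise gap, and the paper does not fill it either.
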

Indeed, assuming Conjecture \ref{open:hauteur}, we can proceed as in the proofs of Proposition \ref{prop:l3lostcycles} or \ref{prop:volumesmall} to establish \eqref{eq:goaltheoremtentacles}: recalling that $  \mathrm{T}_{1}^{< \varepsilon p}, \mathrm{T}_{2}^{ < \varepsilon p}, \ldots$ denote the triangulations with boundaries that fill-in the holes of $ \mathsf{Cut}( \mathbf{t}, \varepsilon p)$, and that $ \mathsf{C}_{1}^{< \varepsilon p}, \mathsf{C}_{2}^{< \varepsilon p}, \ldots$ are their boundaries, by the Markovian structure of the peeling algorithm, under $\mathbb{P}^{(p)}$, the components of $ \mathbf{t} \backslash \mathsf{Cut}(\mathbf{t}, \varepsilon p)$ are, conditionally given $ \mathsf{Cut}(\mathbf{t}, \varepsilon p)$, independent Boltzmann triangulations with perimeters $ | \mathsf{C}_{1}^{ < \varepsilon p}|,| \mathsf{C}_{2}^{ < \varepsilon p}| , \ldots $. As a consequence, if Conjecture \ref{open:hauteur} holds, we would have
  \begin{eqnarray*} \Prp{ \exists \ i \geq 1 : \mathsf{Height}( \mathrm{T}_{i}^{< \varepsilon p}} \geq \delta \sqrt{p}) &\leq& \Esp{\sum_{i \geq1} \mathbb{P}^{(| \mathsf{C}_{i}^{< \varepsilon p}|)}( \mathsf{Height}( \mathbf{t}) \geq \delta \sqrt{p}) }\\ &=&\mathbb{E}\left[ \sum_{i \geq1} \mathbb{P}^{(| \mathsf{C}_{i}^{< \varepsilon p}|)}( \mathsf{Height}( \mathbf{t}) \geq   \frac{\delta \sqrt{p}}{ \sqrt{| \mathsf{C}_{i}^{< \varepsilon p}|}} \cdot \sqrt{ |\mathsf{C}_{i}^{< \varepsilon p}|}) \right]  \\
 &\underset{ \mathrm{Conj.}}{\leq}&  C \delta^{-6}   \,\mathbb{E}\left[  \sum_{i \geq 1}   \frac{| \mathsf{C}_{i}^{< \varepsilon p}|^3}{p^3}\right],\end{eqnarray*}
  for some constant $C >0$.  Letting $ \varepsilon \to 0$, we would get that \eqref{eq:goaltheoremtentacles}  holds by another appeal to  Lemma \ref{lem:epsexplo}. \medskip 
  
  However, we have not been able to establish the estimate of Conjecture \ref{open:hauteur}, and were forced to take a different path which we now explain. To control the metric structure of $T^{(p)} \backslash \mathsf{Cut}(T^{(p)}, \varepsilon p)$ it is necessary to control \emph{uniformly} the geometry of $T^{(p)}$. A natural approach would be to use variants of Schaeffer's bijection, which are usually used to obtain uniform controls on the geometry of random planar maps. However, in our particular case of triangulations with simple boundaries, such bijective techniques seem to be not very well adapted, since the topological constraint imposed on the boundary is not simply expressed in terms of labeled trees that code these maps.
  
 For this reason, we proceed as follows. We first rely on the volume estimate of Proposition \ref{prop:volumesmall} which shows that the maximal volume of the triangulations $(\mathrm{T}_{i}^{<  \varepsilon p})_{i \geq 1}$ is small compared to the total volume of $T^{(p)}$ which is of order $p^2$. We then argue that the diameter of a ball of volume $o(p^2)$ inside a large triangulation cannot be  of order  $ \sqrt{p}$ since ``$ \mathrm{volume}^{1/4} = \mathrm{distance}$''. It is possible to make  this last heuristic precise for large triangulations  of the sphere (Proposition \ref{prop:volumedistancetrig}), using the aforementioned bijective techniques. In this setup we actually know much more since convergence towards the Brownian map has been established by Le Gall \cite{LG11}. In the case of large Boltzmann triangulations of the $p$-gon, a similar convergence is expected  towards ``the Boltzmann Brownian disk'' \cite{Bet11,BM15}, but has not yet appeared {(one of the difficulties being the fact that we work with simple boundaries)}. We will thus bypass this gap by establishing a coupling tailored to our case that enables  us to embed $ T^{(p)}$ in a triangulation of the sphere with volume $\geq p^2$, and which allow us to transfer known estimates for uniform triangulations of the sphere to Boltzmann triangulations with a boundary.

\subsection{Uniform volume-distance estimates on triangulations of the sphere}

Let $T_{n}$ be a uniform triangulation of the sphere with $n$ vertices. If $\mu_{n}$ is the uniform measure on the vertices $V(T_{n})$ of $T_{n}$,  Le Gall   \cite{LG11} showed that 
 \begin{eqnarray} \label{eq:GHPtrig} ( \mathsf{V}(T_{n}), n^{-1/4}\cdot \mathrm{d_{gr}}, \mu_{n}) \quad  \xrightarrow[n\to\infty]{(d)} \quad  3^{-1/4} \cdot ( \mathbf{m}_{\infty}, D, \mu),  \end{eqnarray} where $( \mathbf{m}_{\infty}, D, \mu)$ is the so-called Brownian map endowed with its natural mass measure $\mu$, and where the convergence holds in distribution in the Gromov--Hausdorff--Prokhorov sense. Actually, \cite{LG11} only states the convergence for the Gromov--Hausdorff topology but the latter easily follows from arguments already in \cite{LG11}, see also \cite{CLGmodif}. 
 We will actually only need the fact, which follows from \eqref{eq:GHPtrig}, that any subsequential limit $( \mathbf{m}, D, \mu)$ of $( \mathsf{V}(T_{n}), n^{-1/4}\cdot \mathrm{d_{gr}}, \mu_{n})$ has a mass measure $\mu$ of full topological support. This property can be for instance seen by using the construction of the Brownian map as a quotient of the Brownian Continuum Random Tree $ \mathcal{T}_{ \mathbf{e}}$ by a certain equivalence relation \cite{LG11}. In this construction, the mass measure on the Brownian map $  \mathbf{m}_{\infty}$ is the push-forward of the mass measure on the Brownian CRT. Since the projection $\pi : \mathcal{T}_{ \mathbf{e}} \to \mathbf{m}_{\infty}$ is continuous and since the mass measure on $ \mathcal{T}_{ \mathbf{e}}$ has full support we deduce that indeed $\mu$ has full support in $ \mathbf{m}_{\infty}$.
 
 \begin{proposition} \label{prop:volumedistancetrig}For every $ \varepsilon>0$, we have
 $$\lim_{ \delta \to 0} \sup_{n \geq 0}  \mathbb{P}\big( \exists x \in \mathrm{V}(T_{n}) : |B_{ \varepsilon n^{1/4}}(x)| \leq \delta n\big) =0.$$
 \end{proposition}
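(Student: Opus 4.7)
The plan is to argue by contradiction, leveraging the Gromov--Hausdorff--Prokhorov convergence \eqref{eq:GHPtrig} together with the fact (recalled in the text just before the statement) that the mass measure of any subsequential limit has full topological support.

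Suppose the statement fails. Then I can find $\varepsilon>0$, $\eta>0$, a sequence $\delta_k\downarrow 0$ and integers $n_k$ such that $\mathbb{P}(A_k)\geq \eta$, where $A_k$ is the event that some vertex $x$ of $T_{n_k}$ satisfies $|B_{\varepsilon n_k^{1/4}}(x)|\leq \delta_k n_k$. Since any ball contains its centre, the event $A_k$ forces $\delta_k n_k\geq 1$ and hence $n_k\to\infty$. Reverse Fatou applied to the indicators $\mathbf{1}_{A_k}$ gives $\mathbb{P}(\limsup_k A_k)\geq \eta$, so on an event of positive probability infinitely many of the $A_k$ occur, and for each such $k$ I may select a ``bad'' vertex $x_k\in V(T_{n_k})$.

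I would next appeal to \eqref{eq:GHPtrig} and Skorokhod representation to realise all the rescaled spaces $(V(T_{n_k}),n_k^{-1/4} \mathrm{d_{gr}},\mu_{n_k})$ and a (suitably scaled) Brownian map $(\mathbf{m}_\infty, D,\mu)$ on a common probability space, together with isometric embeddings $\iota_{n_k},\iota$ into a Polish space $(Z,d_Z)$, such that $\iota_{n_k}(V(T_{n_k}))\to \iota(\mathbf{m}_\infty)$ in Hausdorff distance and $(\iota_{n_k})_* \mu_{n_k}\to \iota_* \mu$ weakly. Compactness of $\mathbf{m}_\infty$ and the uniformity of the Hausdorff convergence then permit one to extract a further (random) subsequence along which $\iota_{n_k}(x_k)\to \iota(x_\infty)$ for some $x_\infty\in \mathbf{m}_\infty$.

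The decisive step is to push the mass bound to the limit. Since $\iota_{n_k}$ is isometric, for all $k$ large enough the open $Z$-ball $U:=B^Z_{\varepsilon/2}(\iota(x_\infty))$ satisfies
\[
U\cap \iota_{n_k}(V(T_{n_k}))\;\subseteq\;\iota_{n_k}\bigl(B_{\varepsilon n_k^{1/4}}(x_k)\bigr),
\]
a set of $\mu_{n_k}$-mass at most $\delta_k$. Portmanteau's lemma applied to the open set $U$ and the weak convergence $(\iota_{n_k})_*\mu_{n_k}\to \iota_*\mu$ will then give $\mu(\iota^{-1}(U))\leq \liminf_k \delta_k=0$, and since $\iota^{-1}(U)$ contains a nonempty open ball of $\mathbf{m}_\infty$ centred at $x_\infty$, this contradicts the full topological support of $\mu$. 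The only mild subtlety is ensuring that one really works with GHP rather than plain GH convergence in \eqref{eq:GHPtrig} and that the embeddings are genuinely isometric so that intrinsic balls transfer correctly to ambient balls in $Z$; both points are addressed in the discussion preceding the statement, so no additional work is needed there.
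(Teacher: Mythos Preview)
Your proof is correct and follows essentially the same route as the paper's: argue by contradiction, pass to the Brownian map via the GHP convergence \eqref{eq:GHPtrig}, and contradict the full topological support of the mass measure; you simply spell out the Skorokhod/Portmanteau details that the paper leaves implicit. One minor point of ordering: the reverse Fatou step only makes sense once all the events $A_k$ live on a common probability space, so it should be placed \emph{after} the Skorokhod coupling rather than before.
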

\begin{proof} We argue by contradiction, and assume that there exists  $c>0$ and a sequence of $n_{k}\to \infty$ and $\delta_{k} \to 0$ such that, for every $k \geq0$, with probability at least $c$, there exists $x \in V(T_{n_{k}})$ whose ball of radius $ \varepsilon n_{k}^{1/4}$ has volume less than $\delta_{k}\cdot n_{k}$. Using \eqref{eq:GHPtrig}, we get that with probability at least $c>0$, we can find a point $x$ in the Brownian map $ \mathbf{m}_{\infty}$ such that its ball of radius $ 3^{1/4}\cdot \varepsilon/2$ has zero mass for $\mu$. This is absurd since the random measure $\mu$ almost surely has full topological support inside the Brownian map.\end{proof}

Our goal is now to establish an analog of Proposition \ref{prop:volumedistancetrig} for Boltzmann triangulations with a boundary.  As was previously mentioned, the analog of \eqref{eq:GHPtrig} is not yet known in this case, so we will use a different argument that involves coupling.

\subsection{Coupling triangulations with boundary and triangulations of the sphere}

Roughly speaking, the main idea is to prove that, with positive probability, a Boltzmann triangulation of the $p$-gon can be seen as a macroscopic part of a uniform triangulation of the sphere with roughly $p^2$ vertices.  Recall that by root-transformation, a triangulation of the sphere with $n$ vertices can be seen as a triangulation of the $1$-gon with $n-1$ inner vertices. Denote by  $T_{> n}$ a Boltzmann triangulation of the $1$-gon conditioned on having at least $n$ internal vertices.
In particular, conditionally on $T_{>n}$ having exactly $m \geq n$ inner vertices, the triangulation $T_{>n}$ is distributed as a uniform triangulation of the $1$-gon with $m$ inner vertices and can thus be seen as a uniform triangulation of the sphere with $m+1$ vertices. 

 \begin{lemma} \label{lem:volumedistance>n}For every $ \varepsilon>0$, we have
 $$\lim_{ \delta \to 0} \sup_{n \geq 0}  \mathbb{P}\big( \exists x \in \mathrm{V}(T_{>n}) : |B_{ \varepsilon n^{1/4}}(x)| \leq \delta n\big) =0.$$
 \end{lemma}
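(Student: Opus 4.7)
The strategy is to decompose $T_{>n}$ according to its number $N$ of inner vertices, and to transfer Proposition \ref{prop:volumedistancetrig} using the fact that, conditionally on $\{N=m\}$ with $m\geq n$, the triangulation $T_{>n}$ is uniform over $\mathcal{T}_{m,1}$ and hence, via the root-transformation of Section~\ref{sec:enumeration}, identifies with a uniform triangulation of the sphere with $m+1$ vertices. Since this transformation preserves the vertex set and alters graph distances only near the added loop, Proposition \ref{prop:volumedistancetrig} applies verbatim up to inconsequential constant factors.

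The preliminary ingredient is a uniform tail bound on $N$ under $\mathbb{P}(\,\cdot\mid N\geq n)$. By \eqref{eq:tnpexact} with $p=1$, we have $\#\mathcal{T}_{m,1}\sim C(1)\,(12\sqrt{3})^m m^{-5/2}$, so $\mathbb{P}^{(1)}(N=m)\sim c\, m^{-5/2}$ and, after summation, there exists a constant $c_0>0$ with
\[\mathbb{P}\bigl(N \geq Kn \,\big|\, N \geq n\bigr) \;\leq\; c_0\, K^{-3/2} \qquad \text{for all } n\geq 1,\ K\geq 1.\]
Now fix $\varepsilon>0$, $\delta>0$ and $K\geq 1$; write $A_{n,\delta}\coloneqq\{\exists x\in V(T_{>n}) : |B_{\varepsilon n^{1/4}}(x)|\leq \delta n\}$ and $A_{n,\delta}^{(m)}$ for the analogous event when $T_{>n}$ is replaced by a uniform element of $\mathcal{T}_{m,1}$. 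Conditioning on $N$ gives
\[\mathbb{P}(A_{n,\delta}) \;\leq\; \sup_{n\leq m\leq Kn}\mathbb{P}\bigl(A_{n,\delta}^{(m)}\bigr) + c_0\, K^{-3/2}.\]

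For $n \leq m \leq Kn$ one has $\varepsilon n^{1/4}\geq \varepsilon K^{-1/4} m^{1/4}$ and $\delta n \leq \delta m$, so by monotonicity of balls,
\[A_{n,\delta}^{(m)} \;\subseteq\; \bigl\{\exists x : |B_{\varepsilon K^{-1/4}\, m^{1/4}}(x)| \leq \delta m\bigr\},\]
and Proposition \ref{prop:volumedistancetrig}, applied with $\varepsilon K^{-1/4}$ in place of $\varepsilon$, yields $\sup_{m\geq 1}\mathbb{P}(A_{n,\delta}^{(m)}) \to 0$ as $\delta\to 0$, for each fixed $K$. Letting $\delta\to 0$ first and then $K\to\infty$ completes the proof. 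There is no serious analytic obstacle: the only subtle point is the uniformity in $n$ asserted in the lemma, which is why the tail estimate for $N$ must hold uniformly in $n$ — a property guaranteed by the explicit enumeration \eqref{eq:tnpexact}.
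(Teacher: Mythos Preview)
Your proof is correct and follows essentially the same route as the paper's: both condition on the size of $T_{>n}$, use the $K^{-3/2}$ tail bound coming from \eqref{eq:tnpexact} to handle the event $\{N\geq Kn\}$, and then apply Proposition~\ref{prop:volumedistancetrig} on the range $n\leq m\leq Kn$. You make the monotonicity step (replacing $\varepsilon n^{1/4}$ by $\varepsilon K^{-1/4}m^{1/4}$ and $\delta n$ by $\delta m$) and the root-transformation compatibility more explicit than the paper does, but the structure of the argument is identical.
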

 \begin{proof} For fixed $\alpha  \geq 1$, write 
 \begin{equation}
 \label{eq:vdist}\mathbb{P}\big( \exists x \in \mathrm{V}(T_{>n}) : |B_{ \varepsilon n^{1/4}}(x)| \leq \delta n\big) \leq \mathbb{P}\big(|T_{>n}| \geq \alpha n) + \sup_{n \leq k \leq \alpha n} \mathbb{P}\big( \exists x \in \mathrm{V}(T_{k}) : |B_{ \varepsilon n^{1/4}}(x)| \leq \delta n\big).
 \end{equation}Then, by \eqref{eq:tnpexact}, we have for $x \geq 1$,
$$\mathbb{P}(|T_{>n}| \geq x n) = \frac{\mathbb{P}^{(1)}( |\mathbf{t}| \geq x n )}{\mathbb{P}^{(1)}( |\mathbf{t}| \geq n )}  \quad     \mathop{\sim}_{n \rightarrow \infty}    \quad  \frac{ \frac{2}{3}C(1) (xn)^{-3/2} }{Z(1)} \frac{Z(1)}{ \frac{2}{3}C(1) n^{-3/2} } = x^{-3/2}.$$
As a consequence, the first term in the right-hand side of \eqref{eq:vdist} is asymptotically less than $\alpha^{-3/2}$ whereas the second one can be made arbitrarily small once $\alpha$ is fixed by letting $\delta \to 0$ and using Proposition \ref{prop:volumedistancetrig}. This completes the proof.
 \end{proof}
 
 \begin{lemma}  \label{lem:coupling}There exists an event $ \mathcal{E}_{p}$ with $\liminf_{p \rightarrow \infty} \Pr{ \mathcal{E}_{p} }>0$ such that, conditionally on $T_{>p^2}\in \mathcal{E}_{p}$, we can couple a Boltzmann triangulation of the $p$-gon $T^{(p)}$ with $T_{>p^2}$ so that 
 $$ T^{(p)} \subset T_{>p^2},$$ in the sense that $T^{(p)}$ is a sub-triangulation of $T_{>p^2}$.
 \end{lemma}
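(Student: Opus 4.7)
The strategy is to perform a peeling exploration of a Boltzmann triangulation $T\sim\mathbb{P}^{(1)}$ of the $1$-gon (equivalently, of the sphere via the root-transformation), stopped at the first time $\tau_p$ when a simple cycle of perimeter exactly $p$ is discovered; by Proposition~\ref{prop:peelinggenerallaw} the triangulation filling the enclosed hole is then automatically a Boltzmann triangulation of the $p$-gon. The lemma then boils down to comparing $\mathbb{P}^{(1)}(\tau_p<\infty)$ with $\mathbb{P}^{(1)}(|T|\geq p^2)$: both are of order $p^{-3}$, as is their intersection, so a maximal-coupling argument will produce the required pairing with uniformly positive probability.

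Concretely, I would use the peeling-by-layers algorithm modified so that only the locally largest cycle is ever peeled, with immediate filling-in of the smaller cycle created at each splitting event, so that at every step the explored triangulation $\mathcal{H}_n$ has exactly one hole. Let $\widetilde L(n)$ denote the perimeter of that hole, with $\widetilde L(0)=1$; then $M_n=f(\widetilde L(n))$ is the cycle martingale of Proposition~\ref{prop:martingale}, and $\widetilde L$ has only upward jumps of size $+1$ (at events of type $\mathsf{C}$). Under the UIPT measure $\mathbb{P}^{(1)}_\infty$, $\widetilde L$ tends to $+\infty$ while climbing one unit at a time and therefore visits every integer, so $\tau_p<\infty$ almost surely under $\mathbb{P}^{(1)}_\infty$. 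Applying the identity \eqref{eq:stopping} to $M$ at the stopping time $\tau_p$ then gives $\mathbb{P}^{(1)}(\tau_p<\infty)=f(1)/f(p)\sim c\,p^{-3}$.

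Next I would estimate $\mathbb{P}^{(1)}(\tau_p<\infty,\,|T|\geq p^2)$ by decomposing $|T|=V_{\mathrm{expl}}+|T^{(p)}_{\mathrm{in}}|$, where $V_{\mathrm{expl}}$ is the number of inner vertices of $\mathcal{H}_{\tau_p}$ and $T^{(p)}_{\mathrm{in}}$ is the triangulation filling the remaining hole. Because the ratio of the two probabilities in \eqref{eq:peellaw} is a constant independent of $\mathbf{h}$, the conditional law of $(\tau_p,\mathcal{H}_{\tau_p})$ given $\tau_p<\infty$ coincides under $\mathbb{P}^{(1)}$ and $\mathbb{P}^{(1)}_\infty$; the scaling estimate adapted from Proposition~5 of \cite{CLGpeeling} then yields $\tau_p=O(p^{3/2})$, hence $V_{\mathrm{expl}}\leq \tau_p=o(p^2)$ with high probability. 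Conditionally on $\tau_p<\infty$ and on the explored region, the peeling Markov property ensures that $T^{(p)}_{\mathrm{in}}\sim\mathbb{P}^{(p)}$, and the tail asymptotics coming from \eqref{eq:tnpexact} give $\mathbb{P}^{(p)}(|T|\geq p^2)\geq c>0$ uniformly in $p$. Combining these facts yields $\mathbb{P}^{(1)}(\tau_p<\infty,\,|T|\geq p^2)\geq c'\,p^{-3}$; since also $\mathbb{P}^{(1)}(|T|\geq p^2)\sim c''\,p^{-3}$ by the same enumerative estimate, setting $\mathcal{E}_p\coloneqq\{\tau_p(T_{>p^2})<\infty\}$ gives $\liminf_p \mathbb{P}(T_{>p^2}\in\mathcal{E}_p)>0$.

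The coupling itself is then constructed by maximally coupling the laws $\mathbb{P}^{(1)}(\cdot\mid\tau_p<\infty)$ and $\mathbb{P}^{(1)}(\cdot\mid|T|\geq p^2)$ of $T$ and $T_{>p^2}$; the total overlap equals $\mathbb{P}^{(1)}(\tau_p<\infty,\,|T|\geq p^2)/\max\{\mathbb{P}^{(1)}(\tau_p<\infty),\mathbb{P}^{(1)}(|T|\geq p^2)\}$, which is bounded away from $0$ by the preceding estimates. On the event $\{T=T_{>p^2}\}$ one declares $T^{(p)}\coloneqq T^{(p)}_{\mathrm{in}}$, which is automatically a sub-triangulation of $T_{>p^2}$; being a measurable function of $T$, its overall marginal law is the push-forward of $\mathbb{P}^{(1)}(\cdot\mid\tau_p<\infty)$ under "take the inside of the hole", which equals $\mathbb{P}^{(p)}$ by the Markov property of peeling. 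The most delicate point will be that, once we condition further on $\{T=T_{>p^2}\}$, the law of $T^{(p)}_{\mathrm{in}}$ is slightly biased towards triangulations of larger volume; this is corrected by inserting an acceptance step whose Radon--Nikodym density is uniformly bounded on $\mathcal{E}_p$ thanks to the estimate $V_{\mathrm{expl}}\leq \varepsilon p^2$ together with the tail bound $\mathbb{P}^{(p)}(|T|\geq(1-\varepsilon)p^2)\geq c>0$, so that the final acceptance probability stays bounded away from $0$.
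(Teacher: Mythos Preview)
Your overall strategy is close to the paper's, but there is a genuine gap in the volume estimate. You write $V_{\mathrm{expl}}\leq \tau_p=o(p^2)$; this inequality is false. In your exploration you immediately fill in the smaller hole at each splitting, so the inner vertices of $\mathcal{H}_{\tau_p}$ include all the vertices of those filled-in Boltzmann pieces, and their number is not controlled by the number of peeling steps. In fact, under $\mathbb{P}^{(1)}_\infty$ (equivalently, under $\mathbb{P}^{(1)}(\,\cdot\mid \tau_p<\infty)$ on $\mathcal{F}_{\tau_p}$), the volume swallowed by the time the boundary reaches perimeter $p$ is of order $p^2$, not $o(p^2)$: this is the ``$\mathrm{volume}\sim\mathrm{perimeter}^2$'' scaling for hulls in the UIPT (cf.\ \cite{CLGpeeling}). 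Consequently the bias from conditioning on $\{|T|\geq p^2\}$ is not asymptotically negligible, and your acceptance step cannot repair this in a way that still yields $T^{(p)}\subset T_{>p^2}$ almost surely: on rejected samples you would have to draw $T^{(p)}$ independently, losing the inclusion. (A smaller point: your claim that $\widetilde L\to\infty$ $\mathbb{P}^{(1)}_\infty$-a.s.\ for the \emph{locally largest} cycle is not justified either; the locally largest cycle need not be the one adjacent to infinity, so you only get $\mathbb{P}^{(1)}_\infty(\tau_p<\infty)\geq c>0$, which is still enough for the order $p^{-3}$.)

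The clean fix, which is precisely what the paper does, is to put the volume constraint inside the event itself: take $\mathcal{E}_p=\{\exists k:\ |\mathcal{H}_k|\geq p^2\text{ and }\mathcal{H}_k\text{ has a hole of perimeter exactly }p\}$. Then $\mathcal{E}_p\subset\{|T|\geq p^2\}$, and since $\mathcal{E}_p\in\mathcal{F}_{\theta_p}$, Corollary~\ref{cor:peelinggenerallaw} gives that the filling of the perimeter-$p$ hole has exactly law $\mathbb{P}^{(p)}$ with no bias and no acceptance step needed. Showing $\liminf_p\mathbb{P}_{>p^2}(\mathcal{E}_p)>0$ is then reduced (via the $+1$ upward-step property of perimeters that you already noticed) to showing that $B_{\sqrt p}$ has, with probability bounded away from~$0$, both $\geq p^2$ inner vertices and a hole of perimeter $\geq p$; this follows from the known $(r^4,r^2)$ scaling of hull volume and boundary in the UIPT together with local absolute continuity between large finite triangulations and the UIPT.
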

 
 \begin{proof} To simplify notation, we use  $\mathbb{P}_{>p^{2}}$ for the probability relative to the law of a Boltzmann triangulation of the $1$-gon conditioned on having at least $p^{2}$ internal vertices.
 Denote by $ \mathcal{H}_{0} \subset \mathcal{H}_{1} \subset \cdots \subset \mathbf{t}$ the sequence of triangulations with holes obtained by performing the branching peeling by layers exploration on $\mathbf{t}$. If $ \mathbf{h}$ is a triangulation with holes of the $1$-gon with more than $p^2$ inner vertices such that $\mathbb{P}_{>p^{2}}( \mathcal{H}_{k} = \mathbf{h})>0$, a simple adaptation of Proposition \ref{prop:peelinggenerallaw} shows that  
under $\mathbb{P}_{>p^{2}}$ and conditionally on 
$ \{ \mathcal{H}_{k} = \mathbf{h}\}$, the triangulations filling-in the holes of $ \mathbf{h}$ inside $\mathbf{t}$ are independent Boltzmann triangulations with boundaries. We can then let $ \mathcal{E}_{p}$ be the event defined by
 $$ \mathcal{E}_{p} = \left\{ \exists k \geq 1 :| \mathcal{H}_{k}| \geq p^2 \mbox{ and }  \mathcal{H}_{k} \mbox{ has a hole of perimeter exactly }p \right\}.$$

On this event $ \mathcal{E}_{p}$, we denote by $\theta_{p}$ the smallest integer $k$ such that  $| \mathcal{H}_{k}| \geq p^2$ and $\mathcal{H}_{k}$  has a hole of perimeter exactly $p$. Clearly, $\theta_{p}$ is a $ (\mathcal{F}_{n})$ stopping time,  and it follows from an adaptation of Corollary \ref{cor:peelinggenerallaw} that, conditionally on $ \theta_{p} < \infty$, the triangulation filling-in a  hole of perimeter exactly $p$ of $ \mathcal{H}_{\theta_{p}}$ inside $ \mathbf{t}$ is distributed as a Boltzmann triangulation of the $p$-gon. The lemma is thus proved provided that we check that 
$ \liminf_{p \to \infty} \mathbb{P}_{>p^{2}}( \mathcal{E}_{p}) > 0$. Now consider the simpler event
$$ \mathcal{V}_{  \sqrt{p}} = \{ |B_{ \sqrt{p}}( \mathbf{t})| \geq p^2 \mbox{ and }  B_{ \sqrt{p}}( \mathbf{t})\mbox{ has a hole of perimeter larger than }  p\}.$$
We claim that  there exists $c>0$ such that
\begin{equation}
\label{eq:claim} \mathbb{P}_{>p^{2}}( \mathcal{E}_{p}) \geq c \,\mathbb{P}_{>p^{2}}( \mathcal{V}_{ \sqrt{p}}).\end{equation}
Indeed, by Proposition \ref{prop:scalingllcycle}, there exists $c>0$ such that for every $p$ sufficiently large, for every $p/2 \leq k \leq p$, the probability under  $\mathbb{P}^{(k)}$ that the perimeter of the locally largest cycle becomes greater than or equal to $p$ before becoming equal to $0$ is at least $c$.

Also, note that if  $p/2 \leq k \leq p$ and if the perimeter of the locally largest cycle of a Boltzmann triangulation of the $k$-gon takes a value greater that $p$ at a certain time, it has to take the value $p$ at some earler time because it can only increase by $1$.  Now, under $\mathbb{P}_{>p^{2}}$ and  on the event $\mathcal{V}_{ \sqrt{p}}$, select a cycle of $B_{ \sqrt{p}}( \mathbf{t})$ of perimeter larger than $ p$ and  follow the evolution of the perimeter of its associated locally largest cycle (as in Section \ref{sec:llc}).
Since this perimeter takes a value between $p/2$ and $p$ the first time it  becomes smaller than $p$, the previous discussions imply that {$\mathbb{P}_{>p^{2}}( \mathcal{E}_{ {p}}) \geq c \mathbb{P}_{>p^{2}}( \mathcal{V}_{\sqrt{p}})$}. This establishes \eqref{eq:claim}.

 It thus remains to prove that  $\liminf_{p \to \infty} \mathbb{P}_{>p^{2}}(\mathcal{V}_{ \sqrt{p}}) > 0$.
We proceed as in the proof of Lemma \ref{lem:volumedistance>n}: for every $c_{1}>1$, we have $ \mathbb{P}_{>p^2}(\mathcal{V}_{ \sqrt{p}}) \geq \mathbb{P}_{>p^2}(|\mathbf{t}| \geq c_{1} \, p^2) \inf_{n \geq c_{1} p^2} \mathbb{P}(T_{n} \in \mathcal{V}_ {\sqrt{p}})$, where we recall that $T_{n}$ is a uniform triangulation of the sphere with $n$ vertices. Since $\mathbb{P}_{>p^2}(|\mathbf{t}| \geq c_{1} \, p^2) \sim c_{1}^{-3/2}$ as $p \to \infty$, our goal is achieved if we can find $c_{1}>1$ large enough and $c_{2}>0$ so that 
 \begin{eqnarray} \label{eq:goalfin} \forall n \geq c_{1} p^2,\quad  \mathbb{P}(T_{n} \in \mathcal{V}_{ \sqrt{p}}) > c_{2}.  \end{eqnarray}
To prove this last claim, we use absolute continuity relations between finite triangulations and the UIPT: It follows from  \cite{CLGmodif} (see \cite[Proposition 7]{CLGplane} for similar estimates in the case of quadrangulations) that for any $   \varepsilon>0$ we can find $d( \varepsilon)>0$ such that for all $r \geq 0$ and all $n \geq d( \varepsilon)\, r^4$ we have 
$$ \mathrm{d_{TV}}( B_{r}(T_{n}), B_{r}(T_{\infty}))\leq \varepsilon,$$ where $ \mathrm{d_{TV}}$ is the total variation distance. 
On the other hand,  $\liminf_{p \rightarrow \infty} \P(|B_{ \sqrt{p}}(T_{\infty})| \geq p^2)>0$  and with probability bounded away from $0$ as $p \rightarrow \infty$, the cycle of $B_{ \sqrt{p}}( T_{\infty})$ separating the root from infinity has perimeter larger than $p$ (this can be seen by using results of \cite{CLGpeeling,CLGplane}, and we leave details to the reader). Hence,  $c'_{2}=  \liminf_{p \rightarrow \infty }\mathbb{P}(T_{\infty} \in \mathcal{V}_{ \sqrt{p}}) >0$. Therefore, taking $c_{1} = d( \varepsilon)$ with  $\varepsilon < c_{2}'/2$ and $ r= \sqrt{p}$ the previous discussion entails \eqref{eq:goalfin}. This completes the proof of the lemma.\end{proof}

It is now a simple matter to combine Lemma \ref{lem:volumedistance>n} and Lemma \ref{lem:coupling} to get that 

\begin{corollary}For any $ \varepsilon>0$, we have
 $$\lim_{ \delta \to 0} \sup_{n \geq 0}  \mathbb{P}\big( \exists x \in \mathrm{V}(T^{(p)}) : |B_{ \varepsilon  \sqrt{p}}(x)| \leq \delta p^2\big) =0.$$
 \end{corollary}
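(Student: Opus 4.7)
The plan is to combine the embedding of Lemma \ref{lem:coupling} (which, conditionally on the event $\mathcal{E}_p$ of probability bounded below uniformly for $p$ large, realizes $T^{(p)}$ as a sub-triangulation of a Boltzmann triangulation of the sphere $T_{>p^{2}}$ with at least $p^{2}$ vertices) with the uniform ball-volume lower bound of Lemma \ref{lem:volumedistance>n} applied to $T_{>p^{2}}$. Since this coupling preserves the marginal law of $T^{(p)}$ and $\liminf_{p\to\infty}\mathbb{P}(\mathcal{E}_p)>0$, up to an absolute multiplicative constant it suffices to control the bad event on $\{T^{(p)}\subseteq T_{>p^{2}}\}$; the remaining bounded values of $p$ are handled by taking $\delta$ small enough. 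Throughout the argument I would then work under the joint coupling restricted to this event, so that $V(T^{(p)})\subseteq V(T_{>p^{2}})$ and $d_{T_{>p^{2}}}\leq d_{T^{(p)}}$ on $V(T^{(p)})$.

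The key geometric point is as follows. Let $\mathscr{C}\coloneqq \partial T^{(p)}$ denote the boundary cycle of $T^{(p)}$ inside $T_{>p^{2}}$. Since all $T_{>p^{2}}$-neighbors of any vertex $x\in V(T^{(p)})\setminus\mathscr{C}$ lie in $V(T^{(p)})$, every $T_{>p^{2}}$-path starting at $x$ remains inside $T^{(p)}$ until it first hits $\mathscr{C}$. In particular, if $d_{T^{(p)}}(x,\mathscr{C})>r$ then no $T_{>p^{2}}$-path of length $\leq r$ from $x$ can exit $T^{(p)}$, which yields the crucial identity $B^{T^{(p)}}_{r}(x)=B^{T_{>p^{2}}}_{r}(x)$. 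Combined with Lemma \ref{lem:volumedistance>n} applied with $n=|T_{>p^{2}}|\geq p^{2}$ and with $\varepsilon$ replaced by $\varepsilon/2$ (so that $\varepsilon\sqrt{p}/2\leq(\varepsilon/2)\,n^{1/4}$), this immediately handles the \emph{deep} vertices, i.e. those $x$ with $d_{T^{(p)}}(x,\mathscr{C})>\varepsilon\sqrt{p}/2$: for each such $x$ one has $|B^{T^{(p)}}_{\varepsilon\sqrt{p}}(x)| \geq |B^{T^{(p)}}_{\varepsilon\sqrt{p}/2}(x)| = |B^{T_{>p^{2}}}_{\varepsilon\sqrt{p}/2}(x)|\geq \delta p^{2}$ with probability tending to one as $\delta\to 0$, uniformly in $p$.

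The main obstacle will be to treat the \emph{shallow} vertices $x$, those with $d_{T^{(p)}}(x,\mathscr{C})\leq \varepsilon\sqrt{p}/2$, for which the ball identification breaks down because $T_{>p^{2}}$-geodesics from $x$ may take shortcuts through the exterior of $T^{(p)}$. The strategy is to associate to each such shallow $x$ a deep companion vertex $y\in V(T^{(p)})$ with $d_{T^{(p)}}(x,y)\leq \varepsilon\sqrt{p}/2$, so that $B^{T^{(p)}}_{\varepsilon\sqrt{p}}(x)\supseteq B^{T^{(p)}}_{\varepsilon\sqrt{p}/2}(y)$ and the previous paragraph applies at $y$. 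The existence of such a $y$, uniformly in $p$ and $x$, is the genuine technical point; it will be obtained with high probability from Proposition \ref{prop:hauteur}, which ensures that cycles of $T^{(p)}$ at heights of order $\varepsilon\sqrt{p}$ are nonempty and of macroscopic perimeter, together with a direct peeling-exploration argument in the branching peeling by layers of Section \ref{sec:branchedpeelinglayers} showing that every vertex of $T^{(p)}$ is within $T^{(p)}$-graph distance $\varepsilon\sqrt{p}/2$ of some vertex of a cycle at height roughly $\varepsilon\sqrt{p}$, whose vertices are automatically deep.
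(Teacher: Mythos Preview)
Your overall strategy matches the paper's: the combination of the coupling Lemma~\ref{lem:coupling} with the ball--volume estimate of Lemma~\ref{lem:volumedistance>n} is exactly what the paper has in mind, and your treatment of the \emph{deep} case is correct. On the event $\mathcal{E}_p$, for any $x$ with $d_{T^{(p)}}(x,\mathscr{C})>r$, every $T_{>p^2}$-path from $x$ of length at most $r$ stays in $T^{(p)}$, so $B^{T^{(p)}}_{r}(x)=B^{T_{>p^2}}_{r}(x)$, and Lemma~\ref{lem:volumedistance>n} gives the uniform lower bound on its volume. Since $\liminf_p\mathbb{P}(\mathcal{E}_p)>0$ and the marginal of the embedded triangulation is $\mathbb{P}^{(p)}$, this handles the deep vertices.

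The gap is in your treatment of the \emph{shallow} vertices. First, your constants are inconsistent with the triangle inequality: if $x$ lies on $\mathscr{C}$ (so $h(x)=0$) and $y$ is deep in your sense ($h(y)>\varepsilon\sqrt{p}/2$), then $d_{T^{(p)}}(x,y)\geq h(y)-h(x)>\varepsilon\sqrt{p}/2$, so no such companion $y$ can be within distance $\varepsilon\sqrt{p}/2$ of $x$. More seriously, even after adjusting the thresholds (say, calling $y$ deep when $h(y)>\varepsilon\sqrt{p}/4$ and asking only for $d(x,y)\leq \varepsilon\sqrt{p}/2$), the existence of a deep companion for \emph{every} shallow vertex does not follow from Proposition~\ref{prop:hauteur}. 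That proposition controls the perimeter of the single \emph{locally largest} cycle at each height; it says nothing about whether a given boundary vertex $x$ can reach height $\varepsilon\sqrt{p}/4$ within $T^{(p)}$-distance $\varepsilon\sqrt{p}/2$. The claim you would need---that from every point of $T^{(p)}$ one can go up by $\varepsilon\sqrt{p}/4$ in height along a short path---is precisely a ``no thin pockets near the boundary'' statement of the same flavour as Theorem~\ref{thm:no tentacles}, so invoking it here risks circularity.

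Fortunately, for the intended application~\eqref{eq:goaltheoremtentacles} the shallow case is irrelevant. If $\mathsf{Height}(\mathrm{T}_i^{<\varepsilon p})\geq \delta\sqrt{p}$ for some $i$, pick $x\in \mathrm{T}_i^{<\varepsilon p}$ at $\mathrm{T}_i^{<\varepsilon p}$-height at least $\delta\sqrt{p}$. Since any $T^{(p)}$-path from $x$ to $\mathscr{C}$ must cross $\mathsf{C}_i^{<\varepsilon p}$, one has $d_{T^{(p)}}(x,\mathscr{C})\geq d_{T^{(p)}}(x,\mathsf{C}_i^{<\varepsilon p})\geq \delta\sqrt{p}$, so $x$ is automatically deep. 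Your deep-vertex argument then yields $|\mathrm{T}_i^{<\varepsilon p}|\geq |B^{T^{(p)}}_{\delta\sqrt{p}/2}(x)|=|B^{T_{>p^2}}_{\delta\sqrt{p}/2}(x)|$, which is large by Lemma~\ref{lem:volumedistance>n} and contradicts Proposition~\ref{prop:volumesmall}. In other words, the corollary as stated (for all vertices) may require more work than the paper's ``simple matter'' suggests, but only the deep-vertex version---which you did prove---is needed downstream.
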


By combining Theorem \ref{thm:main} with Theorem \ref{thm:no tentacles}, we can relate the scaling limit of the total height of a Boltzmann triangulation with boundary with the extinction time $\inf\{ t \geq 0 : \mathbf{X}(t) = \varnothing\}$ of the growth-fragmentation process $ \mathbf{X}$ (which is known to be almost surely finite\cite[Corollary 3]{BeGF}). At this point of the paper, we leave the details to the reader.

\begin{corollary} We have the following convergence in distribution as $p \to \infty$
$$ p^{-1/2}\cdot \mathsf{Height}( T^{(p)}) \quad  \xrightarrow[p\to\infty]{(d)} \quad  \frac{\aone}{2 \tone}  \cdot \inf\{ t \geq 0 : \mathbf{X}(t) = \varnothing\}.$$
\end{corollary}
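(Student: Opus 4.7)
My plan is to combine Theorem \ref{thm:main} (scaling limit of the cycle-length process) with Theorem \ref{thm:no tentacles} (metric approximation by the cutoff). First observe that $\mathsf{Height}(T^{(p)})$ differs by at most an additive constant from the extinction time $\sigma^{(p)}\coloneqq \inf\{r\geq 0:\mathbf{L}^{(p)}(r)=\varnothing\}$ of the cycle process (indeed $B_r(T^{(p)})=T^{(p)}$ as soon as $r\geq \mathsf{Height}(T^{(p)})+1$), so it suffices to prove that $p^{-1/2}\sigma^{(p)}$ converges in distribution to $\frac{\aone}{2\tone}T_{\mathbf{X}}$, where $T_{\mathbf{X}}\coloneqq\inf\{t\geq 0:\mathbf{X}(t)=\varnothing\}$ is a.s. finite by \cite[Corollary 3]{BeGF}.

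For the lower bound, introduce the first-passage times $\tau^{(p)}_{\varepsilon}\coloneqq\inf\{r\geq 0: L_1^{(p)}(r)\leq \varepsilon p\}$ and $\tau_{\varepsilon}\coloneqq\inf\{t\geq 0: X_1(t)\leq \varepsilon\}$. Trivially $\tau^{(p)}_{\varepsilon}\leq\sigma^{(p)}$. For Lebesgue-almost every $\varepsilon>0$, the functional $\mathbf{x}\mapsto\inf\{t: x_1(t)\leq\varepsilon\}$ is a.s. continuous at $\mathbf{X}$ in the Skorokhod $J_1$ topology of $\ell^{\downarrow}_3$, since the largest-mass coordinate $X_1$ a.s. does not tangentially touch the level $\varepsilon$ (this uses that the driving L\'evy process $\xi$ has infinite variation with only negative jumps, combined with a standard Fubini argument). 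Consequently, Theorem \ref{thm:main} yields $p^{-1/2}\tau^{(p)}_{\varepsilon}\xrightarrow{(d)}\frac{\aone}{2\tone}\tau_{\varepsilon}$, and since $\tau_{\varepsilon}\nearrow T_{\mathbf{X}}$ almost surely as $\varepsilon\downarrow 0$, we obtain $\liminf_{p}p^{-1/2}\sigma^{(p)}\geq \frac{\aone}{2\tone}T_{\mathbf{X}}$ in distribution.

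For the upper bound, for each vertex $v\in T^{(p)}$ write $d(v,\partial T^{(p)})\leq d(v,\mathsf{Cut}(T^{(p)},\varepsilon p))+\max_{u\in\mathsf{Cut}}d(u,\partial T^{(p)})$, so that $\mathsf{Height}(T^{(p)})\leq \mathrm{d_H}(T^{(p)},\mathsf{Cut}(T^{(p)},\varepsilon p))+\mathsf{Height}^{\mathsf{Cut}}_p$ with $\mathsf{Height}^{\mathsf{Cut}}_p$ the maximal height in $T^{(p)}$ of a vertex of $\mathsf{Cut}(T^{(p)},\varepsilon p)$. The first term is $o_P(\sqrt p)$ uniformly in $p$ as $\varepsilon\to 0$ by Theorem \ref{thm:no tentacles}. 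For the second, I argue that the frozen peeling by layers explores strictly less of $T^{(p)}$ than the unfrozen one, so that all vertices of $\mathsf{Cut}(T^{(p)},\varepsilon p)$ lie in the ball $B_{\tau^{(p)}_{\varepsilon}+1}(T^{(p)})$: once the unfrozen exploration reaches layer $\tau^{(p)}_{\varepsilon}$, every remaining cycle has perimeter $\leq\varepsilon p$ and so has been frozen in the cutoff algorithm, which has therefore terminated. This gives $\mathsf{Height}^{\mathsf{Cut}}_p\leq \tau^{(p)}_{\varepsilon}+O(1)$. Combining and dividing by $\sqrt p$, then letting $p\to\infty$, $\varepsilon\downarrow 0$ along continuity points and invoking $\tau_{\varepsilon}\nearrow T_{\mathbf{X}}$, we get $\limsup_p p^{-1/2}\mathsf{Height}(T^{(p)})\leq \frac{\aone}{2\tone}T_{\mathbf{X}}$ in distribution.

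The main obstacle is the Skorokhod-continuity of the first-passage functional $\tau_{\varepsilon}$ at $\mathbf{X}$: the $J_1$ topology on $\ell^{\downarrow}_3$ is not compatible with first-passage times in general, so one has to restrict to a.e. $\varepsilon>0$ and rule out tangential crossings of the largest coordinate; this is where the unbounded-variation spectrally-negative nature of the underlying L\'evy process $\xi$ plays the decisive role. A secondary technical step is the inclusion $\mathsf{Cut}(T^{(p)},\varepsilon p)\subseteq B_{\tau^{(p)}_{\varepsilon}+1}(T^{(p)})$, which follows from a careful comparison between the frozen and unfrozen peeling by layers but needs a short combinatorial verification.
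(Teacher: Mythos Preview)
Your approach is precisely the one the paper gestures at---the paper itself gives no proof, merely stating that the result follows by combining Theorem~\ref{thm:main} with Theorem~\ref{thm:no tentacles} and leaving details to the reader. Your decomposition into a lower bound via first-passage times $\tau^{(p)}_\varepsilon$ and an upper bound via the Hausdorff estimate plus a height bound on $\mathsf{Cut}$ is the natural way to make this precise, and it is correct.

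Two remarks on the technical points you flag. First, the combinatorial inclusion $\mathsf{Cut}(T^{(p)},\varepsilon p)\subseteq B_{\tau^{(p)}_\varepsilon+1}(T^{(p)})$ does hold, and the key fact is slightly stronger than ``the frozen exploration explores less'': at any step of the frozen peeling by layers where all \emph{active} cycles have every vertex at height $r$, each such active cycle is genuinely a cycle of $B_r(T^{(p)})$. Indeed, if $\mathscr{C}$ is active with all vertices at height $r$ and bounds the hole $R$, then every interior vertex of $R$ has height $\geq r+1$ (any path to $\partial T^{(p)}$ must cross $\mathscr{C}$), so $R$ lies in a component $K$ of $T^{(p)}\setminus B_r$; conversely $K\subseteq T^{(p)}\setminus E\subseteq$ (union of holes of $E$) and connectedness forces $K\subseteq R$. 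Thus $R=K$ and $\mathscr{C}=\partial K$ is a cycle of $B_r$. Since at $r=\tau^{(p)}_\varepsilon$ every cycle of $B_r$ has perimeter $\leq\varepsilon p$, no active cycle can survive to that layer (up to the strict/non-strict boundary issue, which is harmless for a.e.\ $\varepsilon$).

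Second, for the $J_1$-continuity of $\tau_\varepsilon$: your reduction to the first coordinate $X_1$ is fine since $\|\cdot\|_\infty\leq\|\cdot\|_{\ell^3}$, and the Fubini argument over $\varepsilon$ goes through because the set of levels at which $X_1$ can touch without immediately going below has Lebesgue measure zero---locally $X_1$ coincides with some cell $\mathcal{X}_u$, a Lamperti transform of $\xi$, and $\xi$ has unbounded variation (its L\'evy measure behaves like $|x|^{-5/2}\,\mathrm{d}x$ near $0$) with no Gaussian part, hence does not creep downward.
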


\subsection{Universality}

Let us end the article with a word on universality. We believe that most of the results established in this work are still valid for more general classes of random planar maps. The only influence of these classes under considerations are the constants  $ \aone$ and $\tone$: in particular Theorem \ref{thm:main} can be adapted to the case of triangulations of type II or quadrangulations (see \cite[Section 6]{CLGpeeling} for the values of the corresponding  new constants). More generally, we expect our results to hold for any regular critical Boltzmann maps as recently discussed in \cite{Bud15}. 

\bibliographystyle{siam}

\begin{thebibliography}{10}

\bibitem{Ald91}
{\sc D.~Aldous}, {\em The continuum random tree. {II}. {A}n overview}, in
  Stochastic analysis ({D}urham, 1990), vol.~167 of London Math. Soc. Lecture
  Note Ser., Cambridge Univ. Press, Cambridge, 1991, pp.~23--70.

\bibitem{Ang05}
{\sc O.~Angel}, {\em Scaling of percolation on infinite planar maps, {I}},
  Preprint available on arxiv, arXiv:0501006.

\bibitem{Ang03}
\leavevmode\vrule height 2pt depth -1.6pt width 23pt, {\em Growth and
  percolation on the uniform infinite planar triangulation}, Geom. Funct.
  Anal., 13 (2003), pp.~935--974.

\bibitem{ACpercopeel}
{\sc O.~Angel and N.~Curien}, {\em Percolations on random maps {I}:
  {H}alf-plane models}, Ann. Inst. Henri Poincar\'e Probab. Stat., 51 (2015),
  pp.~405--431.

\bibitem{AR13}
{\sc O.~Angel and G.~Ray}, {\em Classification of half-planar maps}, Ann.
  Probab., 43 (2015), pp.~1315--1349.

\bibitem{AS03}
{\sc O.~Angel and O.~Schramm}, {\em Uniform infinite planar triangulation},
  Comm. Math. Phys., 241 (2003), pp.~191--213.

\bibitem{BCsubdiffusive}
{\sc I.~Benjamini and N.~Curien}, {\em Simple random walk on the uniform
  infinite planar quadrangulation: Subdiffusivity \emph{via} pioneer points},
  Geom. Funct. Anal., 23 (2013), pp.~501--531.

\bibitem{BeGF}
{\sc J.~Bertoin}, {\em Markovian growth-fragmentation processes}, Bernoulli (to
  appear).

\bibitem{BeSSF}
\leavevmode\vrule height 2pt depth -1.6pt width 23pt, {\em Self-similar
  fragmentations}, Ann. Inst. H. Poincar\'e Probab. Statist., 38 (2002),
  pp.~319--340.

\bibitem{BeCF}
\leavevmode\vrule height 2pt depth -1.6pt width 23pt, {\em Compensated
  fragmentation processes and limits of dilated fragmentations}, Ann. Probab.,
  44 (2016), pp.~1254--1284.

\bibitem{BK14}
{\sc J.~Bertoin and I.~Kortchemski}, {\em Self-similar scaling limits of
  {M}arkov chains on the positive integers}, Ann. Appl. Probab., 26 (2016),
  pp.~2556--2595.

\bibitem{BY05}
{\sc J.~Bertoin and M.~Yor}, {\em Exponential functionals of {L}\'evy
  processes}, Probab. Surv., 2 (2005), pp.~191--212.

\bibitem{Bet11}
{\sc J.~Bettinelli}, {\em Scaling limit of random planar quadrangulations with
  a boundary}, Ann. Inst. Henri Poincar\'e Probab. Stat., 51 (2015),
  pp.~432--477.

\bibitem{BM15}
{\sc J.~Bettinelli and G.~Miermont}, {\em {C}ompact {B}rownian surfaces {I}.
  {B}rownian disks}, Preprint available on arxiv, arXiv:1507.08776.

\bibitem{Bud15}
{\sc T.~Budd}, {\em The peeling process of infinite {B}oltzmann planar maps},
  Electron. J. Combin., 23 (2016), pp.~Paper 1.28, 37.

\bibitem{Bud16}
{\sc T.~Budzinski}, {\em The hyperbolic {B}rownian plan}, Preprint available on
  arXiv, arXiv:1604.06622.

\bibitem{CurKPZ}
{\sc N.~Curien}, {\em A glimpse of the conformal structure of random planar
  maps}, Comm. Math. Phys., 333 (2015), pp.~1417--1463.

\bibitem{CurPSHIT}
\leavevmode\vrule height 2pt depth -1.6pt width 23pt, {\em Planar stochastic
  hyperbolic triangulations}, Probab. Theory Related Fields, 165 (2016),
  pp.~509--540.

\bibitem{CK15}
{\sc N.~Curien and I.~Kortchemski}, {\em Percolation on random triangulations
  and stable looptrees}, Probab. Theory Related Fields, 163 (2015),
  pp.~303--337.

\bibitem{CLGmodif}
{\sc N.~Curien and J.-F. Le~Gall}, {\em First-passage percolation and local
  perturbations on random planar maps}, In preparation.

\bibitem{CLGplane}
\leavevmode\vrule height 2pt depth -1.6pt width 23pt, {\em The {B}rownian
  plane}, J. Theoret. Probab., 27 (2014), pp.~1249--1291.

\bibitem{CLGpeeling}
\leavevmode\vrule height 2pt depth -1.6pt width 23pt, {\em Scaling limits for
  the peeling process on random maps}, Preprint available on arxiv,
  arXiv:1412.5509,  (2014).

\bibitem{CLGHull}
{\sc N.~Curien and J.-F. Le~Gall}, {\em The hull process of the {B}rownian
  plane}, Probab. Theory Related Fields, 166 (2016), pp.~187--231.

\bibitem{CLGMcactus}
{\sc N.~Curien, J.-F. Le~Gall, and G.~Miermont}, {\em The {B}rownian cactus
  {I}. {S}caling limits of discrete cactuses}, Ann. Inst. H. Poincar\'e Probab.
  Statist., 49 (2013), pp.~340--373.

\bibitem{EK86}
{\sc S.~N. Ethier and T.~G. Kurtz}, {\em Markov processes}, Wiley Series in
  Probability and Mathematical Statistics: Probability and Mathematical
  Statistics, John Wiley \& Sons, Inc., New York, 1986.
\newblock Characterization and convergence.

\bibitem{JS03}
{\sc J.~Jacod and A.~N. Shiryaev}, {\em Limit theorems for stochastic
  processes}, vol.~288 of Grundlehren der Mathematischen Wissenschaften
  [Fundamental Principles of Mathematical Sciences], Springer-Verlag, Berlin,
  second~ed., 2003.

\bibitem{Kri04}
{\sc M.~Krikun}, {\em A uniformly distributed infinite planar triangulation and
  a related branching process}, Zap. Nauchn. Sem. S.-Peterburg. Otdel. Mat.
  Inst. Steklov. (POMI), 307 (2004), pp.~141--174, 282--283.

\bibitem{Kri07}
\leavevmode\vrule height 2pt depth -1.6pt width 23pt, {\em Explicit enumeration
  of triangulations with multiple boundaries}, Electron. J. Combin., 14 (2007),
  pp.~Research Paper 61, 14 pp. (electronic).

\bibitem{Lam72}
{\sc J.~Lamperti}, {\em Semi-stable {M}arkov processes. {I}}, Z.
  Wahrscheinlichkeitstheorie und Verw. Gebiete, 22 (1972), pp.~205--225.

\bibitem{LeGallICM}
{\sc J.-F. Le~Gall}, {\em Random geometry on the sphere}, Proceedings of the
  ICM 2014.

\bibitem{LG11}
\leavevmode\vrule height 2pt depth -1.6pt width 23pt, {\em Uniqueness and
  universality of the {B}rownian map}, Ann. Probab., 41 (2013), pp.~2880--2960.

\bibitem{LL}
{\sc E.~H. Lieb and M.~Loss}, {\em Analysis}, vol.~14 of Graduate Studies in
  Mathematics, American Mathematical Society, Providence, RI, second~ed., 2001.

\bibitem{LPP}
{\sc R.~Lyons, R.~Pemantle, and Y.~Peres}, {\em Conceptual proofs of {$L\log
  L$} criteria for mean behavior of branching processes}, Ann. Probab., 23
  (1995), pp.~1125--1138.

\bibitem{MN13}
{\sc L.~M{\'e}nard and P.~Nolin}, {\em Percolation on uniform infinite planar
  maps}, Electron. J. Probab., 19 (2014), pp.~1--27.

\bibitem{MiFST}
{\sc G.~Miermont}, {\em Self-similar fragmentations derived from the stable
  tree. {I}. {S}plitting at heights}, Probab. Theory Related Fields, 127
  (2003), pp.~423--454.

\bibitem{Mie11}
\leavevmode\vrule height 2pt depth -1.6pt width 23pt, {\em The {B}rownian map
  is the scaling limit of uniform random plane quadrangulations}, Acta Math.,
  210 (2013), pp.~319--401.

\bibitem{MieStFlour}
\leavevmode\vrule height 2pt depth -1.6pt width 23pt, {\em Aspects of random
  maps}, Saint-Flour lecture notes,  (Preliminary version).

\bibitem{MS15}
{\sc J.~Miller and S.~Sheffield}, {\em An axiomatic characterization of the
  {B}rownian map}, arXiv:1506.03806.

\bibitem{MS15+}
\leavevmode\vrule height 2pt depth -1.6pt width 23pt, {\em Liouville quantum
  gravity and the brownian map {I}: The {QLE}(8/3,0) metric}, Preprint
  available on arxiv, arXiv:1507.00719.

\bibitem{MS13}
{\sc J.~Miller and S.~Sheffield}, {\em Quantum {L}oewner evolution},  (2013).

\bibitem{Sch98}
{\sc G.~Schaeffer}, {\em Conjugaison d'arbres et cartes combinatoires
  al{\'e}atoires. {PhD} thesis},  (1998).

\bibitem{Ste14}
{\sc R.~Stephenson}, {\em Local convergence of large critical multi-type
  {G}alton-{W}atson trees and applications to random maps}, arXiv:1412.6911.

\bibitem{UB}
{\sc G.~Uribe~Bravo}, {\em The falling apart of the tagged fragment and the
  asymptotic disintegration of the {B}rownian height fragmentation}, Ann. Inst.
  Henri Poincar\'e Probab. Stat., 45 (2009), pp.~1130--1149.

\bibitem{Wat95}
{\sc Y.~Watabiki}, {\em Construction of non-critical string field theory by
  transfer matrix formalism in dynamical triangulation}, Nuclear Phys. B, 441
  (1995), pp.~119--163.

\end{thebibliography}

\end{document}